\numberwithin{equation}{section}
\begin{document}

\newcommand{\nc}{\newcommand}
\def\PP#1#2#3{{\mathrm{Pres}}^{#1}_{#2}{#3}\setcounter{equation}{0}}
\def\mr#1{{{\mathrm{#1}}}\setcounter{equation}{0}}
\def\mc#1{{{\mathcal{#1}}}\setcounter{equation}{0}}
\def\mb#1{{{\mathbb{#1}}}\setcounter{equation}{0}}
\def\Mcc{\mc{C}}
\def\Mbe{\mb{E}}
\def\Mcp{\mc{P}}
\def\Mcg{\mc{G}}
\def\fbzh{\mc{C}(-,\mc{P}(\xi))\text{-exact}}
\def\extri{(\mc{C},\mb{E},\mathfrak{s})}
\def\Gproj{\xi\text{-}\mc{G}\text{projective}}
\def\Ginj{\xi\text{-}\mc{G}\text{injective}}
\def\GP{\mc{G}\mc{P}(\xi)}
\def\GI{\mc{G}\mc{I}(\xi)}
\def\P{\mc{P}(\xi)}
\def\Gpd{\xi\text{-}\mc{G}\text{pd}}
\def\Extri{\mb{E}\text{-triangle}}
\def\SGP{\mc{SGP}(\xi)}
\def\nSGP{\text{$n$}\text{-}\mc{SGP}(\xi)}
\def\mSGP{\text{$m$}\text{-}\mc{SGP}(\xi)}
\def\ext{\xi \text{xt}_{\xi}}
\def\extgp{\xi \text{xt}_{\mc{GP}(\xi)}}
\def\extgi{\xi \text{xt}_{\mc{GI}(\xi)}}
\def\gext{\mc{G}\xi\text{xt}_{\xi}}
\def\nGpd{n\text{-}\xi\text{-}\mc{SG}\text{pd}}
\def\ext{\xi \text{xt}_{\xi}}
\newtheorem{defn}{\bf Definition}[section]
\newtheorem{cor}[defn]{\bf Corollary}
\newtheorem{prop}[defn]{\bf Proposition}
\newtheorem{thm}[defn]{\bf Theorem}
\newtheorem{lem}[defn]{\bf Lemma}
\newtheorem{rem}[defn]{\bf Remark}
\newtheorem{exam}[defn]{\bf Example}
\newtheorem{fact}[defn]{\bf Fact}
\newtheorem{cond}[defn]{\bf Condition}
\def\Pf#1{{\noindent\bf Proof}.\setcounter{equation}{0}}
\def\>#1{{ $\Rightarrow$ }\setcounter{equation}{0}}
\def\<>#1{{ $\Leftrightarrow$ }\setcounter{equation}{0}}
\def\bskip#1{{ \vskip 20pt }\setcounter{equation}{0}}
\def\sskip#1{{ \vskip 5pt }\setcounter{equation}{0}}
\def\mskip#1{{ \vskip 10pt }\setcounter{equation}{0}}
\def\bg#1{\begin{#1}\setcounter{equation}{0}}
\def\ed#1{\end{#1}\setcounter{equation}{0}}




\title{\bf Gorenstein Derived Functors for Extriangulated Categories}

\smallskip
\author{\small Zhenggang He\\
\small E-mail:~zhenggang_he@163.com\\
\small Institute of Mathematics, School of Mathematics Sciences\\
\small Nanjing Normal University, Nanjing \rm210023 China}
\date{}
\maketitle
\baselineskip 15pt
%
%
%
\vskip 10pt%
\noindent {\bf Abstract}:
Let $(\Mcc,\Mbe,\mathfrak{s})$ be an extriangulated category with a proper class $\xi$ of $\Mbe$-triangles.
In this paper, we study Gorenstein  derived functors  for extriangulated categories. More precisely, we first
introduce the notion of the proper $\Gproj$ resolution  for any object in $\mathcal{C}$ and define the functors $\extgp$ and $\extgi$. Under some assumptions, we 
 give  some  equivalent  characterizations  for  any  object with
   finite $\Gproj$ dimension. Next we get some nice results by 
   using derived functors. As an application, our main results generalize their work 
   by  Ren-Liu. Moreover, our proof is not far from the usual module categories or triangulated
 categories.

\mskip\


\noindent {\bf MSC2010}: 18A05; 18E10; 18G20


\noindent {\bf Keywords}:~Extriangulated categories; Gorenstein Objects; Gorenstein  derived functors.

%
%
\vskip 30pt

\section{Introduction}
\quad~The notion of extriangulated categories was introduced by Nakaoka and Palu in \cite{HY} as a
simultaneous generalization of exact categories and triangulated categories.  Hence many results hold on exact categories and triangulated
categories can be unified in the same framework.
Let $(\Mcc,\Mbe, \mathfrak{s})$ be an extriangulated category.  Hu, Zhang and Zhou 
\cite{JDP} studied a relative homological
algebra in $\Mcc$ which parallels the relative homological algebra in a triangulated category. By
specifying a class of $\Mbe$-triangles, which is called a proper class $\xi$ of $\Mbe$-triangles, they introduced
$\xi$-$\mathcal{G}$projective dimensions and $\xi$-$\mathcal{G}$injective dimensions and discussed their properties.  He  \cite{He} introduced the
notion of the$\xi$-$\mathcal{G}$projectiHeve resolution, and study the relation between $\xi$-projective resolution
and $\xi$-$\mathcal{G}$projective resolution for any object $A\in\mathcal{C}$.

In the category of modules, Holm \cite{H1} introduced the Gorenstein derived 
functors ${\text{Ext}_{\mathcal{GP}}^n}(-,-)$ and  ${\text{Ext}_{\mathcal{GI}}^n}(-,-)$ 
of $\text{Hom}_R(-,-)$, and proved that the functorial isomorphisms  
$${\text{Ext}_{\mathcal{GP}}^n}(M,N)\backsimeq {\text{Ext}_{\GI}^n}(M,N) $$
 hold over arbitrary rings $R$, provided that $\mathcal{G}\text{pd}_R M<\infty $ and
 $\mathcal{G}\text{id}_R N<\infty$. This result generalized that of Enochs and Jenda.

Inspired by Holm, Ren and Liu \cite{WZ} defined $\xi$-Gorenstein derived functors $\gext^n(-,-)$ 
of $\text{Hom}_{\Mcc}(-,-)$ in the triangulated categories, and they further studied Gorenstein
 homological dimensions for triangulated categories with $\xi$-Gorenstein derived functors,
 and proved that the functorial isomorphisms  
 $${\xi\text{xt}_{\GP}^n}(M,N)\backsimeq {\xi\text{xt}_{\GI}^n}(M,N) $$
 with $\xi$-$\mathcal{G}\text{pd} M<\infty $ and
$\xi$-$\mathcal{G}\text{id} N<\infty$.

What is more, Hu, Zhang and Zhou\cite{JZP} gave some characterizations of $\xi$-$\mathcal{G}$projective dimension by using derived functors in $\Mcc$. Let $\P$ (resp. $\mathcal{I}(\xi)$) be a generating (resp.cogenerating) subcategory of $\mathcal{C}$. They proved that the following equality holds under
some assumptions:
$$\text{sup}\left\{\Gpd M\  |\  \text{for any}\  M \in \mathcal{C}\right\} = \text{sup}\left\{\xi\text{-}\mathcal{G}\text{id} \ M \ |\  \text{for any} M \in \mathcal{C}\right\},$$
where $\Gpd M$ (resp. $\xi\text{-}\mathcal{G}\text{id} M $) denotes $\xi$-$\mathcal{G}$projective (resp. $\xi$-$\mathcal{G}$injective) dimensionn of $M$. And they also pointed  that 

$(1)$ if $M$ is an object in $\mathcal{I}(\xi)$, then $\Gpd M = \xi$-pd$M$;

$(2)$ if $M$ is an object in $\P$, then $\xi$-$\mathcal{G}$id $M = \xi$-id$M$.

This paper is devoted to further study Gorenstein homological properties for extriangulated categories proposed by \cite{JZP}. More precisely, we introduce the notion of the proper $\Gproj$ resolution  for any object in $\mathcal{C}$, then show the Horseshoe Lemma
and Comparison Theorem of the proper $\Gproj$ resolution.
Inspired by \cite{H1} and  \cite{WZ}, we define the functors 
$\extgp$ and $\extgi$, then there exist the long exact sequences 
of them. If $A\in\mathcal{GP}^*(\xi)$, $B\in \mathcal{GI}^*(\xi)$,
 we have proved that $$\extgp^n(A,B)=\extgi^n(A,B).$$ In this situation we get the $\xi$-Gorenstein derived functors $\gext^i(-,-)$ of $\Mcc(-,-)$, where the first argument is for objects in $\mathcal{GP}^*(\xi)$ and the second argument is for objects in $\mathcal{GI}^*(\xi)$). Moreover, We give some equivalent characterizations for any object in $\mathcal{GP}^*(\xi)$ with finite $\Gproj$ dimension and any object in $\mathcal{GI}^*(\xi)$ with finite $\Ginj$ dimension.

Note that module categories and triangulated categories can be viewed as extriangulated 
categories. As an application, our main results generalize their work by Ren-Liu.

\section{Preliminaries}

\quad~ In this section, we briefly recall some basic definitions of extriangulated categories from \cite{HY}. We omit some details here, but the reader can find them in \cite{HY}.

\begin{defn}\citep[Definition 2.1]{HY}
	Suppose that $\mathcal{C}$ is equipped with an additive bifunctor $\mathbb{E}:\mathcal{C}^{op}\times \mathcal{C}\rightarrow \mathbf{Ab}$. For any pair of objects $A, C$ in $\mathcal{C}$, an element $\delta \in\mathbb{E}(C,A)$ is called an $\mathbb{E}$-\emph{extension}. Thus formally, an $\mathbb{E}$-extension is a triplet $(A,\delta,C)$. Since $\mathbb{E}$ is a functor, for any $a\in \mathcal{C}(A, A')$ and $c\in \mathcal{C}(C, C)$, we have $\mathbb{E}$-extensions $\mathbb{E}(C, a)(\delta)\in \mathbb{E}(C, A') $ and $\mathbb{E}(c,A)(\delta)\in \mathbb{E}(C', A).$ We abbreviately denote them by $a_{*}\delta$ and $c^* \delta$ respectively. In this terminology, we have
	$$ \mathbb{E}(c, a)(\delta)=c^*a_*\delta=a_*c^*\delta$$
	in $\mathbb{E}(C', A')$. For any $ A$, $ C\in \mathcal{C}$, the zero
	element $0\in\mathbb{E}(C, A)$ is called the split $\mathbb{E}$-extension.
\end{defn}

\begin{defn}\citep[Definition 2.7]{HY}
	Let $A$, $C\in \mathcal{C}$ be any pair of objects. Two sequences of morphisms $A\stackrel{x}\longrightarrow B\stackrel{y}\longrightarrow C$ and $A\stackrel{x'}\longrightarrow B'\stackrel{y'}\longrightarrow C$ in $\mathcal{C}$ are said to be \emph{equivalent} if there exists an isomorphism $b\in \mathcal{C}(B,B')$ which makes the following diagram commutative.
	$$
	\xymatrix{
		A\ar@{=}[d]\ar[r]^x &B\ar[d]^b_{\simeq}\ar[r]^y &C\ar@{=}[d]\\
		A\ar[r]^{x'} &B'\ar[r]^{y'} &C}
	$$
\end{defn}	

We denote the equivalence class of $A\stackrel{x}\longrightarrow B\stackrel{y}\longrightarrow C$ by [$A\stackrel{x}\longrightarrow B\stackrel{y}\longrightarrow C$].

\begin{defn}\citep[Definition 2.8]{HY}
	(1) For any $A$, $C\in \mathcal{C}$, we denote as
	$$0=[A \stackrel{\begin{tiny}\begin{bmatrix}
		1 \\
		0
		\end{bmatrix}\end{tiny}}{\longrightarrow} A \oplus C \stackrel{\begin{tiny}\begin{bmatrix}
		0&1
		\end{bmatrix}\end{tiny}}{\longrightarrow} C].$$
	(2) For any $[A\stackrel{x}\longrightarrow B\stackrel{y}\longrightarrow C]$ and $[A'\stackrel{x'}\longrightarrow B'\stackrel{y'}\longrightarrow C']$, we denote as
	$$
	[A\stackrel{x}\longrightarrow B\stackrel{y}\longrightarrow C]\oplus[A'\stackrel{x'}\longrightarrow B'\stackrel{y'}\longrightarrow C']=[A\oplus A'\stackrel{x\oplus x'}\longrightarrow B\oplus B'\stackrel{y\oplus y'}\longrightarrow C\oplus C'].
	$$
\end{defn}

\begin{defn}\citep[Definition 2.9]{HY}
	Let $\mathfrak{s}$ be a correspondence which associates an equivalence class $\mathfrak{s}(\delta)=[A\stackrel{x}{\longrightarrow}B\stackrel{y}{\longrightarrow}C]$ to any $\mathbb{E}$-extension $\delta\in\mathbb{E}(C,A)$ . This $\mathfrak{s}$ is called a  \emph{realization} of $\mathbb{E}$, if for any morphism $(a,c):\delta\rightarrow\delta'$ with $\mathfrak{s}(\delta)=[A\stackrel{x}\longrightarrow B\stackrel{y}\longrightarrow C]$ and $\mathfrak{s}(\delta')=[A'\stackrel{x'}\longrightarrow B'\stackrel{y'}\longrightarrow C']$, there exists $b\in \mathcal{C}$ which makes the following  diagram commutative
	$$\xymatrix{
		& A \ar[d]_-{a} \ar[r]^-{x} & B  \ar[r]^{y}\ar[d]_-{b} & C \ar[d]_-{c}    \\
		&A'\ar[r]^-{x'} & B' \ar[r]^-{y'} & C'.    }
	$$
	In the above situation, we say that the triplet $(a,b,c)$ realizes $(a,b)$.
\end{defn}

\begin{defn}\citep[Definition 2.10]{HY}
	Let $\mathcal{C},\mathbb{E}$ be as above. A realization $\mathfrak{s}$ of $\mathbb{E}$ is said to be {\em additive} if it satisfies the following conditions.
	
	(a) For any $A,~C\in\mathcal{C}$, the split $\mathbb{E}$-extension $0\in\mathbb{E}(C,A)$ satisfies $\mathfrak{s}(0)=0$.
	
	(b) $\mathfrak{s}(\delta\oplus\delta')=\mathfrak{s}(\delta)\oplus\mathfrak{s}(\delta')$ for any pair of $\mathbb{E}$-extensions $\delta$ and $\delta'$.
	
\end{defn}

\begin{defn}\citep[Definition 2.12]{HY}
	A triplet $(\mathcal{C}, \mathbb{E},\mathfrak{s})$ is called an \emph{ extriangulated category} if it satisfies the following conditions. \\
	$\rm(ET1)$ $\mathbb{E}$: $\mathcal{C}^{op}\times\mathcal{C}\rightarrow \mathbf{Ab}$ is a biadditive functor.\\
	$\rm(ET2)$ $\mathfrak{s}$ is an additive realization of $\mathbb{E}$.\\
	$\rm(ET3)$ Let $\delta\in\mathbb{E}(C,A)$ and $\delta'\in\mathbb{E}(C',A')$ be any pair of $\mathbb{E}$-extensions, realized as
	$$\mathfrak{s}(\delta)=[A\stackrel{x}{\longrightarrow}B\stackrel{y}{\longrightarrow}C] \quad \text{and} \quad \mathfrak{s}(\delta')=[A'\stackrel{x'}{\longrightarrow}B'\stackrel{y'}{\longrightarrow}C'].$$ For any commutative square	
    $$\xymatrix{
		A \ar[d]_{a} \ar[r]^{x} & B \ar[d]_{b} \ar[r]^{y} & C \\
		A'\ar[r]^{x'} &B'\ar[r]^{y'} & C'}$$
	in $\mathcal{C}$, there exists a morphism $(a,c)$: $\delta\rightarrow\delta'$ which is realized by $(a,b,c)$.\\
	$\rm(ET3)^{op}$ Dual of $\rm(ET3)$.\\
    $\rm(ET4)$ Let $\delta\in \mathbb{E}(D,A)$ and $\delta'\in \mathbb{E}(F,B)$ be $\mathbb{E}$-extensions respectively realized by
	$$A\stackrel{f}{\longrightarrow}B\stackrel{f'}{\longrightarrow}D\quad \text{and }\quad B\stackrel{g}{\longrightarrow}C\stackrel{g'}{\longrightarrow}F.$$
	Then there exist an object $E\in\mathcal{C}$, a commutative diagram
	$$\xymatrix{
		A \ar@{=}[d]\ar[r]^{f} &B\ar[d]_{g} \ar[r]^{f'} & D\ar[d]^{d} \\
		A \ar[r]^{h} & C\ar[d]_{g'} \ar[r]^{h'} & E\ar[d]^{e} \\
		& F\ar@{=}[r] & F   }$$
	in $\mathcal{C}$, and an $\mathbb{E}$-extension $\delta''\in \mathbb{E}(E,A)$ realized by $A\stackrel{h}{\longrightarrow}C\stackrel{h'}{\longrightarrow}E$, which satisfy the following compatibilities.\\
	$(\textrm{i})$ $D\stackrel{d}{\longrightarrow}E\stackrel{e}{\longrightarrow}F$ realizes $f'_*\delta'$,\\
	$(\textrm{ii})$ $d^*\delta''=\delta$,\\
	$(\textrm{iii})$ $f_*\delta''=e^*\delta$.\\
	$\rm(ET4)^{op}$ Dual of $\rm(ET4)$.
\end{defn}
For examples of extriangulated categories, see \citep[Example 2.13]{HY} and \citep[Remark 3.3]{JDP}.

We will use the following terminology.

\begin{defn}\citep[Definition 2.15 and 2.19]{HY}
	Let $(\mathcal{C},\mathbb{E},\mathfrak{s})$ be an extriangulated category.
	
	(1) A sequence $A\stackrel{x}\longrightarrow B\stackrel{y}\longrightarrow C$ is called \emph{conflation} if it realizes some $\mathbb{E}$-extension $\delta\in \mathbb{E}(C,A)$. In this case, $x$ is called an\emph{ inflation} and $y$ is called a \emph{deflation}.
	
	(2) If a conflation $A\stackrel{x}\longrightarrow B\stackrel{y}\longrightarrow C$ realizes $\delta\in \mathbb{E}(C,A)$, we call the pair ($A\stackrel{x}\longrightarrow B\stackrel{y}\longrightarrow C, \delta$) an $\mathbb{E}$-\emph{triangle}, and write it by
	$$\xymatrix{
		A \ar[r]^{x} & B  \ar[r]^{y} & C  \ar@{-->}[r]^{\delta} & }.$$
	We usually don't write this "$\delta$" if it not used in the argument.
	
	(3) Let $\xymatrix{
		A \ar[r]^{x} & B  \ar[r]^{y} & C  \ar@{-->}[r]^{\delta} & }$ and $\xymatrix{
		A '\ar[r]^{x'} & B'  \ar[r]^{y'} & C'  \ar@{-->}[r]^{\delta'} & }$ be any pair of $\mathbb{E}$-triangles. If a triplet $(a,b,c)$ realizes $(a,c):\delta \rightarrow\delta'$, then we write it as
	
	$$\xymatrix{
		A \ar[d]_{a} \ar[r]^{x} & B  \ar[r]^{y}\ar[d]_{b} & C \ar[d]_{c} \ar@{-->}[r]^{\delta} &  \\
		A'\ar[r]^{x'} & B' \ar[r]^{y'} & C' \ar@{-->}[r]^{\delta'} &    }
	$$
	and call $(a,b,c)$ a\emph{ morphism }of $\mathbb{E}$-triangles.
	
	(4) An $\mathbb{E}$-triangle $\xymatrix{A\ar[r]^{x}&B\ar[r]^{y}&C\ar@{-->}[r]^{\delta}&}$ is called\emph{ split} if $\delta=0$.
\end{defn}

Following lemmas will be used many times in tthis paper.                              

\begin{lem}\label{FJ}\citep[Corollary 3.5]{HY}
	Assume that $(\mathcal{C},\mathbb{E},\mathfrak{s})$ satisfies $\rm(ET1)$, $\rm(ET2)$, $\rm(ET3)$ and $\rm(ET3)^{op}$. Let
	$$
	\xymatrix{
		A\ar[r]^x\ar[d]^a&B\ar[r]^y\ar[d]^b&C\ar[d]^c\ar@{-->}[r]^{\delta} & \\
		A'\ar[r]^{x'}&B'\ar[r]^{y'}&C'\ar@{-->}[r]^{\delta'} &
	}
	$$
	be any morphism of $\mathbb{E}$-triangles. Then the following are equivalent.
	
	(1) $a$ factors through $x$.
	
	(2) $a_*\delta=c^*\delta'=0$.
	
	(3) $c$ factors through $y'$.
\end{lem}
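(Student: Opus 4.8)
The plan is to funnel all three conditions through the single vanishing condition $a_*\delta=0$ in $\mathbb{E}(C,A')$. The starting observation is that $(a,b,c)$ being a morphism of $\mathbb{E}$-triangles means, by definition, that $(a,c):\delta\to\delta'$ is a morphism of $\mathbb{E}$-extensions, i.e.
$$a_*\delta=c^*\delta'\quad\text{in }\mathbb{E}(C,A').$$
Hence the two entries of condition $(2)$ are the \emph{same} element of $\mathbb{E}(C,A')$, so $(2)$ is equivalent to ``$a_*\delta=0$'' and equally to ``$c^*\delta'=0$''. It therefore suffices to establish the two dual equivalences
$$(1)\iff a_*\delta=0\qquad\text{and}\qquad(3)\iff c^*\delta'=0,$$
after which the full chain $(1)\iff(2)\iff(3)$ is immediate.

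To prove $(1)\iff a_*\delta=0$, consider first the implication $\Rightarrow$: if $a=p\circ x$ for some $p:B\to A'$, then $a_*\delta=p_*(x_*\delta)$, so it is enough to know the basic identity $x_*\delta=0$. This follows from $\rm(ET3)$ applied to the commutative square with top row $A\xrightarrow{x}B$ (part of the $\mathbb{E}$-triangle realizing $\delta$) and bottom row the trivial $\mathbb{E}$-triangle $B\xrightarrow{1_B}B\to 0$; the resulting morphism $(x,c):\delta\to 0$ forces $x_*\delta=0$. For the converse $\Leftarrow$, assume $a_*\delta=0$. By the realization property (Definition 2.9, contained in $\rm(ET2)$) the morphism of $\mathbb{E}$-extensions $(a,1_C):\delta\to a_*\delta$ is realized by a triplet $(a,b,1_C)$ fitting into a commutative diagram whose top row realizes $\delta$ and whose bottom row realizes $a_*\delta$. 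Since $a_*\delta=0$, additivity of $\mathfrak{s}$ makes the bottom row the split conflation $A'\xrightarrow{\binom{1}{0}}A'\oplus C\xrightarrow{(0\ 1)}C$. Writing $b=\binom{b_1}{b_2}:B\to A'\oplus C$ and reading off the left-hand commutative square gives $b_1\circ x=a$, so $a$ factors through $x$. The equivalence $(3)\iff c^*\delta'=0$ is entirely dual: the vanishing $y'^*\delta'=0$ comes from $\rm(ET3)^{op}$, and for the converse one realizes $(1_{A'},c):c^*\delta'\to\delta'$ and uses the section of the split deflation to write $c=y'\circ q$.

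Conceptually this is nothing but the three-term exactness of the sequences $\mathcal{C}(B,A')\to\mathcal{C}(A,A')\to\mathbb{E}(C,A')$ and $\mathcal{C}(C,B')\to\mathcal{C}(C,C')\to\mathbb{E}(C,A')$ attached to the two $\mathbb{E}$-triangles, evaluated at the spots containing $a$ and $c$. I expect the only genuinely delicate point to be the ``kernel $\subseteq$ image'' halves, namely the converse factorizations, since these require actually invoking the realization of a morphism of $\mathbb{E}$-extensions and then performing the small bookkeeping that identifies the split realization and extracts the component $b_1$ (respectively the section of $y'$). The forward identities $x_*\delta=0$ and $y'^*\delta'=0$ are quick consequences of $\rm(ET3)$ and $\rm(ET3)^{op}$, and at no point is $\rm(ET4)$ needed, in agreement with the hypotheses of the statement.
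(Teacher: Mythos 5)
The paper does not prove this lemma at all: it is imported verbatim as \citep[Corollary 3.5]{HY}, so there is no in-paper argument to compare against. Your proof is correct and is essentially the standard one from Nakaoka--Palu: reduce (2) to the single vanishing $a_*\delta=c^*\delta'=0$ using the defining identity of a morphism of $\mathbb{E}$-extensions, get the forward implications from $x_*\delta=0$ and $y'^*\delta'=0$ (via $\rm(ET3)$ and $\rm(ET3)^{op}$), and get the converses by realizing $(a,1_C):\delta\to a_*\delta$ (resp.\ $(1_{A'},c):c^*\delta'\to\delta'$) against the split conflation $A'\to A'\oplus C\to C$ and reading off the relevant component of $b$. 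No gaps; in particular you correctly avoid $\rm(ET4)$, consistent with the hypotheses.
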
	
	\noindent In particular, in the case $\delta = \delta'$ and $(a, b, c) =(1_A, 1_B, 1_C )$, we have
	$$
	x \text{\; is a section} \Leftrightarrow \delta \text{\; is split} \Leftrightarrow y \text{\;is a retraction}.
	$$

	\begin{lem}\label{BH}\citep[Proposition 3.15]{HY}
		Let $(\mathcal{C},\mathbb{E},\mathfrak{s})$ be an extriangulated category. Then the following hold.
		
		(1) Let $C$  be any object, and let $A_1\stackrel{x_1}{\longrightarrow}B_1\stackrel{y_1}{\longrightarrow}C\stackrel{\delta_1}\dashrightarrow$  and $A_2\stackrel{x_2}{\longrightarrow}B_2\stackrel{y_2}{\longrightarrow}C\stackrel{\delta_2}\dashrightarrow$  be any pair of $\mathbb{E}$-triangles. Then there is a commutative diagram in $\mathcal{C}$
		$$
		\xymatrix{
			&{A_2}\ar[d]^{m_2}\ar@{=}[r]&{A_2}\ar[d]^{x_2}\\
			{A_1}\ar@{=}[d]\ar[r]^{m_1}&M\ar[r]^{e_1}\ar[d]^{e_2}&{B_2}\ar[d]^{y_2}\\
			 {A_1}\ar[r]^{x_1}&{B_1}\ar[r]^{y_1}&C}
		$$
		\noindent which satisfies $\mathfrak{s}(y^*_2\delta_1)=[A_1\stackrel{m_1}{\longrightarrow}M\stackrel{e_1}{\longrightarrow}B_2]$ and $\mathfrak{s}(y^*_1\delta_2)=[A_2\stackrel{m_2}{\longrightarrow}M\stackrel{e_2}{\longrightarrow}B_1]$.
		
		(2)  Let $A$  be any object, and let $A\stackrel{x_1}{\longrightarrow}B_1\stackrel{y_1}{\longrightarrow}C_1\stackrel{\delta_1}\dashrightarrow$  and $A\stackrel{x_2}{\longrightarrow}B_2\stackrel{y_2}{\longrightarrow}C_2\stackrel{\delta_2}\dashrightarrow$  be any pair of $\mathbb{E}$-triangles. Then there is a commutative diagram in $\mathcal{C}$
		$$
		\xymatrix{
			A\ar[d]^{x_2}\ar[r]^{x_1}&{B_1}\ar[d]^{m_2}\ar[r]^{y_1}&{C_1}\ar@{=}[d]\\
			{B_2}\ar[d]^{y_2}\ar[r]^{m_1}&M\ar[r]^{e_1}\ar[d]^{e_2}&{C_1}\\
			{C_2}\ar@{=}[r]&{C_2}
		}
		$$
		\noindent which satisfies $\mathfrak{s}(x_{2_*}\delta_1)=[B_2\stackrel{m_1}{\longrightarrow}M\stackrel{e_1}{\longrightarrow}C_1]$ and $\mathfrak{s}(x_{1_*}\delta_2)=[B_1\stackrel{m_2}{\longrightarrow}M\stackrel{e_2}{\longrightarrow}C_2]$.
	\end{lem}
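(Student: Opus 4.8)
The plan is to read part (1) as the construction of a homotopy pullback of the two deflations $y_1\colon B_1\to C$ and $y_2\colon B_2\to C$, and part (2) as the dual homotopy pushout. Since $(\mathcal{C}^{op},\mathbb{E}^{op},\mathfrak{s}^{op})$ is again extriangulated and part (2) is exactly part (1) interpreted in $\mathcal{C}^{op}$ (where inflations and deflations, pullbacks and pushouts, and the axioms $\rm(ET4)$ and $\rm(ET4)^{op}$ interchange), I would prove (1) in full and deduce (2) formally by dualizing. The object $M$ will be produced as a realization of the pulled-back $\mathbb{E}$-extension $y_2^*\delta_1\in\mathbb{E}(B_2,A_1)$, and the entire $3\times 3$ array of $\mathbb{E}$-triangles will be assembled from the realization property of $\mathfrak{s}$ together with $\rm(ET4)^{op}$.

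Concretely, first I would fix a realization $\mathfrak{s}(y_2^*\delta_1)=[A_1\xrightarrow{m_1}M\xrightarrow{e_1}B_2]$, which already supplies the middle row of the diagram together with the object $M$; by construction this row realizes $y_2^*\delta_1$, so one of the two required identities holds automatically. Next, observe that by the functoriality of $\mathbb{E}$, i.e. $(1_{A_1})_*(y_2^*\delta_1)=y_2^*\delta_1=y_2^*(\delta_1)$, the pair $(1_{A_1},y_2)$ is a morphism of $\mathbb{E}$-extensions $y_2^*\delta_1\to\delta_1$. Applying the realization property of $\mathfrak{s}$ to this morphism yields a map $e_2\colon M\to B_1$ making $(1_{A_1},e_2,y_2)$ a morphism of $\mathbb{E}$-triangles from the middle row onto the bottom row $A_1\xrightarrow{x_1}B_1\xrightarrow{y_1}C$. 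This gives at once $e_2m_1=x_1$ and $y_1e_2=y_2e_1$, that is, the commutative lower-right square of the asserted diagram.

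It then remains to construct $m_2\colon A_2\to M$ with $e_1m_2=x_2$ and to verify the second realization identity $\mathfrak{s}(y_1^*\delta_2)=[A_2\xrightarrow{m_2}M\xrightarrow{e_2}B_1]$. For this I would apply $\rm(ET4)^{op}$ to the composable pair of deflations $M\xrightarrow{e_1}B_2\xrightarrow{y_2}C$, whose kernels are $A_1$ (through $y_2^*\delta_1$) and $A_2$ (through $\delta_2$); this produces the kernel object of the composite $y_2e_1$ and an $\mathbb{E}$-triangle relating $A_1$ and $A_2$, together with the compatibility identities $(\mathrm{i})$–$(\mathrm{iii})$. The crucial point is to exploit the equality $y_2e_1=y_1e_2$ coming from the previous paragraph: the composite deflation factors both through $y_2$ and through $y_1$, and feeding the second factorization and Lemma~\ref{FJ} into the data from $\rm(ET4)^{op}$ should let me identify the kernel object with $M$ relative to $e_2$, extract $m_2$, and pin down that the triangle $A_2\xrightarrow{m_2}M\xrightarrow{e_2}B_1$ realizes precisely $y_1^*\delta_2$.

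The step I expect to be the main obstacle is exactly this last verification: the $\rm(ET4)^{op}$ octahedron delivers a third object and a pair of maps only up to the compatibilities it guarantees, and the genuine work is the bookkeeping with pullbacks $c^*$ and pushouts $a_*$ of extensions along $y_1,y_2$—using $\mathbb{E}(c,a)(\delta)=c^*a_*\delta=a_*c^*\delta$ and the characterization in Lemma~\ref{FJ}—needed to confirm that this third object is $A_2$, that the induced morphisms are the desired $m_2$ and $e_2$, and that the realized extension is $y_1^*\delta_2$ rather than merely an equivalent one. Once both realization identities are in hand the commutative $3\times 3$ array is complete, and part (2) follows verbatim by running the whole argument inside $\mathcal{C}^{op}$.
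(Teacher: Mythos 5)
First, a point of reference: the paper itself gives no proof of this lemma --- it is quoted verbatim from \citep[Proposition 3.15]{HY} --- so your attempt has to be measured against the argument in that reference rather than anything internal to this paper. Your opening two steps are correct and standard: realizing $y_2^*\delta_1$ as $[A_1\xrightarrow{m_1}M\xrightarrow{e_1}B_2]$ gives the middle row and the first identity for free, and since $(1_{A_1})_*(y_2^*\delta_1)=y_2^*\delta_1=y_2^*\bigl((1_{A_1})_*\delta_1\bigr)$, the pair $(1_{A_1},y_2)$ is a morphism of $\mathbb{E}$-extensions $y_2^*\delta_1\to\delta_1$ whose realization supplies $e_2$ with $e_2m_1=x_1$ and $y_1e_2=y_2e_1$. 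The reduction of (2) to (1) by passing to $\mathcal{C}^{\mathrm{op}}$ is also unobjectionable.

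The gap is in your third step, and it is not mere bookkeeping. Applying $(\mathrm{ET4})^{\mathrm{op}}$ to the composable deflations $M\xrightarrow{e_1}B_2\xrightarrow{y_2}C$ produces the kernel object $E$ of the composite $y_2e_1$ together with a conflation $A_1\to E\to A_2$; by the compatibility conditions this conflation realizes $x_2^*(y_2^*\delta_1)=(y_2x_2)^*\delta_1=0$, so $E\cong A_1\oplus A_2$. Your stated expectation that one should ``confirm that this third object is $A_2$'' is therefore false, and nothing in the $(\mathrm{ET4})^{\mathrm{op}}$ output tells you that $e_2$ itself is a deflation, let alone that $A_2\xrightarrow{m_2}M\xrightarrow{e_2}B_1$ is a conflation realizing $y_1^*\delta_2$ --- which is precisely the content of the lemma. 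What remains (and is the actual substance of the proof in \cite{HY}) is to extract $m_2$ as the composite $A_2\hookrightarrow A_1\oplus A_2\to M$, use the compatibilities $(\mathrm{i})$--$(\mathrm{iii})$ to identify the extension $\delta''\in\mathbb{E}(C,A_1\oplus A_2)$ realized by $E\to M\xrightarrow{y_2e_1}C$ with the pair $(\delta_1,\delta_2)$, and then run a further base/cobase-change argument (together with Lemma~\ref{FJ} and the uniqueness statements behind the realization axiom) to split off the $A_1$-component and land on the conflation $A_2\to M\to B_1$ realizing $y_1^*\delta_2$. As written, your proposal stops exactly where the real work begins.
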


	Now we are in the position to introduce the concept for the
	 proper classes of $\mathbb{E}$-triangles following \cite{JDP}. 
	  We always assume that $(\mathcal{C},\mathbb{E},\mathfrak{s})$ is an extriangulated categroy.
\begin{defn}
	Let $\xi$ be a class of $\mathbb{E}$-triangles. One says $\xi$ is \emph{closed under base change} if for any $\mathbb{E}$-triangle
	$$
	\xymatrix{
		A\ar[r]^x&B\ar[r]^y&C\ar@{-->}[r]^{\delta}&
	}
	\in \xi
	$$
	\noindent and any morphism $c:C'\rightarrow C$, then any $\mathbb{E}$-triangle$\xymatrix{A\ar[r]^{x'}&B'\ar[r]^{y'}&C'\ar@{-->}[r]^{c^*\delta}&}$ belongs to $\xi$.
	
	Dually, one says $\xi$ is \emph{closed under cobase change} if for any $\mathbb{E}$-triangle
	$$
	\xymatrix{
		A\ar[r]^x&B\ar[r]^y&C\ar@{-->}[r]^{\delta}&
	}
	\in \xi
	$$
	\noindent and any morphism $a:A\rightarrow A'$, then any $\mathbb{E}$-triangle$\xymatrix{A'\ar[r]^{x'}&B'\ar[r]^{y'}&C\ar@{-->}[r]^{a_*\delta}&}$ belongs to $\xi$.
	
\end{defn}

\begin{defn}
	A class of $\mathbb{E}$-triangles $\xi$ is called \emph{saturated} if in the situation of Lemma \ref{BH}(1), when $\xymatrix{A_2\ar[r]^{x_2}&B_2\ar[r]^{y_2}&C\ar@{-->}[r]^{\delta_2}&}$ and $\xymatrix{A_1\ar[r]^{m_1}&M\ar[r]^{m_1}&B_2\ar@{-->}[r]^{y_2^*\delta_1}&}$ belong to $\xi$, then the
	$\mathbb{E}$-triangle $\xymatrix{A_1\ar[r]^{x_1}&B_1\ar[r]^{y_1}&C\ar@{-->}[r]^{\delta_1}&}$ belongs to  $\xi$.
\end{defn}

We denote the full subcategory consisting of the split $\mathbb{E}$-triangle by $\Delta_0$.
\begin{defn}\label{ZL}\citep[Definition 3.1]{JDP}
	Let $\xi$ be a class of $\mathbb{E}$-triangles which is closed under isomorphisms. $\xi$ is called a \emph{proper class } of $\mathbb{E}$-triangles if the following conditions holds:
	
	(1) $\xi$ is closed under finite coproducts and $\Delta_0 \subseteq \xi$.
	
	(2) $\xi$ is closed under base change and cobase change.
	
	(3) $\xi$ is saturated.
\end{defn}

\begin{defn}\citep[Definition 4.1]{JDP} An object $P\in \mathcal{C}$ is called $\xi$-\emph{projective} if for any $\mathbb{E}$-triangle
	
	$$
	\xymatrix{
		A\ar[r]^{x}&B\ar[r]^{y}&C\ar@{-->}[r]^{\delta}&
	}
	$$
	in $\xi$, the induced sequence of abelian groups
	$$
	0\longrightarrow\mathcal{C}(P,A)\longrightarrow \mathcal{C}(P,B)\longrightarrow\mathcal{C}(P,C)\longrightarrow 0
	$$
	is exact. Dually, we have the definition of $\xi$-injective.
\end{defn}

We denote $\P$ (resp. $\mathcal{I}(\xi)$) the class of $\xi$-projective 
(resp. $\xi$-injective) objects of $\Mcc$. An extriangulated category $(\mathcal{C},\mathbb{E},\mathfrak{s})$ is said 
	to have \emph{enough $\xi$-projectives } (resp. \emph{enough $\xi$-injectives }  ) 
	provided that for each object $A$ there exists an $
\mathbb{E}$-triangle $K\longrightarrow P\longrightarrow A\dashrightarrow$ 
(resp. $A\longrightarrow I\longrightarrow K\dashrightarrow$) in $\xi$ with
 $P\in\mathcal{P}(\xi)$(resp. $I\in\mathcal{I}(\xi)$).

 \begin{lem}\label{FBZH1}\citep[Lemma 4.2]{JDP}
	If $\mathcal{C}$ has enough $\xi$-projectives, then an $\mathbb{E}$-triangle $ A\longrightarrow B\longrightarrow C\dashrightarrow$ in $\xi$ if and only if induced sequence of abelian groups
	$$
	0\longrightarrow\mathcal{C}(P,A)\longrightarrow \mathcal{C}(P,B)\longrightarrow\mathcal{C}(P,C)\longrightarrow0
	$$
	is exact for all $P\in\mathcal{P}(\xi)$.
\end{lem}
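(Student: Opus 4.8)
The plan is to prove the stated equivalence in Lemma~\ref{FBZH1}, namely that an $\mathbb{E}$-triangle $A\longrightarrow B\longrightarrow C\dashrightarrow$ lies in $\xi$ if and only if the induced sequence $0\to\mathcal{C}(P,A)\to\mathcal{C}(P,B)\to\mathcal{C}(P,C)\to 0$ is exact for every $P\in\mathcal{P}(\xi)$. The forward direction is immediate: it is exactly the defining property of $\xi$-projective objects, so if the triangle is in $\xi$ then testing against any $P\in\mathcal{P}(\xi)$ yields the required short exact sequence of abelian groups. Thus the whole content lies in the converse, and that is where the hypothesis ``$\mathcal{C}$ has enough $\xi$-projectives'' is essential.

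For the converse, suppose $A\stackrel{x}\longrightarrow B\stackrel{y}\longrightarrow C\stackrel{\delta}\dashrightarrow$ is an $\mathbb{E}$-triangle (not yet known to be in $\xi$) such that $0\to\mathcal{C}(P,A)\to\mathcal{C}(P,B)\to\mathcal{C}(P,C)\to 0$ is exact for all $P\in\mathcal{P}(\xi)$. Using enough $\xi$-projectives, I would choose an $\mathbb{E}$-triangle $K\stackrel{k}\longrightarrow P\stackrel{p}\longrightarrow C\stackrel{\eta}\dashrightarrow$ in $\xi$ with $P\in\mathcal{P}(\xi)$. Applying exactness of the test sequence at this particular $P$, the surjectivity of $\mathcal{C}(P,B)\to\mathcal{C}(P,C)$ produces a morphism $q\colon P\to B$ with $yq=p$. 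This gives a commutative square lifting $1_P$ against the deflation $y$, and the idea is to compare the two $\mathbb{E}$-triangles over $C$ and conclude that $\delta$ is obtained from $\eta\in\xi$ by a base change, forcing $\delta\in\xi$ by closure under base change.

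The key technical step is to realize $\delta$ as a pullback of $\eta$. From $yq=p$ I would like to say $\eta = p^{*}(\text{something})$; more precisely, the morphism $(1_P$-lifting$)$ together with the functoriality of $\mathbb{E}$ gives a relation between $\delta$ and $\eta$. The cleanest route is to show $\eta = q^{*}\delta$ up to the identification coming from $p=yq$: since $\mathfrak{s}(\delta)=[A\to B\to C]$ and we have a map $P\to B\to C$ agreeing with $p$, Lemma~\ref{FJ} (or direct use of the realization axioms) identifies $\eta$ with a base change of $\delta$ along $p$, and conversely exhibits $\delta$ as a base change of $\eta\in\xi$. Because $\xi$ is closed under base change by Definition~\ref{ZL}(2), it then follows that $\delta\in\xi$, i.e. the original $\mathbb{E}$-triangle belongs to $\xi$. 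The main obstacle I anticipate is bookkeeping the direction of the base/cobase change correctly: one must verify that the comparison morphism really presents $\delta$ as $c^{*}\eta$ for an appropriate $c$ (rather than the reverse), and that the exactness hypothesis at the single object $P$ genuinely supplies the lift needed to invoke closure under base change. Once that identification is pinned down, closure under base change in the proper class $\xi$ closes the argument.
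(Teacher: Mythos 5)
Your skeleton is the right one: the forward implication is precisely the definition of $\xi$-projectivity, and for the converse the two essential moves are indeed to use enough $\xi$-projectives to produce an $\mathbb{E}$-triangle $K\stackrel{k}{\longrightarrow}P\stackrel{p}{\longrightarrow}C\stackrel{\eta}{\dashrightarrow}$ in $\xi$ with $P\in\mathcal{P}(\xi)$, and to use the assumed surjectivity of $\mathcal{C}(P,y)$ to lift $p$ to some $q\colon P\to B$ with $yq=p$. (For the record, the paper gives no proof of this lemma at all; it is quoted verbatim from \cite{JDP}, so the comparison below is with the standard argument rather than with anything printed here.)

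The gap is in your comparison step, and it is not merely a matter of bookkeeping the variance: the identities you propose cannot hold for type reasons. We have $\delta\in\mathbb{E}(C,A)$, $\eta\in\mathbb{E}(C,K)$ and $q^{*}\delta\in\mathbb{E}(P,A)$, so neither ``$\eta=q^{*}\delta$'' nor ``$\delta$ is a base change of $\eta$'' makes sense --- base change only alters the third term of an extension, never the first, so no base change of $\eta$ can land in $\mathbb{E}(C,A)$. The correct relation is a \emph{cobase} change in the first argument: the identity $yq=p\circ 1_{C}$ is a commutative square between the right halves of the two $\mathbb{E}$-triangles, so $(\mathrm{ET3})^{\mathrm{op}}$ yields a morphism $a\colon K\to A$ such that $(a,q,1_{C})$ is a morphism of $\mathbb{E}$-triangles realizing $(a,1_{C})\colon\eta\to\delta$, which by definition means $a_{*}\eta=1_{C}^{*}\delta=\delta$; closure of $\xi$ under cobase change (Definition \ref{ZL}(2)) then gives $\delta\in\xi$. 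Alternatively, and this is the route for which the axioms of a proper class are visibly tailored, apply Lemma \ref{BH}(1) to the two triangles over $C$: the resulting $\mathbb{E}$-triangle $A\to M\to P$ realizes $p^{*}\delta=(yq)^{*}\delta=q^{*}(y^{*}\delta)=0$ (since $p$ factors through $y$, by the $\mathcal{C}(P,-)$--$\mathbb{E}(P,-)$ exact sequence of \cite{HY}), hence is split and lies in $\Delta_{0}\subseteq\xi$; together with $\eta\in\xi$, saturation (Definition \ref{ZL}(3)) forces $\delta\in\xi$. Either repair is short, but as written your proof stops exactly at the point where the argument has its content.
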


The \emph{$\xi$-projective dimension} $\xi$-pd$A$ of an object $A$ is defined inductively. When $A=0$, put $\xi$-pd$A=-1$.  If $A\in\mathcal{P}(\xi)$, then define $\xi$-pd$A=0$. Next by induction, for an integer $n>0$, put $\xi$-pd$A\leqslant n$ if there exists an $\mathbb{E}$-triangle $K\rightarrow P\rightarrow A\dashrightarrow$ in $\xi$ with $P\in \mathcal{P}(\xi)$ and $\xi$-pd$K\leqslant n-1$.
We say $\xi$-pd$A=n$ if $\xi$-pd$A\leqslant n$ and $\xi$-pd$A \nleqslant  n-1$. 
If $\xi$-pd$A\neq n$, for all $n\geqslant0$, we set  $\xi$-pd$A=\infty$.
Dually we can define the $\xi$-injective dimension $\xi$-id$A$ of an object $A\in\Mcc$.

We use $\widehat{\mathcal{P}}(\xi)$ (resp. $\widehat{\mathcal{I}}(\xi)$) to 
denote the full subcategory of $\Mcc$ whose objects have finite $\xi$-projective 
 (resp. $\xi$-injective) dimension.

\begin{defn}\citep[Definition 4.4]{JDP}
	An complex $\mathbf{X}$ is called \emph{$\xi$-exact} if $\mathbf{X}$ is a diagram
	$$\xymatrix{
		\cdots\ar[r] &X_1\ar[r]^{d_1}&X_0 \ar[r]^{d_0}&X_{-1}\ar[r]&\cdots
	}
	$$
	in $\mathcal{C}$ such that for each integer $n$, there exists an $\mathbb{E}$-triangle $K_{n+1}\stackrel{g_n}\longrightarrow X_n\stackrel{f_n}\longrightarrow C\stackrel{\delta_n}\dashrightarrow$ in $\xi$ and $d_n=g_{n-1}f_n$. These $\mathbb{E}$-triangles are called the\emph{ resolution $\mathbb{E}$-triangles }of the $\xi$-exact complex $\mathbf{X}$.
\end{defn}

\begin{defn}\citep[Definition 4.5, 4.6]{JDP}
	Let $\mathcal{W}$ be a class of objects in $\Mcc$. An $\mathbb{E}$-triangle $A\longrightarrow B\longrightarrow C\dashrightarrow$ in $\xi$ is called to be $\Mcc(-,\mathcal{W})$\emph{-exact} (respectively $\Mcc(\mathcal{W},-)$\emph{-exact}) if for any $W\in\mathcal{W}$, the induced sequence of abelian group $0 \rightarrow \Mcc(C,W)\rightarrow\Mcc(B,W)\rightarrow\Mcc(A,W)\rightarrow0$ (respectively $0 \rightarrow \Mcc(W,A)\rightarrow\Mcc(W,B)\rightarrow\Mcc(W,C)\rightarrow0$) is exact in $\mathbf{Ab}$.

	A complex $\mathbf{X}$ is called $\Mcc(-,\mathcal{W})$-\emph{exact}  ( respectively $\Mcc(\mathcal{W},-)$-\emph{exact} ) if it is a $\xi$-exact complex with $\Mcc(-,\mathcal{W})$-exact resolution $\mathbb{E}$-triangles ( respectively $\Mcc(\mathcal{W},-)$-exact resolution $\mathbb{E}$-triangles ).
	
	A $\xi$-exact complex $\mathbf{X}$ is called \emph{complete $\Mcp(\xi)$-exact} if it is $\Mcc(-,\Mcp(\xi))$-exact.
\end{defn}
\begin{defn}
	An \emph{$\xi$-projective resolution} of an object $A\in\Mcc$ is a $\xi$-exact complex
	\[
	\xymatrix{
		\cdots\ar[r]&P_{n}\ar[r]&P_{n-1}\ar[r]&{\cdots}\ar[r]&P_{1}\ar[r]&P_{0}\ar[r]&A\ar[r]&0
	}
	\]
	in $\Mcc$ with $P_{n}\in\P$ for all $n\geqslant0$.
\end{defn}
\begin{defn}\citep[Definition 4.7, 4.8]{JDP}
	A \emph{complete $\xi$-projective resolution} is a complete $\Mcp(\xi)$-exact complex
	$$
	\mathbf{P}: \xymatrix{ \cdots\ar[r]&P_1\ar[r]^{d_1}&P_{0}\ar[r]^{d_0}&P_{-1}\ar[r]&{\cdots}}
	$$
	in $\Mcc$ such that $P_n$ is projective for each integer $n$ . 
	And for any $P_n$, there exists a $\Mcc(-,\Mcp(\xi))$-exact $\mathbb{E}$-triangle
	 $\xymatrix{ K_{n+1}\ar[r]^{g_n}&P_n\ar[r]^{f_n}&K_n\ar@{-->}[r]^{\delta_n}&}$ 
	 in $\xi$ which is the resolution $\mathbb{E}$-triangle of $\mathbf{P}$. 
	 Then the objects $K_n$ are called \emph{$\xi$-$\Mcg$projective} for each integer $n$. 
\end{defn}
Dually we can define the $\xi$-$\mathcal{G}$injective objects. We denote by
 $\Mcg\Mcp(\xi)$ (rsep. $\GI$) the subcategory of $\xi$-$\Mcg$projective ($\xi$-$\Mcg$injective)
  objects in $\Mcc$.

  \begin{defn}\citep[Definition 3.18]{He}\label{Gpr}
  A \emph{$\Gproj$ resolution} of an object $A\in\Mcc$ is a $\xi$-exact complex
  \[
  \xymatrix{
	  \cdots\ar[r]&G_{n}\ar[r]&G_{n-1}\ar[r]&{\cdots}\ar[r]&G_{1}\ar[r]&G_{0}\ar[r]&A\ar[r]&0
  }
  \]
  in $\Mcc$ such that $G_{n}\in\GP$ for all $n\geqslant0$. \emph{$\xi$-$\mathcal{G}$injective coresolution} of 
  an object $A\in\Mcc$ is defined by dual.
\end{defn}

The \emph{$\Gproj$ dimension} $\Gpd A$ of an object $A$ is defined inductively. When $A=0$, put $\Gpd A=-1$.  If $A\in\GP$, then define $\Gpd A=0$. Next by induction, for an integer $n>0$, put $\Gpd A\leqslant n$ if there exists an $\mathbb{E}$-triangle $K\rightarrow G\rightarrow A\dashrightarrow$ in $\xi$ with $G\in \GP$ and $\Gpd K\leqslant n-1$.
We say $\Gpd A=n$ if $\Gpd A\leqslant n$ and $\Gpd A \nleqslant  n-1$. If $\Gpd A\neq n$, for all $n\geqslant0$, we set  $\Gpd A=\infty$.
Dually we can define the  $\xi$-$\mathcal{G}$injective dimension $\xi$-$\mathcal{G}$id$A$ of an object $A\in\Mcc$.
  
We use $\widehat{\mathcal{GP}}(\xi)$ (resp. $\widehat{\mathcal{GI}}(\xi)$) to 
denote the full subcategory of $\Mcc$ whose objects have finite $\Gproj$ 
 (resp. $\xi$-$\mathcal{G}$injective) dimension.

Here we introduce the weak idempotent completeness condition for an extriangulated categories.
\begin{cond}[Condition (WIC)]\label{WIC}
	Consider the following conditions.
	
	(1) Let $f\in\Mcc(A,B),g\in\Mcc(B,C)$ be any composable  pair of morphisms. If $gf$ is an inflation, then so is $f$.
	
	(2) Let $f\in\Mcc(A,B),g\in\Mcc(B,C)$ be any composable  pair of morphisms. If $gf$ is a deflation, then so is $g$.
\end{cond}
\begin{prop}\label{INDE}\citep[Proposition 4.13]{JDP}
	Let $f\in\Mcc(A,B),g\in\Mcc(B,C)$ be any composable  pair of morphisms. We have that
	
	(1) if $gf$ is a $\xi$-inflation, then so is $f$;
	
	(2) if $gf$ is a $\xi$-deflation, then so is $g$.
\end{prop}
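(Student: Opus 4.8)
The plan is to prove (1) and (2) dually, using Condition~\ref{WIC} to supply an ordinary inflation/deflation and then the base- and cobase-change closure of the proper class $\xi$ (part~(2) of Definition~\ref{ZL}) to promote it to a $\xi$-inflation/$\xi$-deflation. The guiding observation is that a $\xi$-inflation is in particular an inflation, so that once $f$ (resp.\ $g$) is known to be an inflation (resp.\ deflation) its defining $\mathbb{E}$-extension can be identified, via $(\mathrm{ET3})$ and the functoriality of $\mathbb{E}$, with a base (resp.\ cobase) change of the $\mathbb{E}$-extension coming from $gf$.

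For (1), I would begin with a $\xi$-triangle $A\xrightarrow{gf}C\xrightarrow{p}M\dashrightarrow$ realizing some $\delta\in\mathbb{E}(M,A)$, which witnesses that $gf$ is a $\xi$-inflation. Since $gf=g\circ f$ is then an inflation, Condition~\ref{WIC}(1) shows that $f$ is an inflation; fix an $\mathbb{E}$-triangle $A\xrightarrow{f}B\xrightarrow{q}L\dashrightarrow$ realizing some $\delta'\in\mathbb{E}(L,A)$. The equality $g\circ f=(gf)\circ\mathrm{id}_A$ is a commutative square built from the two inflations $f$ and $gf$, so $(\mathrm{ET3})$ yields a morphism of $\mathbb{E}$-triangles $(\mathrm{id}_A,g,c)\colon\delta'\to\delta$ for a suitable $c\colon L\to M$. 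By the defining relation of a morphism of $\mathbb{E}$-extensions this gives $\delta'=(\mathrm{id}_A)_*\delta'=c^*\delta$. As $\delta\in\xi$ and $\xi$ is closed under base change, the triangle $A\xrightarrow{f}B\xrightarrow{q}L\dashrightarrow$ realizing $c^*\delta$ belongs to $\xi$, so $f$ is a $\xi$-inflation.

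For (2), I would run the dual argument. Starting from a $\xi$-triangle $N\xrightarrow{n}A\xrightarrow{gf}C\dashrightarrow$ realizing some $\delta\in\mathbb{E}(C,N)$, Condition~\ref{WIC}(2) makes $g$ a deflation, with realizing triangle $K\xrightarrow{k}B\xrightarrow{g}C\dashrightarrow$ for some $\delta'\in\mathbb{E}(C,K)$. The commutative square assembled from the deflations $gf$ and $g$ together with $f$ and $\mathrm{id}_C$ lets $(\mathrm{ET3})^{\mathrm{op}}$ produce a morphism $(a,f,\mathrm{id}_C)\colon\delta\to\delta'$ with $a\colon N\to K$, whence $\delta'=(\mathrm{id}_C)^*\delta'=a_*\delta$. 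Closure of $\xi$ under cobase change then forces $\delta'\in\xi$, so $g$ is a $\xi$-deflation.

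I expect the only genuinely delicate step to be the very first reduction. The implication ``$gf$ an inflation $\Rightarrow$ $f$ an inflation'' fails in a general extriangulated category, and it is exactly this weak idempotent-completeness phenomenon that Condition~\ref{WIC} is designed to guarantee; hence the proof must invoke it at the outset rather than attempt to construct the triangle for $f$ by hand. Everything after that is formal: selecting the correct commutative square, reading off $\delta'=c^*\delta$ (resp.\ $a_*\delta$) from the induced morphism of $\mathbb{E}$-extensions, and quoting the closure axioms. In particular, saturation (part~(3) of Definition~\ref{ZL}) is not needed here---only the base- and cobase-change clauses.
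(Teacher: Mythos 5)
Your argument is correct: the paper itself imports this statement from \citep[Proposition 4.13]{JDP} without proof, and your route --- invoke Condition (WIC) to make $f$ (resp.\ $g$) an inflation (resp.\ deflation), use $(\mathrm{ET3})$ or $(\mathrm{ET3})^{\mathrm{op}}$ on the evident commutative square to identify the resulting $\mathbb{E}$-extension as $c^*\delta$ (resp.\ $a_*\delta$), and conclude by closure of $\xi$ under base (resp.\ cobase) change --- is essentially the proof given in that reference. Your closing remark is also on point: (WIC) is exactly what is needed for the initial reduction, and saturation plays no role.
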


\section{Gorenstein derived functors}

\quad~Throughout this section, we always assume that $\mathcal{C}=(\Mcc,\Mbe,\mathfrak{s})$ is 
an extriangulated category
with enough $\xi$-projectives and enough $\xi$-injectives satisfying  Condition(WIC)) and $\xi$ 
is a proper class of $\Mbe$-triangles in $(\Mcc,\Mbe,\mathfrak{s})$.

 \begin{defn}\citep[Definition 3.2]{JZP}
	Let $A$ and $B$ be objects in $\mathcal{C}$.
	
	(1) If we choose a $\xi$-projective resolution $\xymatrix{\mathbf{P}\ar[r]&A}$ of $A$, then for any  integer $n\geqslant 0$, the $\xi$-cohomology groups $\xi \text{xt}_{\mathcal{P}(\xi)}^n(A, B)$ are defined as
	$$
	\xi \text{xt}_{\mathcal{P}(\xi)}^n(A, B)=H^n(\mathcal{C}(\mathbf{P},B)).
	$$
	
	(2) If we choose a $\xi$-injective coresolution $\xymatrix{B\ar[r]&\mathbf{I}}$ of $B$, then for any  integer $n\geqslant 0$, the $\xi$-cohomology groups $\xi \text{xt}_{\mathcal{I}(\xi)}^n(A, B)$ are defined as
	$$
	\xi \text{xt}_{\mathcal{I}(\xi)}^n(A, B)=H^n(\mathcal{C}(A,\mathbf{I})).
	$$
	
	Then there exists an isomorphism $\xi \text{xt}_{\mathcal{P}(\xi)}^n(A, B)\backsimeq \xi \text{xt}_{\mathcal{I}(\xi)}^n(A, B)$, which is denoted by $\xi \text{xt}_{\xi}^n(A,B)$.
	\end{defn}
	
	\begin{lem}\label{LZHL}\citep[Lemma 3.4]{JZP}
	If $\xymatrix{A\ar[r]^x &B\ar[r]^y &C\ar@{-->}[r]^{\delta}&}$ is an $\mathbb{E}$-triangle in $\xi$, then for any objects $X$  in $\mathcal{C}$, we have the following long exact sequences in $\mathbf{Ab}$
	$$
	\xymatrix{
	0\ar[r]&\ext^0(X,A)\ar[r]&\ext^0(X,B)\ar[r]&\ext^0(X,C)\ar[r]&\ext^1(X,A)\ar[r]&\cdots
	}
	$$
	and
	$$
	\xymatrix{
	0\ar[r]&{\ext^0(C,X)}\ar[r]&{\ext^0(B,X)}\ar[r]&{\ext^0(A,X)}\ar[r]&{\ext^1(C,X)}\ar[r]&{\cdots}}
	$$
	For any objects $A$ and $B$, there is always a natural map $\delta: \mathcal{C}\rightarrow\ext^0(A,B)$, which is an isomorphism if $A\in \P$ or $B\in \mathcal{I}(\xi)$.
	\end{lem}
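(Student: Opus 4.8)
The plan is to deduce both long exact sequences from the classical long exact cohomology sequence attached to a short exact sequence of cochain complexes of abelian groups, and to produce the natural map by comparing augmentations. For the sequence in the second variable, first I would choose a $\xi$-projective resolution $\mathbf{P}\colon \cdots\to P_{1}\stackrel{d_{1}}{\to}P_{0}\stackrel{\varepsilon}{\to}X\to 0$ of $X$, which exists because $\mathcal{C}$ has enough $\xi$-projectives. Each $P_{n}$ lies in $\P$, so applying $\mathcal{C}(P_{n},-)$ to the given $\mathbb{E}$-triangle $A\stackrel{x}{\to}B\stackrel{y}{\to}C$, which belongs to $\xi$, yields by the definition of $\xi$-projectivity a short exact sequence of abelian groups $0\to \mathcal{C}(P_{n},A)\to \mathcal{C}(P_{n},B)\to \mathcal{C}(P_{n},C)\to 0$. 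Assembling these degreewise produces a short exact sequence of cochain complexes
$$0\to \mathcal{C}(\mathbf{P},A)\to \mathcal{C}(\mathbf{P},B)\to \mathcal{C}(\mathbf{P},C)\to 0 .$$

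Invoking the standard long exact sequence in cohomology in $\mathbf{Ab}$, and recalling that $H^{n}(\mathcal{C}(\mathbf{P},-))=\ext^{n}(X,-)$ while all three complexes vanish in negative degrees (so that $H^{-1}=0$), the connecting homomorphisms assemble into exactly the first displayed sequence, beginning with $0\to\ext^{0}(X,A)$. The sequence in the first variable is obtained dually: choose a $\xi$-injective coresolution $X\to\mathbf{I}$ (using enough $\xi$-injectives), apply $\mathcal{C}(-,I_{n})$ to the same $\mathbb{E}$-triangle, and use the $\xi$-injectivity of each $I_{n}$ to obtain a short exact sequence of cochain complexes $0\to \mathcal{C}(C,\mathbf{I})\to \mathcal{C}(B,\mathbf{I})\to \mathcal{C}(A,\mathbf{I})\to 0$. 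Its long exact cohomology sequence, together with $H^{n}(\mathcal{C}(-,\mathbf{I}))=\ext^{n}(-,X)$, yields the second assertion.

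For the natural map, I would take a $\xi$-projective resolution of $A$ with augmentation $\varepsilon\colon P_{0}\to A$; since $\varepsilon d_{1}=0$, precomposition sends $\varphi\in\mathcal{C}(A,B)$ to $\varphi\varepsilon\in\ker(d_{1}^{*})=H^{0}(\mathcal{C}(\mathbf{P},B))=\ext^{0}(A,B)$, and this assignment is manifestly functorial in both variables. If $A\in\P$ one may use the resolution with $P_{0}=A$ and $P_{n}=0$ for $n>0$, so that $\ext^{0}(A,B)=\mathcal{C}(A,B)$ and the map is the identity. If $B\in\mathcal{I}(\xi)$, then $\mathcal{C}(-,B)$ is exact on every $\mathbb{E}$-triangle of $\xi$, hence carries the $\xi$-exact complex $\mathbf{P}\to A$ to an exact complex $0\to\mathcal{C}(A,B)\to\mathcal{C}(P_{0},B)\to\cdots$; exactness at $\mathcal{C}(P_{0},B)$ identifies $\mathcal{C}(A,B)$ with $H^{0}(\mathcal{C}(\mathbf{P},B))$, and this identification is precisely the natural map.

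The only genuine content is the passage from the $\mathbb{E}$-triangle to the degreewise short exact sequences, which rests squarely on the definition of $\xi$-projective (resp. $\xi$-injective) objects together with the hypothesis that the triangle lies in $\xi$; once the short exact sequence of complexes is available, the remainder is the formal homological algebra of cochain complexes of abelian groups and transfers verbatim. The one point demanding care is the independence of $\ext^{n}$ of the chosen resolution and the compatibility of the resulting isomorphisms with the connecting maps, which is guaranteed by the comparison theorem underlying the definition of $\ext^{n}$; thus I expect no real obstacle beyond bookkeeping.
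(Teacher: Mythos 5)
The paper does not prove this lemma itself --- it is quoted verbatim from \citep[Lemma 3.4]{JZP} --- but your argument is the standard one that proof must follow: the degreewise short exact sequences $0\to\mathcal{C}(P_n,A)\to\mathcal{C}(P_n,B)\to\mathcal{C}(P_n,C)\to0$ come straight from the definition of $\xi$-projectivity applied to a triangle in $\xi$, and everything else is the long exact cohomology sequence in $\mathbf{Ab}$ plus the augmentation comparison for the map $\mathcal{C}(A,B)\to\ext^0(A,B)$. Your treatment of the two special cases ($A\in\P$ via the trivial resolution, $B\in\mathcal{I}(\xi)$ via exactness of $\mathcal{C}(-,B)$ on the resolution $\mathbb{E}$-triangles) is correct, so I see no gap.
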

	
	\begin{lem}\label{ZHL}\citep[Lemma 3.5]{JZP}
	Let $M\in\mathcal{C}$ and $G\in\GP$. If 
	$M\in\widehat{\Mcp}(\xi)$ or $M\in\widehat{\mathcal{I}}(\xi)$, then
	 $${\ext^0(G,M)}\backsimeq \mathcal{C}(G,M)\  \text{and}\  \ext^i(G,M)=0 \ \text{for any $i\geqslant 1$}.$$
	\end{lem}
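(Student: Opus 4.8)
The plan is to treat the two hypotheses separately, inducting on $\xi$-id$M$ when $M\in\widehat{\mathcal{I}}(\xi)$ and on $\xi$-pd$M$ when $M\in\widehat{\mathcal{P}}(\xi)$, computing $\ext^\ast(G,M)$ from whichever resolution is convenient and tracking the natural transformation $\theta_{-}\colon\mathcal{C}(G,-)\to\ext^0(G,-)$ of Lemma \ref{LZHL}. Note first that $\theta$ is always monic: in any resolution triangle $K_1\to P_0\to G\dashrightarrow$ the map $P_0\to G$ is a $\xi$-deflation and $\mathcal{C}(-,N)$ is left exact. Two further facts will be used repeatedly. (i) $\ext^i(P,M)=0$ for $i\geq1$ and $P\in\mathcal{P}(\xi)$, computed from the trivial resolution $P\xrightarrow{=}P$. (ii) For any $G\in\GP$ there is an $\mathbb{E}$-triangle $G\to P\to G'\dashrightarrow$ in $\xi$ with $P\in\mathcal{P}(\xi)$ and $G'\in\GP$, obtained by writing $G$ as a syzygy of its complete $\xi$-projective resolution $\mathbf{P}$ and reading off the resolution triangle in which $G$ is the first term. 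For the base cases: if $M\in\mathcal{I}(\xi)$ then $\theta_M$ is an isomorphism by the last sentence of Lemma \ref{LZHL}, and $\ext^i(G,M)=0$ for $i\geq1$ since the one-term coresolution $M\xrightarrow{=}M$ computes $\ext^\ast(G,M)$; if $M\in\mathcal{P}(\xi)$, applying $\mathcal{C}(-,M)$ to the $\mathcal{C}(-,\mathcal{P}(\xi))$-exact resolution of $G$ yields an exact complex, giving $\ext^i(G,M)=0$ for $i\geq1$ and $\ext^0(G,M)\cong\mathcal{C}(G,M)$.

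For the inductive step with $M\in\widehat{\mathcal{I}}(\xi)$ and $\xi$-id$M=n\geq1$, choose an $\mathbb{E}$-triangle $M\to I\to L\dashrightarrow$ in $\xi$ with $I\in\mathcal{I}(\xi)$ and $\xi$-id$L\leq n-1$, and feed it into the first long exact sequence of Lemma \ref{LZHL}. For $i\geq2$ both $\ext^i(G,I)$ (base case) and $\ext^{i-1}(G,L)$ (inductive hypothesis) vanish, so $\ext^i(G,M)=0$; crucially this holds for \emph{every} $G\in\GP$. The remaining value $\ext^1(G,M)$ is the only genuine difficulty: the long exact sequence merely identifies it with $\mathrm{coker}\big(\mathcal{C}(G,I)\to\mathcal{C}(G,L)\big)$, and killing this cokernel by the same sequence would be circular. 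I break the circle by shifting dimension in the first variable: applying the second long exact sequence of Lemma \ref{LZHL} to the triangle $G\to P\to G'$ of fact (ii), together with fact (i), yields $\ext^1(G,M)\cong\ext^2(G',M)$, and the right-hand group already vanishes by the $i\geq2$ case applied to $G'\in\GP$. Hence $\ext^1(G,M)=0$, and $\theta_M$ is then an isomorphism by comparing the left-exact sequences coming from $\mathcal{C}(G,-)$ and $\ext^0(G,-)$ on $M\to I\to L$, whose middle and right vertical maps $\theta_I,\theta_L$ are isomorphisms.

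For the inductive step with $M\in\widehat{\mathcal{P}}(\xi)$ and $\xi$-pd$M=p\geq1$, choose $K\to Q\to M\dashrightarrow$ in $\xi$ with $Q\in\mathcal{P}(\xi)$ and $\xi$-pd$K\leq p-1$, and apply the first long exact sequence of Lemma \ref{LZHL}. For $i\geq1$ the term $\ext^i(G,Q)$ vanishes by the base case and $\ext^{i+1}(G,K)$ vanishes by the inductive hypothesis (as $i+1\geq2$), so $\ext^i(G,M)=0$; here no circularity arises, because the obstruction is absorbed into $\ext^{\geq2}(G,K)$ of an object of strictly smaller $\xi$-projective dimension. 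For $\ext^0$ I combine monicity of $\theta$ with surjectivity of $\theta_M$, the latter holding because $\theta_M$ composed with $\mathcal{C}(G,Q)\to\mathcal{C}(G,M)$ equals the surjection $\ext^0(G,Q)\to\ext^0(G,M)$ precomposed with the isomorphism $\theta_Q$.

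The main obstacle is precisely the vanishing of $\ext^1(G,M)$ in the finite $\xi$-injective dimension case, where a direct long exact sequence argument is circular. The decisive input that removes it is that a syzygy $G'$ of the complete $\xi$-projective resolution of $G$ is again $\xi$-$\mathcal{G}$projective, which permits the dimension shift trading $\ext^1(G,M)$ for the already-vanishing group $\ext^2(G',M)$.
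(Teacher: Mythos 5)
The paper itself offers no proof of this lemma (it is imported from \citep[Lemma 3.5]{JZP}), so your argument stands or falls on its own. The vanishing part is fine: your base cases, the long-exact-sequence induction, and in particular the dimension shift $\xi\text{xt}_{\xi}^1(G,M)\cong\xi\text{xt}_{\xi}^2(G',M)$ along a $\mathcal{C}(-,\mathcal{P}(\xi))$-exact $\mathbb{E}$-triangle $G\to P\to G'\dashrightarrow$ with $G'\in\mathcal{GP}(\xi)$ are all correct, and that shift is indeed the right way to break the circularity you flag. The surjectivity of $\theta_M$ is also correctly extracted from the vanishing of the relevant $\xi\text{xt}_{\xi}^1$. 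The genuine gap is the injectivity of $\theta_M$, i.e.\ the identification $\xi\text{xt}_{\xi}^0(G,M)\cong\mathcal{C}(G,M)$. Your opening claim that ``$\theta$ is always monic'' because ``$\mathcal{C}(-,N)$ is left exact'' is false in a general extriangulated category: for an $\mathbb{E}$-triangle $A\xrightarrow{x}B\xrightarrow{y}C\dashrightarrow$ the induced sequence $\mathcal{C}(C,N)\xrightarrow{y^{*}}\mathcal{C}(B,N)\xrightarrow{x^{*}}\mathcal{C}(A,N)\to\mathbb{E}(C,N)$ is exact at $\mathcal{C}(B,N)$ and beyond, but not at $\mathcal{C}(C,N)$. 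When $\mathcal{C}$ is triangulated, the deflation $y$ in a non-split triangle is never an epimorphism (any map factoring through the connecting morphism $C\to A[1]$ lies in $\ker y^{*}$), so $\ker\theta_M=\ker\mathcal{C}(d_0,M)$ can a priori be nonzero. The same unproved injectivity reappears covariantly in your $\widehat{\mathcal{I}}(\xi)$ step: the ``left-exact sequence coming from $\mathcal{C}(G,-)$ on $M\to I\to L$'' is only exact at $\mathcal{C}(G,I)$, and what you need --- that the $\xi$-inflation $M\to I$ induces a monomorphism on $\mathcal{C}(G,-)$ --- is, given your surjectivity argument, exactly equivalent to the injectivity of $\theta_M$ you are trying to establish.

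Put differently, for $G\in\mathcal{GP}(\xi)$ the assertion $\xi\text{xt}_{\xi}^0(G,M)\cong\mathcal{C}(G,M)$ \emph{is} the assertion that $G$ sees the relevant conflations as left exact, so assuming left exactness of $\mathcal{C}(-,-)$ is circular. It does hold in your base cases, where the injectivity is supplied for free (for $M\in\mathcal{I}(\xi)$ by the trivial coresolution as in Lemma \ref{LZHL}; for $M\in\mathcal{P}(\xi)$ by the $\mathcal{C}(-,\mathcal{P}(\xi))$-exactness of the complete resolution of $G$), but the inductive steps require a separate argument --- for instance a second dimension shift in the first variable along $G\to P\to G'\dashrightarrow$, using that every morphism $G\to M$ extends to $P$ once one knows the cobase-changed extension class dies in $\xi\text{xt}_{\xi}^1(G',M)=0$ --- and nothing of this kind appears in your write-up.
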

	\begin{defn}\label{PGR}
	\emph{A proper $\Gproj$ resolution }of an object $A$ in $\Mcc$ is
	 a $\xi$-exact 
	complex $$\xymatrix{\cdots\ar[r]&G_1\ar[r]&G_0\ar[r]&A\ar[r]&0} (\text{for short we write by}\ \mathbf{G}\longrightarrow A)$$ 
	such that $G_i\in\GP$ and for any $i$, the relevant $\Mbe$-triangle $\xymatrix{K_{i+1}\ar[r]&G_i\ar[r]&K_i\ar@{-->}[r]&}$ ( set $K_0=A$ ) is $\Mcc(\GP,-)$-exact. This resolution is said to be of length $n$ if $G_n\neq0$ and $G_i=0$ for all $i > n$.
And we say that the complex 
\[
\cdots \longrightarrow G_2	 \longrightarrow G_1 \longrightarrow G_0 \longrightarrow 0
\]	
is the deleted complex for the proper $\Gproj$ resolution $\mathbf{G}\longrightarrow A$, which denoted by $\mathbf{G}^{\sharp}\rightarrow A$
\end{defn}
	
	Dually, for any object $B\in\Mcc$, we can define proper $\xi$-$\mathcal{G}$injective coresolution $B\longrightarrow \mathbf{H}$ and its deleted complex $B\longrightarrow \mathbf{H}_{\sharp}$.
	
	\begin{lem}[Horseshoe Lemma*]\label{HS*}
	Let $A\stackrel{x}{\longrightarrow}B\stackrel{y}{\longrightarrow}C\stackrel{\delta}\dashrightarrow$  be a $\Mcc(\GP,-)$-exact $\Mbe$-triangle in $\xi$. If $A$ has a proper $\Gproj$ resolution $\xymatrix{\mathbf{G_A}\ar[r]&A}$ and $C$ has a proper $\Gproj$ resolution $\xymatrix{\mathbf{G_C}\ar[r]&C}$, then there is a proper $\xi$-$\mathcal{G}$projective resolution $\xymatrix{\mathbf{G_B}\ar[r]&B}$ making the diagram below commutative:
	$$\xymatrix{
	\mathbf{G}_A\ar[d]\ar[r]^{x^{\bullet}}&\mathbf{G}_B\ar[d]\ar[r]^{y^{\bullet}}&\mathbf{G}_C\ar[d]\ar@{-->}[r]&\\
	A\ar[r]^x&B\ar[r]^y&C\ar@{-->}[r]&.
	}
	$$
	Moreover, $\xymatrix{G^n_A\ar[r]^{x^n}&G^n_B\ar[r]^{y^n}&G^n_C\ar@{-->}[r]&}$ is a split $\Mbe$-triangle, i.e. $G^n_B\simeq G^n_A\oplus G^n_C$ for any $n\geqslant0$.
	\end{lem}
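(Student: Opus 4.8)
The plan is to build $\mathbf{G}_B$ degreewise by setting $G^n_B:=G^n_A\oplus G^n_C$ and constructing the augmentation together with the differentials one syzygy at a time, so that the whole argument reduces to a single base step that is then iterated. Write the resolution $\mathbb{E}$-triangles of $\mathbf{G}_A\to A$ and $\mathbf{G}_C\to C$ as $K^A_{i+1}\to G^A_i\to K^A_i\dashrightarrow$ and $K^C_{i+1}\to G^C_i\to K^C_i\dashrightarrow$ (with $K^A_0=A$, $K^C_0=C$), each of which is $\Mcc(\GP,-)$-exact by Definition \ref{PGR}. Since $\GP$ is closed under finite direct sums, every $G^n_B$ lies in $\GP$, and the degreewise sequence $G^n_A\to G^n_B\to G^n_C$ is the split $\mathbb{E}$-triangle, which lies in $\xi$ because $\Delta_0\subseteq\xi$; this already secures the final ``moreover'' clause once the complex is in place.

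For the base step I first produce the degree-$0$ augmentation. Because $\delta$ is $\Mcc(\GP,-)$-exact and $G^C_0\in\GP$, the map $\Mcc(G^C_0,y)$ is surjective, so $\pi_C\colon G^C_0\to C$ factors through $y$; by Lemma \ref{FJ} this is equivalent to $\pi_C^{*}\delta=0$. I then apply Lemma \ref{BH}(1) to the pair of $\mathbb{E}$-triangles $\epsilon_C\colon K^C_1\to G^C_0\xrightarrow{\pi_C}C$ and $\delta\colon A\xrightarrow{x}B\xrightarrow{y}C$, which share the target $C$. The resulting object $M$ sits in two $\mathbb{E}$-triangles: the one realizing $\pi_C^{*}\delta$ is split, giving $M\cong A\oplus G^C_0$, while the other yields an $\mathbb{E}$-triangle $K^C_1\to A\oplus G^C_0\xrightarrow{\phi}B\dashrightarrow$ realizing $y^{*}\eta_C$ (where $\eta_C$ is the extension of $\epsilon_C$), with $\phi=[\,x\ \ \sigma\,]$ for a lift $\sigma$ of $\pi_C$; this triangle lies in $\xi$ since $\xi$ is closed under base change. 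Composing the deflation $\phi$ with the deflation $\pi_A\oplus 1_{G^C_0}\colon G^A_0\oplus G^C_0\to A\oplus G^C_0$ (whose kernel $\mathbb{E}$-triangle is the $\xi$-coproduct of $\epsilon_A$ with a split triangle) via $(\mathrm{ET4})$ produces the augmentation $\pi_B:=\phi\circ(\pi_A\oplus 1)=[\,x\pi_A\ \ \sigma\,]$, fitting in an $\mathbb{E}$-triangle $K^B_1\to G^B_0\xrightarrow{\pi_B}B\dashrightarrow$, together with a ``horseshoe'' $\mathbb{E}$-triangle $K^A_1\to K^B_1\to K^C_1\dashrightarrow$; both lie in $\xi$ by closure under base and cobase change and saturation. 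A direct check shows that $\pi_B$ makes the two degree-$0$ squares commute.

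Next I verify the exactness requirements at the Hom level. Applying $\Mcc(G,-)$ for $G\in\GP$ to the resulting $3\times 3$ diagram, the middle row splits while the bottom row and the two outer columns become short exact by $\Mcc(\GP,-)$-exactness of $\delta$, $\epsilon_A$ and $\epsilon_C$; a diagram chase (the nine lemma in $\mathbf{Ab}$) then shows that the middle column $0\to\Mcc(G,K^B_1)\to\Mcc(G,G^B_0)\to\Mcc(G,B)\to 0$ and the top row $0\to\Mcc(G,K^A_1)\to\Mcc(G,K^B_1)\to\Mcc(G,K^C_1)\to 0$ are both exact. The first says the new augmentation triangle is $\Mcc(\GP,-)$-exact, and the second says the horseshoe triangle is a $\Mcc(\GP,-)$-exact $\mathbb{E}$-triangle in $\xi$, which is exactly the input needed to repeat the construction. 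Since the tails of $\mathbf{G}_A$ and $\mathbf{G}_C$ are proper $\Gproj$ resolutions of $K^A_1$ and $K^C_1$, I iterate the base step with $(A,B,C)$ replaced by $(K^A_1,K^B_1,K^C_1)$, splice the syzygy triangles to define the differentials through $G^B_n\to K^B_n\hookrightarrow G^B_{n-1}$, and assemble the commutative ladder with split columns, obtaining the desired proper $\Gproj$ resolution $\mathbf{G}_B\to B$.

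The main obstacle is the base step: unlike the exact or module setting, one cannot form kernels directly, so each of the augmentation triangle $K^B_1\to G^B_0\to B$ and the horseshoe triangle $K^A_1\to K^B_1\to K^C_1$ must be produced through the extriangulated axioms (Lemma \ref{BH}(1) and $(\mathrm{ET4})$) and then shown to belong to $\xi$ using closure under base and cobase change and saturation; the standing hypotheses, including Condition \ref{WIC}, are what let these syzygy objects be manipulated as in the classical case. Once this geometric heart is settled, the Hom-level nine-lemma bookkeeping and the induction are routine.
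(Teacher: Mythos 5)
Your construction is essentially the same as the paper's: both exploit $\Mcc(\GP,-)$-exactness of $\delta$ to factor the augmentation $G_0^C\to C$ through $y$, set $G_0^B=G_0^A\oplus G_0^C$, produce the deflation $[\,x\pi_A\ \ \sigma\,]$ and the syzygy $\Mbe$-triangle $K_1^A\to K_1^B\to K_1^C$ via Lemma \ref{BH} and the $\mathrm{(ET4)}$-type axioms, verify membership in $\xi$ and $\Mcc(\GP,-)$-exactness by a Hom-level diagram chase, and iterate. The only divergence is bookkeeping: where you invoke closure of $\xi$-deflations under composition (a consequence of saturation) to place $K_1^B\to G_0^B\to B$ in $\xi$, the paper instead checks $\Mcc(\mathcal{P}(\xi),-)$-exactness and applies Lemma \ref{FBZH1}, and it uses Proposition \ref{INDE} together with cited lemmas from \cite{JDP} where you argue directly from $\mathrm{(ET4)^{op}}$ and the nine lemma.
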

	
	\begin{proof}
	The proof is similar to a part of proof for \citep[Theorem 4.16]{JDP}. For the convenience of readers, we give the proof here.
	
	Since $A$ has a proper $\Gproj$ resolution $\xymatrix{\mathbf{G_A}\ar[r]&A}$, then there exists a $\Mcc(\GP,-)$-exact $\mathbb{E}$-triangle $\xymatrix{K_1^A\ar[r]^{t_1^A}&G_0^A\ar[r]^{d_0^A}&A\ar@{-->}[r]^{\delta^A_0}&}$ in $\xi$ with $G_0^A\in\GP$. Similarly, there exists a $\Mcc(\GP,-)$-exact $\mathbb{E}$-triangle $\xymatrix{K_1^C\ar[r]^{t_1^C}&G_0^C\ar[r]^{d_0^C}&C\ar@{-->}[r]^{\delta^C_0}&}$ in $\xi$ with $G_0^C\in\GP$. By Lemma \ref{BH}, there exists following commutative diagram:
	$$
	\xymatrix{
	&K_1^C\ar@{=}[r]\ar[d]&K_1^C\ar[d]^{t_1^C}\\
	A\ar[r]\ar@{=}[d]&M\ar[r]\ar[d]&G_0^C\ar[d]^{d_0^C}\ar@{-->}[r]&\\
	A\ar[r]^x&B\ar@{-->}[d]\ar[r]^y&C\ar@{-->}[r]^{\delta}\ar@{-->}[d]^{\delta^C_0}&.\\
	&&
	}
	$$
	Since $\xymatrix{A\ar[r]^{x}&B\ar[r]^{y}&C\ar@{-->}[r]^{\delta}&}$ is $\Mcc(\GP,-)$-exact, there is a exact sequence
	$$
	0\longrightarrow\Mcc(G_0^C,A)\stackrel{\mathcal{C}(G_0^C,x)}\longrightarrow \Mcc(G_0^C,B)\stackrel{\mathcal{C}(G_0^C,y)}\longrightarrow\Mcc(G_0^C,C)\longrightarrow0
	$$
	in $\mathbf{Ab}$. So we have that $d_0^C$ factors through $y$. This implies $(d_0^C)^*\delta=(d_0^A)_*0=0$ by Lemma \ref{FJ}, hence there is an deflation $d_0^B : G_0^B=: G_0^A \oplus G_0^C \rightarrow B$ which makes the following diagram commutative
	$$
	\xymatrix{G_0^A\ar[r]^{\begin{tiny}\begin{bmatrix}
	1 \\
	0
	\end{bmatrix}\end{tiny}}\ar[d]^{d_0^A}&G_0^B\ar[r]^{\begin{tiny}\begin{bmatrix}
	0&1
	\end{bmatrix}\end{tiny}}\ar[d]^{d_0^B}&G_0^C\ar@{-->}^0[r]\ar[d]^{d_0^C}&\\
	A\ar[r]^x&B\ar[r]^y&C\ar@{-->}[r]^{\delta}&
	}
	$$
	by \citep[Lemma 4.15(2)]{JDP}. Hence there exists an $\Mbe$-triangle $\xymatrix{K_1^B\ar[r]^{t_1^B}&G_0^B\ar[r]^{d_0^B}&B\ar@{-->}[r]^{\delta_0^B}&}$. There exists a commutative diagram
	$$
	\xymatrix{
	K_1^A\ar[r]^{x_1}\ar[d]^{t_1^A}&K_1^B\ar[r]^{y_1}\ar[d]^{t_1^B}&K_1^C\ar@{-->}[r]^{\delta_1}\ar[d]^{t_1^B}&\\
	G_0^A\ar[r]^{\begin{tiny}\begin{bmatrix}
	1 \\
	0
	\end{bmatrix}\end{tiny}}\ar[d]^{d_0^A}&G_0^B\ar[r]^{\begin{tiny}\begin{bmatrix}
	0&1
	\end{bmatrix}\end{tiny}}\ar[d]^{d_0^B}&G_0^C\ar@{-->}^0[r]\ar[d]^{d_0^C}&\\
	A\ar[r]^x\ar@{-->}[d]^{\delta_0^A}&B\ar[r]^y\ar@{-->}[d]^{\delta_0^B}&C\ar@{-->}[r]^{\delta}\ar@{-->}[d]^{\delta_0^C}&\\
	&&&
	}
	$$
	made of $\Mbe$-triangles by \citep[Lemma 4.14]{JDP}. Since $\begin{tiny}\begin{bmatrix}
	1 \\
	0
	\end{bmatrix}\end{tiny}$ and $t_1^A$ are $\xi$-inflations, so is $x_1$ by Proposition \ref{INDE}(1). So the $\Mbe$-triangle
	$\xymatrix{K_1^A\ar[r]^{x_1}&K_1^B\ar[r]^{y_1}&K_1^C\ar@{-->}[r]^{\delta_1}&}$ is isomorphic to an $\Mbe$-triangle in $\xi$ by \citep[Corollary 3.6(3)]{HY}, hence it is an $\Mbe$-triangle in $\xi$. Applying functor $\Mcc(\mathcal{P}(\xi),-)$ to the above commutative diagram, it is easy to prove that the $\Mbe$-triangle $\xymatrix{K_0^B\ar[r]^{t_1^B}&G_0^B\ar[r]^{d_0^B}&B\ar@{-->}[r]^{\delta_0^B}&}$ is $\Mcc(\mathcal{P}(\xi),-)$-exact by a diagram chasing, hence it is an $\Mbe$-triangle in $\xi$ by Lemma \ref{FBZH1}. Applying functor $\Mcc(\GP,-)$ to the above commutative diagram, it is not difficult to show that the $\Mbe$-triangles $\xymatrix{K_0^B\ar[r]^{t_1^B}&G_0^B\ar[r]^{d_0^B}&B\ar@{-->}[r]^{\delta_0^B}&}$ and $\xymatrix{K_1^A\ar[r]^{x_1}&K_1^B\ar[r]^{y_1}&K_1^C\ar@{-->}[r]^{\delta_1}&}$  are $\Mcc(\GP,-)$-exact by a diagram chasing. Proceeding in this manner, one can get a $\Mcc(\GP,-)$-exact $\xi$-exact complex
	$$
	\xymatrix{
	\cdots\ar[r]&G_2^B\ar[r]&G_1^B\ar[r]&G_0^B\ar[r]&B\ar[r]&0
	}
	$$
	which is a proper $\Gproj$ resolution of $B$.
	\end{proof}

	\begin{thm}[Comparison Theorem]
		Let $f:A\rightarrow A'$ be a morphism in $\Mcc$, and let
		$$\xymatrix{
		\mathbf{A}:\cdots \ar[r]&G_2\ar[r]^{d_2}&G_1\ar[r]^{d_1}&G_0\ar[r]^{d_0}&A\ar[r]&0
		}
		$$
		be a $\Gproj$ resolution of $A$,
		$$\xymatrix{
		\mathbf{A'}:\cdots \ar[r]&G'_2\ar[r]^{d'_2}&G'_1\ar[r]^{d'_1}&G'_0\ar[r]^{d'_0}&'A\ar[r]&0
		}
		$$
		be a proper $\Gproj$ resolution of $A'$. Then
		
		(1) there exists a chain map $f^{\bullet}: \mathbf{A}\rightarrow \mathbf{A'}$ making the diagram below commutative:
		$$
		\xymatrix{
		\cdots\ar[r]&G_n\ar[r]^{d_{n}}\ar@{-->}[d]^{f_n}&G_{n-1}\ar[r]\ar@{-->}[d]^{f_{n-1}}&\cdots\ar[r]&G_1\ar[r]^{d_1}\ar@{-->}[d]^{f_1}&G_0\ar[r]^{d_0}\ar@{-->}[d]^{f_0}&A\ar[r]\ar[d]^{f}&0\\
		\cdots\ar[r]&G'_n\ar[r]^{d_n}&G'_{n-1}\ar[r]&\cdots\ar[r]&G'_1\ar[r]^{d'_1}&G'_0\ar[r]^{d'_0}&A'\ar[r]&0
		}
		$$
		
		(2) the morphism $f^{\bullet}$ in $\mathbf{C}(\Mcc)$  constructed in (1) is unique up to homotopy.
		\end{thm}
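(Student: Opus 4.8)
The plan is to run the usual degreewise inductions that build the comparison map (for (1)) and the null-homotopy (for (2)), the whole point being that every lifting and factorization we need is supplied by applying $\Mcc(\GP,-)$ to the resolution $\Mbe$-triangles of $\mathbf{A'}$, which are $\Mcc(\GP,-)$-exact by hypothesis. Write the resolution $\Mbe$-triangles of $\mathbf{A}$ as $K_{n+1}\xrightarrow{a_{n+1}}G_n\xrightarrow{b_n}K_n\dashrightarrow$ with $K_0=A$ and $d_n=a_nb_n$, and those of $\mathbf{A'}$ as $K'_{n+1}\xrightarrow{a'_{n+1}}G'_n\xrightarrow{b'_n}K'_n\dashrightarrow$ with $K'_0=A'$ and $d'_n=a'_nb'_n$ (consistent with Definition 4.4). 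Since each $G_n$ lies in $\GP$ and each triangle of $\mathbf{A'}$ is $\Mcc(\GP,-)$-exact, applying $\Mcc(G_n,-)$ to $K'_{m+1}\xrightarrow{a'_{m+1}}G'_m\xrightarrow{b'_m}K'_m\dashrightarrow$ gives a \emph{short exact} sequence of abelian groups for every $m$; in particular $\Mcc(G_n,b'_m)$ is surjective and $\Mcc(G_n,a'_{m+1})$ is injective. These two facts drive the entire argument.

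For part (1) I would construct, by induction on $n$, morphisms $f_n:G_n\to G'_n$ together with syzygy morphisms $\bar f_n:K_n\to K'_n$, starting from $\bar f_0=f$. Given $\bar f_n$, surjectivity of $\Mcc(G_n,b'_n)$ lets me lift $\bar f_nb_n:G_n\to K'_n$ to some $f_n:G_n\to G'_n$ with $b'_nf_n=\bar f_nb_n$. Then $b'_n(f_na_{n+1})=\bar f_n(b_na_{n+1})=0$, so by the weak kernel property of the inflation $a'_{n+1}$ (a standard property of $\Mbe$-triangles) the map $f_na_{n+1}$ factors as $a'_{n+1}\bar f_{n+1}$, defining $\bar f_{n+1}$. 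The identities $d'_nf_n=a'_nb'_nf_n=a'_n\bar f_nb_n=f_{n-1}a_nb_n=f_{n-1}d_n$ (and $d'_0f_0=b'_0f_0=\bar f_0b_0=fd_0$ at the bottom) show that $f^{\bullet}$ is a chain map lifting $f$.

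For part (2) it suffices to show that a chain map $h^{\bullet}$ lifting the zero map (the difference of two lifts of $f$) is null-homotopic, i.e.\ to produce $s_n:G_n\to G'_{n+1}$ with $h_n=d'_{n+1}s_n+s_{n-1}d_n$ and $s_{-1}=0$. Inductively I will arrange that $b'_n(h_n-s_{n-1}d_n)=0$: for $n=0$ this reads $b'_0h_0=d'_0h_0=h_{-1}d_0=0$, while for $n\ge 1$ a direct computation using $d'_nh_n=h_{n-1}d_n$, the inductive relation $h_{n-1}=d'_ns_{n-1}+s_{n-2}d_{n-1}$, and $d_{n-1}d_n=0$ yields only $a'_n\big(b'_n(h_n-s_{n-1}d_n)\big)=0$. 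This is the one genuinely extriangulated point: inflations need not be monic, so I cannot cancel $a'_n$ naively; instead I invoke the \emph{injectivity} of $\Mcc(G_n,a'_n)$ to conclude $b'_n(h_n-s_{n-1}d_n)=0$. The exactness then lets me factor $h_n-s_{n-1}d_n=a'_{n+1}\psi_n$ through $a'_{n+1}$, and surjectivity of $\Mcc(G_n,b'_{n+1})$ lets me lift $\psi_n$ to $s_n:G_n\to G'_{n+1}$ with $b'_{n+1}s_n=\psi_n$, whence $d'_{n+1}s_n=a'_{n+1}\psi_n=h_n-s_{n-1}d_n$, completing the induction.

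I expect the main obstacle to be exactly this replacement of the classical ``inflations are monic'' step by the injectivity of $\Mcc(G_n,a'_n)$; this is where the properness (the $\Mcc(\GP,-)$-exactness) of the target resolution $\mathbf{A'}$ is indispensable, whereas the source $\mathbf{A}$ enters only through $G_n\in\GP$. Everything else is the same bookkeeping as in the module- or triangulated-category proofs.
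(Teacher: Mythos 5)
Your proposal is correct and takes essentially the same route as the paper: both proofs rest on the short exact sequences $0\to\Mcc(G,K'_{i+1})\to\Mcc(G,G'_i)\to\Mcc(G,K'_i)\to0$ obtained from the $\Mcc(\GP,-)$-exactness of the target resolution, construct $f^{\bullet}$ degree by degree, and build the homotopy in (2) by invoking the injectivity of $\Mcc(G_n,a'_n)$ precisely where the classical argument would cancel a monomorphism. The only cosmetic difference is that the paper produces the syzygy morphisms in part (1) via $(\mathrm{ET3})^{\mathrm{op}}$, whereas you obtain them from the weak-kernel/exactness property of the inflation; both are valid.
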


		\begin{proof}
		(1) Since $\mathbf{A}$ is the $\Gproj$ resolution of $A$, then there exists an $\Mbe$-triangle
		$$\xymatrix{K_{n+1}\ar[r]^{x_{n+1}}&G_n\ar[r]^{y_n}&K_n\ar@{-->}[r]^{\delta_n}&}$$
		in $\xi$ for all $n\geqslant0$ with $x_ny_n=d_n$, $K_0=A$, $y_0=d_0$.
		
		Note that $\mathbf{A'}$ is the proper $\Gproj$ resolution of $A'$, then there exists a $\Mcc(\GP,-)$-exact $\Mbe$-triangle
		$$\xymatrix{K'_{n+1}\ar[r]^{x'_{n+1}}&G'_n\ar[r]^{y'_n}&K'_n\ar@{-->}[r]^{\delta'_n}&}$$
		in $\xi$ for all $n\geqslant0$ with $x'_ny'_n=d'_n$, $K'_0=A'$, $y'_0=d'_0$. So for any integer $n\geqslant 0$ and any $G\in\GP$, one can get a short exact sequence
		$$
		\xymatrix{
		0\ar[r]&\Mcc(G,K'_{i+1})\ar[r]^{\Mcc(G,x'_{i+1})}&\Mcc(G,G'_i)\ar[r]^{\Mcc(G,y'_i)}&\Mcc(G,K'_i)\ar[r]&0}\quad(*)
		$$
		in $\mathbf{Ab}$ for all $i\geqslant0$. Take $G=G_0,i=0$ in $(*)$ and $fd_0\in\Mcc(G_0,A')=\Mcc(G_0,K'_0)$, then there exists a morphism $f_0\in \Mcc(G_0,G'_0)$ such that $$fd_0=\Mcc(G_0,y'_0)f_0=\Mcc(G_0,d'_0)f_0=d'_0f_0.$$
		By $\rm(ET3)^{op}$, there exists a  a morphism $g_1\in \Mcc(K_1,K'_1)$ making the diagram below commutative:
		$$
		\xymatrix{
		K_1\ar[r]^{x_1}\ar@{-->}[d]^{g_1}&G_0\ar[r]^{d_0}\ar[d]^{f_0}&A\ar@{-->}[r]^{\delta_1}\ar[d]^f&\\
		K'_1\ar[r]^{x'_1}&G'_0\ar[r]^{d'_0}&A'\ar@{-->}[r]^{\delta'_1}&.
		}
		$$
		Assume that there exist $f_0\text{,}f_1\text{,}\cdots\text{,}f_n$ and
		 $g_0=f\text{,}g_1\text{,}\cdots\text{,}g_{n}\text{,}g_{n+1}$
		 which make the diagrams below commutative:
		$$
		\xymatrix{
		K_{i+1}\ar[r]^{x_{i+1}}\ar[d]^{g_{i+1}}&G_i\ar[r]^{y_i}\ar[d]^{f_i}&K_i\ar@{-->}[r]^{\delta_i}\ar[d]^{g_i}&\\
		K'_{i+1}\ar[r]^{x'_{i+1}}&G'_i\ar[r]^{y'_i}&K'_i\ar@{-->}[r]^{\delta'_i}&
		}
		$$
		for $i=0\text{,}\cdots\text{,}n$.
		Then one can get that
		$$d'_if_i=x'_1y'_if_i=x'_1g_iy_i=f_{i-1}x_iy_i=f_{i-1}d_i,\  i=1,\cdots n.$$
		Take $G=G_{n+1},i=n+1$ in $(*)$ and $g_{n+1}y_{n+1}\in\Mcc(G_{n+1},K'_{n+1})$, there exists a morphism $f_{n+1}\in\Mcc(G_{n+1},G'_{n+1})$ such that
		$$
		d'_{n+1}f_{n+1}=x'_{n+1}y'_{n+1}f_{n+1}=x'_{n+1}g_{n+1}y_{n+1}=f_nx_{n+1}y_{n+1}=f_nd_{n+1}.
		$$
		By induction, we have proved the existence of the $f^{\bullet}$.
		
		(2) Let $f'^{\bullet}$ be a another chain map from $\mathbf{A}$ to $\mathbf{A'}$. Take $G=G_0,i=0$ in $(*)$ and $d'_0(f_0-f'_0)=fd_0-fd_0=0$, then $f_0-f'_0\in$Ker $(\Mcc(G_0,d'_0))=$ Im $(\Mcc(G_0,x'_1))$. So there exists a morphism $t_0\in\Mcc(G_0,K'_1)$ such that $x'_1t_0=f_0-f'_0$. Take $G=G_0,i=1$ in $(*)$, then there exists a morphism $s_0\in\Mcc(G_0,G'_1)$ such that $t_0=y'_1s_0$ and
		$$f_0-f'_0=x'_1t_0=x'_1y'_1s_0=d'_1s_0.$$
		Let $h_1=f_1-f'_1-s_0d_1\in\Mcc(G_1,G'_1)$, then
		$$
		(\Mcc(G_1,x'_1)\circ\Mcc(G_1,y'_1))(h_1)=x'_1y'_1h_1=x'_1y'_1(f_1-f'_1-s_0d_1)=0.
		$$
		Since $\Mcc(G_1,x'_1)$ is a monomorphism, then $\Mcc(G_1,y'_1))(h_1)=0$ 
		i.e. $$ h_1\in \text{Ker}\ \Mcc(G_1,y'_1)= \text{Im}\ \Mcc(G_1,x'_2).$$
		So there exists a morphism $t_1\in\Mcc(G_1,K'_2)$ such that $h_1=x'_2t_1$.
		
		And take $G=G_1,i=2$ in $(*)$, then there exists a morphism $s_1\in\Mcc(G_1,G'_2)$ with $t_1=\Mcc(G_1,y'_2)(s_1)=y'_2s_1$. So we have
		$$
		d'_2s_1+s_0d_1=x'_2y'_2s_1+s_1d_1=x'_2t_1+s_0d_1=h_1+s_0d_1=f_1-f'_1.
		$$
		Proceeding this manner, one can get $\{s_n\}_{n\geqslant0}$ such that
		$$
		f_0-f'_0=d'_0s_0,\  f_n-f'_n=d'_{n+1}s_n+s_{n-1}d_n,\  n\geqslant 1.
		$$
		This is enough to say that $f^{\bullet}$ is homotopic to $f'^{\bullet}$.
		\end{proof}

		\begin{rem}\label{Homotopy}
			Any two proper $\Gproj$ resolution of any object $A\in\Mcc$ are homotopy equivalent. Dually, Any two proper $\Ginj$ coresolution of any object $A\in\Mcc$ are homotopy equivalent.
			\end{rem}
			
			We use $\mathbf{Res}(\GP)$ ( resp. $\mathbf{Res}( \mathcal{GI}(\xi))$ ) to denote the full subcategory of $\Mcc$ consisting of those objects that have a proper $\Gproj$ resolution ( resp. proper $\Ginj$ coresolution ).
			
			\begin{defn}
			Let $A$, $B$ be two objects in $\Mcc$.

			(1)If $A\in \mathbf{Res}(\GP)$, then $A$ has a proper $\Gproj$ 
			resolution $\xymatrix{\mathbf{G}\ar[r]&A}$. For any $i\geqslant0$, 
			we define $\extgp^n(A,B)$ to be 
			the $n$th cohomology of the induced complex $\Mcc({G}^{\sharp},B)$ i.e.
			\[
				\extgp^n(A,B):=H^n(\Mcc({G}^{\sharp},B)),n\geqslant 0.
			\]

			(2)If $B\in \mathbf{Res}(\GI)$, then $B$ has a proper $\Ginj$ coresolution $\xymatrix{{B}\ar[r]&\mathbf{H}}$. For any $i\geqslant0$, 
			we define $\extgi^n(A,B)$ to be the $n$th cohomology of the induced complex $\Mcc(A,\mathbf{H}_{\sharp})$ i.e.
			\[
				\extgi^n(A,B):=H^n(\Mcc(A,{H}_{\sharp})),n\geqslant 0.
			\]	
		\end{defn}
			
			It is easy to see that $\extgp^n(-,-)$ and $\extgi^n(-,-)$ are well defined by Remark \ref{Homotopy}.
			
			\begin{prop}
			Let $X$ be an object in $\Mcc$, then  
			
			(1)if $G\in\GP$, then $\extgp^i(G,X)=0$ for any $i\geqslant 1$.
			
			(2)if $Q\in\GI$, then $\extgi^i(X,Q)=0$ for any $i\geqslant 1$.
			\end{prop}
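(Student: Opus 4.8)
The plan is to exploit the observation that a $\xi$-$\mathcal{G}$projective object serves as its own proper $\Gproj$ resolution, so that the complex computing $\extgp^i(G,X)$ collapses to a single term in degree $0$. Concretely, for $G\in\GP$ I would take the one-term complex
$$\xymatrix{\cdots\ar[r]&0\ar[r]&0\ar[r]&G\ar[r]^{1_G}&G\ar[r]&0}$$
as a candidate proper $\Gproj$ resolution of $G$, with $G_0=G$, and $G_i=0$, $K_i=0$ for $i\geqslant1$, and $K_0=G$.

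First I would verify that this is a legitimate proper $\Gproj$ resolution in the sense of Definition \ref{PGR}. Each resolution $\Mbe$-triangle $K_{i+1}\to G_i\to K_i\dashrightarrow$ is split: for $i=0$ it is $0\to G\xrightarrow{1_G}G\dashrightarrow$, and for $i\geqslant1$ it is $0\to0\to0\dashrightarrow$. Since $\Delta_0\subseteq\xi$ by Definition \ref{ZL}, every such triangle lies in $\xi$, so the complex is $\xi$-exact; and each is trivially $\Mcc(\GP,-)$-exact, because applying $\Mcc(W,-)$ for $W\in\GP$ either returns the identity on $\Mcc(W,G)$ or the zero sequence. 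Thus $G\in\mathbf{Res}(\GP)$ and this resolution is admissible for computing $\extgp$. The deleted complex $\mathbf{G}^{\sharp}$ is then $G$ concentrated in degree $0$, so $\Mcc(\mathbf{G}^{\sharp},X)$ is the cochain complex with $\Mcc(G,X)$ in degree $0$ and $0$ in every positive degree. Reading off cohomology gives $\extgp^i(G,X)=H^i(\Mcc(\mathbf{G}^{\sharp},X))=0$ for all $i\geqslant1$. Because $\extgp^\bullet(G,X)$ is independent of the chosen proper $\Gproj$ resolution by Remark \ref{Homotopy}, this computation with the trivial resolution delivers the value of the functor and settles (1).

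Part (2) I would handle by the evident dualization: a $\xi$-$\mathcal{G}$injective object $Q\in\GI$ admits the trivial proper $\Ginj$ coresolution $0\to Q\xrightarrow{1_Q}Q\to0$, whose deleted complex $\mathbf{H}_{\sharp}$ is $Q$ in degree $0$; applying $\Mcc(X,-)$ yields a complex concentrated in degree $0$, whence $\extgi^i(X,Q)=0$ for $i\geqslant1$. I do not anticipate a genuine obstacle here: the only substantive point is confirming that these one-term complexes really qualify as proper resolutions, and that reduces entirely to the inclusion $\Delta_0\subseteq\xi$ together with the triviality of $\Mcc(\GP,-)$-exactness (resp.\ $\Mcc(-,\GI)$-exactness) for split $\Mbe$-triangles.
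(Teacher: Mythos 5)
Your proof is correct and takes essentially the same approach as the paper: both write down an explicit proper $\Gproj$ resolution of $G$ consisting of copies of $G$, apply $\Mcc(-,X)$ to the deleted complex, read off the vanishing of cohomology in positive degrees, and appeal to independence of the chosen resolution. The only (cosmetic) difference is that you use the length-zero resolution $0\to G\xrightarrow{1_G}G\to 0$, whereas the paper uses the infinite periodic complex $\cdots\to G\xrightarrow{1}G\xrightarrow{0}G\xrightarrow{1}G\to 0$; both are admissible, and your choice is the cleaner one.
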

\begin{proof}
	(1) In fact, the $\xi$-exact complex
	\[\cdots\longrightarrow G\stackrel{1}\longrightarrow G\stackrel{0}\longrightarrow G\stackrel{1}\longrightarrow G\stackrel{0}\longrightarrow G\stackrel{1}\longrightarrow G\longrightarrow0
	\]
	is a proper $\Gproj$ resolution of $G$, and its deleted complex induced a complex 
	\[0\rightarrow \Mcc(G,X)\stackrel{0}\rightarrow \Mcc(G,X)\stackrel{0}
	\rightarrow \Mcc(G,X)\stackrel{1}\rightarrow \Mcc(G,X)\stackrel{0}\rightarrow
	 \Mcc(G,X)\stackrel{1}\rightarrow \Mcc(G,X)\rightarrow\cdots
	\]
	in $\mathbf{Ab}$. Then (1) is obvious by definition.
	
	(2) It is the dual of (1).

\end{proof}

			\begin{prop} Let $A$ and $B$ be any objects in $\Mcc$, then
			
				(1) if $A\in\widehat{\mathcal{P}}(\xi)$, then  $\extgp^i(A,B)\backsimeq \ext^n(A,B)$, $n\geqslant 0$;
			
			(2) if $B\in\widehat{\mathcal{I}}(\xi)$, then $\extgi^i(A,B)\backsimeq \ext^n(A,B)$, $n\geqslant 0$.
			\end{prop}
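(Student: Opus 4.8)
The plan is to establish (1) by observing that, once $A$ has finite $\xi$-projective dimension, \emph{any} finite $\xi$-projective resolution of $A$ already qualifies as a proper $\Gproj$ resolution; the two functors are then computed from one and the same complex. Statement (2) is proved by the dual argument.

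First I would fix $n=\xi\text{-pd}\,A$ and choose a $\xi$-projective resolution of length $n$,
$$
0\longrightarrow P_n\longrightarrow\cdots\longrightarrow P_1\longrightarrow P_0\longrightarrow A\longrightarrow0,
$$
with resolution $\Mbe$-triangles $\xymatrix{K_{i+1}\ar[r]&P_i\ar[r]&K_i\ar@{-->}[r]&}$ in $\xi$ and $K_0=A$. Each $P_i$ lies in $\GP$, since every $\xi$-projective object is $\xi$-$\mathcal{G}$projective (the alternating complex $\cdots\longrightarrow P_i\stackrel{1}\longrightarrow P_i\stackrel{0}\longrightarrow P_i\stackrel{1}\longrightarrow P_i\longrightarrow\cdots$ is a complete $\xi$-projective resolution exhibiting $P_i$ as one of its syzygies). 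So the only point needing verification is the extra requirement in Definition \ref{PGR}: that every resolution $\Mbe$-triangle be $\Mcc(\GP,-)$-exact.

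To check this I would use that all syzygies of a \emph{finite} resolution again have finite $\xi$-projective dimension, whence $K_{i+1},P_i,K_i\in\widehat{\mathcal{P}}(\xi)$. Fixing $W\in\GP$ and applying the long exact sequence of Lemma \ref{LZHL} to the triangle $\xymatrix{K_{i+1}\ar[r]&P_i\ar[r]&K_i\ar@{-->}[r]&}$, Lemma \ref{ZHL} supplies the identifications $\ext^0(W,-)\cong\Mcc(W,-)$ on objects of finite $\xi$-projective dimension together with $\ext^1(W,K_{i+1})=0$. Substituting these into the long exact sequence collapses its initial segment to
$$
0\longrightarrow\Mcc(W,K_{i+1})\longrightarrow\Mcc(W,P_i)\longrightarrow\Mcc(W,K_i)\longrightarrow0,
$$
which is exactly the $\Mcc(\GP,-)$-exactness sought. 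Hence the chosen $\xi$-projective resolution is a proper $\Gproj$ resolution of $A$, and in particular $A\in\mathbf{Res}(\GP)$.

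Finally, because $\extgp^n(A,B)$ is independent of the chosen proper $\Gproj$ resolution (Remark \ref{Homotopy}), I would compute it from this one. Its deleted complex $\mathbf{P}^{\sharp}$ coincides with the deleted $\xi$-projective resolution of $A$, so
$$
\extgp^n(A,B)=H^n(\Mcc(\mathbf{P}^{\sharp},B))=\xi\text{xt}^n_{\P}(A,B)\cong\ext^n(A,B),
$$
the final isomorphism being the definition of $\ext^n$. Part (2) then follows dually, replacing $\xi$-projective resolutions, syzygies and Lemma \ref{ZHL} by $\xi$-injective coresolutions, cosyzygies and its dual, and invoking the other long exact sequence of Lemma \ref{LZHL}. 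The only real obstacle is the $\Mcc(\GP,-)$-exactness step, and it rests entirely on finiteness of the dimension: that is what lets Lemma \ref{ZHL} apply simultaneously to every syzygy, an input that would be unavailable for an infinite resolution.
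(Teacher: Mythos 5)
Your proposal is correct and follows essentially the same route as the paper: take a finite $\xi$-projective resolution, note that every syzygy has finite $\xi$-projective dimension, and use Lemma \ref{ZHL} (via the long exact sequence of Lemma \ref{LZHL}) to see that each resolution $\Mbe$-triangle is $\Mcc(\GP,-)$-exact, so the resolution is already a proper $\Gproj$ resolution and both functors are computed from the same complex. Your write-up is in fact slightly more careful than the paper's, since you make explicit both why each $P_i\in\GP$ and how the long exact sequence collapses to the required short exact sequence.
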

			
			\begin{proof}
			Since (2) is the dual of (1), so we only need to prove (1).
			
			Let $\xi$-pd$A=n$, then there exists a $\xi$-projective resolution of $A$ with length $n$:
			$$
			\xymatrix
			{
			 P_n\ar[r]&P_{n-1}\ar[r]&\cdots\ar[r]&P_1\ar[r]&P_0\ar[r]&A\ar[r]&0.\qquad (**)
			}
			$$
			For the relevant $\mathbb{E}$-triangle $\xymatrix{ K_{i+1}\ar[r]&P_i\ar[r]&K_i\ar@{-->}[r]&}$ ( set $K_0=A$ ) in $\xi$, all items have finite $\xi$-projective dimension.
			Let $G$ be any $\Gproj$ objects of $\Mcc$. There exists a short exact sequence
			$$0\rightarrow \Mcc(G,K_{i+1})\rightarrow \Mcc(G,P_i) \rightarrow \Mcc(G,K_{i})\rightarrow 0 $$
			by Lemma \ref{ZHL}. This implies that the $\xi$-projective resolution of
			 $A$ is $\Mcc(\GP,-)$-exact, and indeed a proper $\Gproj$ resolution. 
			 So we have $$\extgp^n(A,B)\backsimeq \ext^n(A,B)$$
			by definition.
			\end{proof}
			\begin{cor}\label{JYT}
				Let $A$ and $B$ be any objects in $\Mcc$. 
			
				(1) If $A\in\widehat{\mathcal{P}}(\xi)$, there exists a $\Mcc(-,\GI)$-exact proper $\Gproj$ resolution of $A$;  
			
			(2) If $B\in\widehat{\mathcal{I}}(\xi)$,  there exists a $\Mcc(\GP,-)$-exact proper $\mathcal{G}$injective coresolution of $B$.
		\end{cor}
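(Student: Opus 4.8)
The plan is to take the finite $\xi$-projective resolution of $A$ supplied by the hypothesis $A\in\widehat{\mathcal{P}}(\xi)$ and to verify that it is simultaneously $\Mcc(-,\GI)$-exact. First I would recall that, by the preceding proposition, if $\xi$-pd$\,A=n$ then the length-$n$ $\xi$-projective resolution
\[
\xymatrix{P_n\ar[r]&P_{n-1}\ar[r]&\cdots\ar[r]&P_0\ar[r]&A\ar[r]&0}
\]
is already a proper $\Gproj$ resolution of $A$, with resolution $\Mbe$-triangles $\xymatrix{K_{i+1}\ar[r]&P_i\ar[r]&K_i\ar@{-->}[r]&}$ in $\xi$ (where $K_0=A$). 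The structural observation to exploit is that each syzygy $K_i$ again has finite $\xi$-projective dimension, since the truncated tail $\xymatrix{P_n\ar[r]&\cdots\ar[r]&P_i\ar[r]&K_i\ar[r]&0}$ is a finite $\xi$-projective resolution of $K_i$; hence $K_i,P_i,K_{i+1}\in\widehat{\mathcal{P}}(\xi)$ for every $i$.

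Next I would fix an arbitrary $Q\in\GI$ and invoke the dual of Lemma \ref{ZHL}: since $Q$ is $\Ginj$ and each of $K_i,P_i,K_{i+1}$ lies in $\widehat{\mathcal{P}}(\xi)$, the natural map $\Mcc(-,Q)\to\ext^0(-,Q)$ is an isomorphism on these objects and $\ext^j(K_i,Q)=0$ for all $j\geqslant1$. This is exactly Lemma \ref{ZHL} read in the opposite extriangulated category, in which $\GI$ plays the role of $\GP$ and $\widehat{\mathcal{P}}(\xi)$ that of $\widehat{\mathcal{I}}(\xi)$. Feeding these facts into the contravariant long exact sequence of Lemma \ref{LZHL} attached to $\xymatrix{K_{i+1}\ar[r]&P_i\ar[r]&K_i\ar@{-->}[r]&}$, the term $\ext^1(K_i,Q)$ vanishes and all three $\ext^0$-groups collapse onto the corresponding $\Mcc(-,Q)$-groups, so that comparing the two rows through the natural isomorphism $\Mcc(-,Q)\cong\ext^0(-,Q)$ yields the exact sequence
\[
\xymatrix{0\ar[r]&\Mcc(K_i,Q)\ar[r]&\Mcc(P_i,Q)\ar[r]&\Mcc(K_{i+1},Q)\ar[r]&0}.
\]
As $Q\in\GI$ was arbitrary, every resolution $\Mbe$-triangle is $\Mcc(-,\GI)$-exact, which is precisely the statement that the resolution above is a $\Mcc(-,\GI)$-exact proper $\Gproj$ resolution of $A$. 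This proves (1).

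For (2) I would run the same argument in $\mathcal{C}^{op}$: a finite $\xi$-injective coresolution of $B\in\widehat{\mathcal{I}}(\xi)$ is a proper $\Ginj$ coresolution by the dual of the preceding proposition, its cosyzygies all lie in $\widehat{\mathcal{I}}(\xi)$, and applying Lemma \ref{ZHL} directly (with a $\Gproj$ object $G$ in the first slot and these finite-dimensional cosyzygies in the second) forces $\Mcc(\GP,-)$-exactness of its coresolution $\Mbe$-triangles.

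I expect the only delicate point to be the precise formulation and justification of the dual of Lemma \ref{ZHL}; once the identification of $\Mcc(-,Q)$ with $\ext^0(-,Q)$ together with the vanishing of the higher $\ext$-groups is secured for objects of finite $\xi$-projective dimension against a $\Ginj$ object, the conclusion is merely read off from the long exact sequence and presents no further obstacle.
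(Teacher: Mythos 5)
Your argument is correct and is exactly the route the paper intends: the corollary is left unproved there, but it follows from the preceding proposition (the finite $\xi$-projective resolution is already a proper $\Gproj$ resolution) combined with the dual of Lemma \ref{ZHL} applied to the syzygies, which all lie in $\widehat{\mathcal{P}}(\xi)$. The same computation appears verbatim inside the paper's proof of Theorem \ref{GEXT} (the short exact sequence $0\to\Mcc(K_1,Q)\to\Mcc(P_1,Q)\to\Mcc(K_2,Q)\to 0$ for $Q\in\GI$), so your proposal matches the paper's approach.
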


			\begin{thm}
			Let $\xymatrix{A\ar[r]^x&B\ar[r]^y&C\ar@{-->}[r]^{\delta}&}$ be a $\Mcc(\GP,-)$-exact $\Mbe$-triangle in $\xi$ with $A\in \mathbf{Res}(\GP)$ and $C\in\mathbf{Res}(\GP)$, then for any object $X\in\Mcc$, there is always a natural connecting homomorphism
			$$
			\xymatrix{
			\partial^n_X : \extgp^n(A,X)\ar[r]&\extgp^{n+1}(C,X),\ n\geqslant0
			}
			$$
			making the following sequence
			\[
			0\rightarrow {\extgp^0(C,X)}\rightarrow {\extgp^0(B,X)}\rightarrow{\extgp^0(A,X)}\stackrel{\partial_X^0}\rightarrow{\extgp^1(C,X)}\rightarrow\cdots
			\]
			exact.
			\end{thm}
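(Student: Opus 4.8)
The plan is to deduce the long exact sequence from the classical zig-zag (snake) lemma applied to a suitable short exact sequence of cochain complexes. First I would apply the Horseshoe Lemma (Lemma \ref{HS*}) to the given $\Mcc(\GP,-)$-exact $\Mbe$-triangle $\xymatrix{A\ar[r]^x&B\ar[r]^y&C\ar@{-->}[r]^{\delta}&}$ in $\xi$: since $A,C\in\mathbf{Res}(\GP)$ possess proper $\Gproj$ resolutions $\mathbf{G}_A\to A$ and $\mathbf{G}_C\to C$, the lemma produces a proper $\Gproj$ resolution $\mathbf{G}_B\to B$ together with a commutative diagram of complexes
$$
\xymatrix{
\mathbf{G}_A\ar[d]\ar[r]^{x^{\bullet}}&\mathbf{G}_B\ar[d]\ar[r]^{y^{\bullet}}&\mathbf{G}_C\ar[d]\ar@{-->}[r]&\\
A\ar[r]^x&B\ar[r]^y&C\ar@{-->}[r]&
}
$$
in which, for every $n\geqslant 0$, the row $\xymatrix{G^n_A\ar[r]^{x^n}&G^n_B\ar[r]^{y^n}&G^n_C\ar@{-->}[r]&}$ is a split $\Mbe$-triangle, so that $G^n_B\simeq G^n_A\oplus G^n_C$. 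In particular $B\in\mathbf{Res}(\GP)$, whence $\extgp^n(B,X)$ is defined.

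Next I would apply the contravariant additive functor $\Mcc(-,X)$ to the deleted complexes. Because each row $G^n_A\to G^n_B\to G^n_C$ is a split $\Mbe$-triangle, it is a biproduct diagram in $\Mcc$ (by the ``in particular'' part of Lemma \ref{FJ}), and every additive functor preserves it; hence $0\to \Mcc(G^n_C,X)\to \Mcc(G^n_B,X)\to \Mcc(G^n_A,X)\to 0$ is (split) exact for each $n$. Since $x^{\bullet}$ and $y^{\bullet}$ are chain maps, these assemble into a short exact sequence of cochain complexes
$$
0\longrightarrow \Mcc(\mathbf{G}_C^{\sharp},X)\longrightarrow \Mcc(\mathbf{G}_B^{\sharp},X)\longrightarrow \Mcc(\mathbf{G}_A^{\sharp},X)\longrightarrow 0.
$$

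Finally I would invoke the zig-zag lemma for this short exact sequence of cochain complexes, which furnishes the connecting homomorphisms and the associated long exact sequence in cohomology. Identifying $H^n(\Mcc(\mathbf{G}_A^{\sharp},X))=\extgp^n(A,X)$ and likewise for $B$ and $C$ --- an identification independent of the chosen proper $\Gproj$ resolution by Remark \ref{Homotopy} --- yields exactly the asserted sequence, beginning with $0\to\extgp^0(C,X)\to\extgp^0(B,X)\to\extgp^0(A,X)\stackrel{\partial_X^0}{\rightarrow}\extgp^1(C,X)\to\cdots$, the initial $0$ arising because the complexes are concentrated in non-negative degrees. The map $\partial^n_X$ is the connecting homomorphism produced by the zig-zag lemma, and its naturality (in $X$, and with respect to morphisms of the triangle) is inherited from the naturality of that construction. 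The only point requiring genuine care is the very first step: one must guarantee that the three resolutions can be chosen so that the rows split degreewise, which is precisely what the Horseshoe Lemma* delivers. Everything after that is formal homological algebra, carried out verbatim as in the module or triangulated setting.
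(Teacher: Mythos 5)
Your proposal is correct and follows essentially the same route as the paper's own proof: apply the Horseshoe Lemma (Lemma \ref{HS*}) to obtain a degreewise split short exact sequence of deleted proper $\Gproj$ resolutions, apply the additive functor $\Mcc(-,X)$ to get a short exact sequence of cochain complexes, and conclude by the long exact (zig-zag) sequence in cohomology. Your added justifications --- that degreewise splitness guarantees exactness after applying $\Mcc(-,X)$, and that Remark \ref{Homotopy} makes the identification with $\extgp^n$ independent of choices --- are correct and slightly more careful than the paper's terser wording.
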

			
			\begin{proof}
			By Lemma \ref{HS*}, there is  a short exact sequence of complexs
			$$ 0\longrightarrow \mathbf{\hat{G}}^A\longrightarrow \mathbf{\hat{G}}^B\longrightarrow \mathbf{\hat{G}}^C\longrightarrow 0
			$$
			where $\mathbf{\hat{G}}^A, \mathbf{\hat{G}}^B$ and
			 $\mathbf{\hat{G}}^C$ are the deleted complexs of proper $\Gproj$ 
			 resolution for $A, B$ and $C$ respctively with $G_n^B=G_n^A\oplus G_n^C,\ n\geqslant0$.
			
			Since $\Mcc(-,X)$ is a additive functor for any $X\in\Mcc$, then we have the following short exact sequence:
			$$ 0\longrightarrow \Mcc(\mathbf{\hat{G}}^C,X)\longrightarrow  \Mcc(\mathbf{\hat{G}}^B,X)\longrightarrow  \Mcc(\mathbf{\hat{G}}^A,X)\longrightarrow 0.
			$$
			By long exact sequence theorem for homology of complex, there is always a natural connecting homomorphism:
			$$
			\xymatrix{
			&\partial^n_X : H_n(\Mcc(\mathbf{\hat{G}}^A,X))\ar@{=}[d]\ar[r]&H_{n+1}(\Mcc(\mathbf{\hat{G}}^C,X))\ar@{=}[d]&\\
			&\extgp^n(A,X)&\extgp^{n+1}(C,X)
			}
			$$
			and $ H_{-1}(\Mcc(\mathbf{\hat{G}}^A,X))=0$. So we completed the proof.
			\end{proof}

			Dually, we have the following result.
			\begin{thm}
			Let $\xymatrix{A\ar[r]^x&B\ar[r]^y&C\ar@{-->}[r]^{\delta}&}$ be a $\Mcc(-,\GI)$-exact $\Mbe$-triangle in $\xi$ with $A\in \mathbf{Res}(\GI)$ and $C\in\mathbf{Res}(\GI)$, then for any object $X\in\Mcc$, there is always a natural connecting homomorphism
			$$
			\xymatrix{
			\partial^n_X : \extgp^n(X,C)\ar[r]&\extgp^{n+1}(X,A),\ n\geqslant0
			}
			$$
			making the following sequence
			\[
			0\rightarrow \extgi^0(X,A)\rightarrow \extgi^0(X,B)\rightarrow\extgi^0(X,C)
			\stackrel{\partial_X^0}\rightarrow\extgi^1(X,A)\rightarrow\cdots
			\]
			exact.
			\end{thm}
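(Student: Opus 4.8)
The plan is to mirror the proof of the preceding theorem under the projective–injective duality, with resolutions replaced by coresolutions and the contravariant action of $\Mcc(-,X)$ on the first variable replaced by the covariant action of $\Mcc(X,-)$ on the second variable. First I would invoke the dual of the Horseshoe Lemma (Lemma \ref{HS*}): since the $\Mbe$-triangle $\xymatrix{A\ar[r]^x&B\ar[r]^y&C\ar@{-->}[r]^{\delta}&}$ is $\Mcc(-,\GI)$-exact and both $A$ and $C$ lie in $\mathbf{Res}(\GI)$, the dualized construction produces proper $\Ginj$ coresolutions $B\to\mathbf{H}_B$ (so that, in particular, $B\in\mathbf{Res}(\GI)$ and all three $\extgi$-groups in the statement are defined), together with a commutative ladder in which the coresolution objects satisfy $H^n_B\simeq H^n_A\oplus H^n_C$ for every $n\geqslant0$. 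Consequently the deleted coresolution complexes assemble into a degreewise split short exact sequence of cochain complexes
$$0\longrightarrow \mathbf{\hat{H}}_A\longrightarrow \mathbf{\hat{H}}_B\longrightarrow \mathbf{\hat{H}}_C\longrightarrow 0.$$

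Next, because this sequence is degreewise split and $\Mcc(X,-)$ is an additive covariant functor, applying $\Mcc(X,-)$ keeps it short exact, yielding
$$0\longrightarrow \Mcc(X,\mathbf{\hat{H}}_A)\longrightarrow \Mcc(X,\mathbf{\hat{H}}_B)\longrightarrow \Mcc(X,\mathbf{\hat{H}}_C)\longrightarrow 0$$
as a short exact sequence of complexes of abelian groups. I would then feed this into the long exact sequence theorem for the cohomology of a short exact sequence of cochain complexes, which supplies for each $n\geqslant0$ a natural connecting homomorphism $\partial^n_X$ and the desired long exact sequence. By the defining identity $\extgi^n(X,-)=H^n(\Mcc(X,(-)_{\sharp}))$ this reads exactly as
$$0\to \extgi^0(X,A)\to \extgi^0(X,B)\to \extgi^0(X,C)\stackrel{\partial_X^0}\to \extgi^1(X,A)\to\cdots,$$
with $\partial^n_X\colon \extgi^n(X,C)\to\extgi^{n+1}(X,A)$ (the occurrence of $\extgp$ in the displayed connecting map of the statement being a typo for $\extgi$); the vanishing $H^{-1}(\Mcc(X,\mathbf{\hat{H}}_A))=0$ accounts for exactness at the left-hand end.

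The only genuinely new ingredient is the dual of the Horseshoe Lemma, and this is where I would concentrate the verification. One must check that the dualized construction stays inside $\xi$ and produces $\Mcc(-,\GI)$-exact coresolution $\Mbe$-triangles, so that $B\to\mathbf{H}_B$ is genuinely a \emph{proper} $\Ginj$ coresolution, together with the degreewise decomposition $H^n_B\simeq H^n_A\oplus H^n_C$. This is entirely parallel to the proof of Lemma \ref{HS*}, obtained by dualizing Lemma \ref{BH}, \citep[Lemma 4.14]{JDP}, \citep[Lemma 4.15(2)]{JDP} and Proposition \ref{INDE}, and it relies on Condition (WIC) together with the fact that $\xi$ is a proper class; hence no substantively new obstacle arises. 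Everything downstream is the formal long-exact-cohomology machinery for a short exact sequence of complexes of abelian groups, which transfers verbatim.
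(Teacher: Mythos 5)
Your proposal is correct and matches the paper's intent exactly: the paper proves the projective version in full (split short exact sequence of deleted complexes from the Horseshoe Lemma, apply the Hom functor, invoke the long exact cohomology sequence) and disposes of this statement with the single word ``Dually,'' which is precisely the dualization you carry out, including the correct observation that the displayed $\extgp$ in the connecting map is a typo for $\extgi$. No divergence from the paper's route and no gap.
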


			\begin{defn}
			Let $A$ be an object in $\Mcc$.  The $\Mbe$-triangle  $\xymatrix{K\ar[r]&G\ar[r]&A\ar@{-->}[r]&}$ in $\xi$
			is called a weak $\GP$-approximation of $A$, if $G\in\GP$ and $\xi$-pd$K<\infty$. Furthermore, if the above $\Mbe$-triangle
			is $\Mcc(\GP,-)$-exact, then we say it is a $\GP$-approximation of $A$.
		
			Dually, the $\Mbe$-triangle  $\xymatrix{A\ar[r]^{f}&Q\ar[r]^{g}&L\ar@{-->}[r]^{\delta}&}$ in $\xi$
			is called a weak $\GI$-coapproximation of $A$, if $Q\in\GI$ and $\xi$-id$L<\infty$. Furthermore, if the above $\Mbe$-triangle
			is $\Mcc(-,\GI)$-exact, then we say it is a $\GI$-coapproximation of $A$.			
			\end{defn}
			
\begin{rem}
	If $C$ is a triangulated category or exact category, then  the weak $\GP$-approximation 
	is equivalent to $\GP$-approximation and weak $\GI$-coapproximation is  equivalent to $\GI$-approximation for any object $A\in\Mcc$
\end{rem}

\begin{rem}
	Let $A$ be any object in $\Mcc$. 

	(1) If $\Gpd A<\infty$, there is a  weak $\GP$-approximation of $A$;

	(2) If $\mathcal{G}$id $A<\infty$,  there is a  weak $\GI$-coapproximation of $A$.
\end{rem}

\begin{defn}
	Let $A$ be any object in $\Mcc$. The   $\Mbe$-triangle  $\xymatrix{K\ar[r]^{f}&G\ar[r]^{g}&A\ar@{-->}[r]^{\delta}&}$ in $\xi$	
is called a $\xi$-successor of $A$ if

(1) it is a  $\GP$-approximation of $A$;

(2) for any $M\in\GI$, the morphism $\Mcc(g,M)$ is always monomorphism. 

\noindent Dually, the   $\Mbe$-triangle  $\xymatrix{A\ar[r]^{f}&Q\ar[r]^{g}&L\ar@{-->}[r]^{\delta}&}$ in $\xi$	
is called a $\xi$-predecessor of $A$ if

(1) it is a  $\GI$-coapproximation of $A$;

(2) for any $N\in\GP$, the morphism $\Mcc(N,f)$ is always epimorphism.

\end{defn}

We use $\mathcal{GP}^*(\xi)$ (resp.  $\mathcal{GI}^*(\xi)$) to denote the full subcategory of $\Mcc$ whose objects exist $\xi$-successor
 (resp. $\xi$-predecessor).
			
			\begin{rem}
			$\GP$ is the full subcategory of $\mathcal{GP}^*(\xi)$, and $\GI$ is the full subcategory of $\mathcal{GI}^*(\xi)$.
			\end{rem}
			
			\begin{proof}
			For any $G\in\GP$, $Q\in\GI$, it is easy to see that the
			 $\mathbb{E}$-triangles
			  $$\xymatrix{0\ar[r]&G\ar[r]^1&G\ar@{-->}[r]&}\ \text{and}\ \xymatrix{Q\ar[r]^1&Q\ar[r]&0\ar@{-->}[r]&}$$
			   are the $\xi$-successor of $G$ and $\xi$-predecessor of $Q$ respctively.
			\end{proof}
			
			\begin{thm}\label{GEXT}
			Let $A$ and $B$ be two objects in $\Mcc$ with 
			$A\in \mathcal{GP}^*(\xi)$, $B\in \mathcal{GI}^*(\xi)$. 
			Then $\extgp^n(A,B)\backsimeq \extgi^n(A,B)$ for any $n\geqslant 0$.
			\end{thm}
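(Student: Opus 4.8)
The plan is to reduce both $\extgp^{\bullet}(A,B)$ and $\extgi^{\bullet}(A,B)$ to ordinary $\ext^{\bullet}$-groups between the ``finite-dimensional tails'' of a $\xi$-successor of $A$ and a $\xi$-predecessor of $B$, and then to exploit the balancedness of $\ext^{\bullet}$. First I fix a $\xi$-successor $K\to G\stackrel{g}\to A\dashrightarrow$ of $A$ and a $\xi$-predecessor $B\stackrel{f'}\to Q\to L\dashrightarrow$ of $B$. By definition $G\in\GP$ and $Q\in\GI$, while the finiteness of the $\xi$-projective dimension of $K$ and of the $\xi$-injective dimension of $L$ gives $K\in\widehat{\mathcal{P}}(\xi)$ and $L\in\widehat{\mathcal{I}}(\xi)$. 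Splicing the $\GP$-approximation triangle with a proper $\Gproj$ resolution of $K$, which exists by Corollary \ref{JYT}(1), shows $A,K\in\mathbf{Res}(\GP)$; dually $B,L\in\mathbf{Res}(\GI)$. Hence the long exact sequence theorems for $\extgp$ and for $\extgi$ apply to these two $\Mcc(\GP,-)$-exact and $\Mcc(-,\GI)$-exact triangles.

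Next I carry out the two dimension shifts. Applying the long exact sequence of $\extgp^{\bullet}(-,B)$ to $K\to G\to A$ and using $\extgp^{i}(G,B)=0$ for $i\geqslant1$ yields isomorphisms $\extgp^{n+1}(A,B)\cong\extgp^{n}(K,B)$ for $n\geqslant1$, plus a four term exact sequence in low degree. Since $K\in\widehat{\mathcal{P}}(\xi)$, the Proposition identifying $\extgp^{\bullet}$ with $\ext^{\bullet}$ on $\widehat{\mathcal{P}}(\xi)$ gives $\extgp^{n}(K,B)\cong\ext^{n}(K,B)$, whence $\extgp^{m}(A,B)\cong\ext^{m-1}(K,B)$ for $m\geqslant2$. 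Dually, the long exact sequence of $\extgi^{\bullet}(A,-)$ applied to $B\to Q\to L$, together with $\extgi^{i}(A,Q)=0$ for $i\geqslant1$ and $\extgi^{n}(A,L)\cong\ext^{n}(A,L)$ (as $L\in\widehat{\mathcal{I}}(\xi)$), gives $\extgi^{m}(A,B)\cong\ext^{m-1}(A,L)$ for $m\geqslant2$.

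The core is the balance statement $\ext^{n}(K,B)\cong\ext^{n}(A,L)$ for $n\geqslant1$, which I would prove by shifting both sides down to $\ext^{\bullet-1}(K,L)$ via Lemma \ref{LZHL}. Applying $\ext^{\bullet}(K,-)$ to $B\to Q\to L$ and using the dual of Lemma \ref{ZHL}, so that $\ext^{i}(K,Q)=0$ for $i\geqslant1$ because $K\in\widehat{\mathcal{P}}(\xi)$ and $Q\in\GI$, gives $\ext^{n}(K,B)\cong\ext^{n-1}(K,L)$ for $n\geqslant2$; applying $\ext^{\bullet}(-,L)$ to $K\to G\to A$ and using $\ext^{i}(G,L)=0$ for $i\geqslant1$ from Lemma \ref{ZHL} (as $G\in\GP$, $L\in\widehat{\mathcal{I}}(\xi)$) gives $\ext^{n}(A,L)\cong\ext^{n-1}(K,L)$ for $n\geqslant2$. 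Combining everything yields $\extgp^{m}(A,B)\cong\extgi^{m}(A,B)$ at once for $m\geqslant3$, and for $m\geqslant2$ modulo the single identification $\ext^{1}(K,B)\cong\ext^{1}(A,L)$.

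The main obstacle is the low-degree matching in degrees $n=0,1$, equivalently $m=0,1,2$. There the clean connecting isomorphisms degenerate into four term exact sequences, and $\ext^{1}(K,B)$ and $\ext^{1}(A,L)$ arise as cokernels of two \emph{different} maps $\ext^{0}(K,Q)\to\ext^{0}(K,L)$ and $\ext^{0}(G,L)\to\ext^{0}(K,L)$, whose agreement is not formal. This is precisely where condition $(2)$ of the $\xi$-successor, that $\Mcc(g,M)$ is monic for $M\in\GI$, and condition $(2)$ of the $\xi$-predecessor, that $\Mcc(N,f')$ is epic for $N\in\GP$, must be used: they supply the extra left and right exactness of the relevant $\Mcc$-sequences that pin down these images and kernels, forcing the low-degree terms to coincide and the isomorphisms to persist for all $n\geqslant0$. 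Finally I would verify that every identification is induced by the functorial connecting homomorphisms, so that they assemble into a natural isomorphism $\extgp^{n}(A,B)\cong\extgi^{n}(A,B)$.
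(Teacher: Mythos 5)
Your high-degree dimension shifts are fine, but the argument has a genuine gap exactly where you flag it, and the ingredient you propose to close it with is not the right one. To match the low-degree terms (your $m\leqslant 2$) you must show that the two images in $\ext^0(K,L)$, namely $\mathrm{im}\bigl(\ext^0(K,Q)\to\ext^0(K,L)\bigr)$ and $\mathrm{im}\bigl(\ext^0(G,L)\to\ext^0(K,L)\bigr)$, coincide; the natural way is to show both equal the image of $\ext^0(G,Q)\to\ext^0(K,L)$, and for that you need two surjectivities: every morphism $K\to Q$ with $Q\in\GI$ extends along $K\to G$, and every morphism $G'\to L$ with $G'\in\GP$ lifts along $Q\to L$. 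Neither of these is condition (2) of the definitions of $\xi$-successor and $\xi$-predecessor: condition (2) gives the monicity of $\Mcc(g,M)$ and the epicity of $\Mcc(N,f')$, which sit on the \emph{other} side of the respective three-term sequences. The surjectivities are the nontrivial content; the paper proves the first by choosing an $\Mbe$-triangle $Q'\to I\to Q$ in $\xi$ with $I\in\mathcal{I}(\xi)$ and $Q'\in\GI$, using $\xi$-pd$K<\infty$ together with the dual of Lemma \ref{ZHL} to lift a map $K\to Q$ to $K\to I$, and then extending over $G$ because $I$ is $\xi$-injective (dually for the second). Without this argument your low-degree case does not close, and since the theorem asserts the isomorphism for all $n\geqslant 0$, the proof is incomplete as written.

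For comparison, the paper's route avoids the low-degree bookkeeping altogether: it uses precisely the surjectivity argument above to upgrade the $\xi$-successor of $A$ to a proper $\Gproj$ resolution of $A$ that is in addition $\Mcc(-,\GI)$-exact, dually produces a proper $\Ginj$ coresolution of $B$ that is $\Mcc(\GP,-)$-exact, and then invokes the balance theorem of Enochs--Jenda \citep[Proposition 2.3]{EO}, which yields the isomorphism in all degrees at once from the associated double complex. If you wish to keep the dimension-shifting strategy, you must (i) prove the two surjectivity statements and (ii) actually carry out the diagram chase identifying the kernels and cokernels in degrees $0$, $1$ and $2$; as it stands, both steps are missing.
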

			
			\begin{proof}
			Since  $A\in \mathcal{GP}^*(\xi)$, there exists a $\GP$-approximation  $$\xymatrix{K_1\ar[r]^{f}&G_0\ar[r]^{g}&A\ar@{-->}[r]&}.$$
			For any $Q\in\GI$, there is a $\mathbb{E}$-triangle $\xymatrix{Q'\ar[r]^{x}&I\ar[r]^{y}&Q\ar@{-->}[r]&}$ in $\xi$, such that $Q'\in\GI$, $I\in\mathcal{I}(\xi)$. Note that $\xi$-pd$K_1<\infty$, by Lemma \ref{LZHL} and the dual of Lemma \ref{ZHL}, we have a short exact sequence
			$$
			0\rightarrow\Mcc(K_1,Q')\rightarrow\Mcc(K_1,I)\rightarrow\Mcc(K_1,Q)\rightarrow 0.
			$$
			Then for any morphism $\alpha\in\Mcc(K_1,Q)$, there exists a morphism $\beta\in\Mcc(K_1,I)$ such that $\alpha=y\beta$. Since $I\in\mathcal{I}(\xi)$, we can get a short exact sequence
			$$
			0\rightarrow\Mcc(A,I)\rightarrow\Mcc(G_0,I)\rightarrow\Mcc(K_1,I)\rightarrow 0.
			$$
			  So there is a morphism $\gamma\in\Mcc(G_0,I)$ such that $\beta=\gamma f$. Thus, we have $$\alpha=y\beta=y\gamma f=\Mcc(f,Q)(y\gamma).$$ This implies that $\Mcc(f,Q)$ is a epimorphism. And $A$ is in $\mathcal{GP}^*(\xi)$, then $\Mcc(g,Q)$ is a monomorphism. Therefore, the sequence
			$$
			0\rightarrow\Mcc(A,Q)\rightarrow\Mcc(G_0,Q)\rightarrow\Mcc(K_1,Q)\rightarrow 0
			$$
			is exact. There is a $\mathbb{E}$-triangle $\xymatrix{K_2\ar[r]&P_1\ar[r]&K_1\ar@{-->}[r]&}$ in $\xi$ with $P_1\in\P$ and $\xi$-pd$K_2<\infty$ since $\xi$-pd$K_1<\infty$. It follows from Lemma \ref{LZHL}, Lemma \ref{ZHL} and the dual of Lemma \ref{ZHL} that the sequences
			$$
			0\rightarrow\Mcc(G,K_2)\rightarrow\Mcc(G,P_1)\rightarrow\Mcc(G,K_1)\rightarrow 0
			$$
			and
			$$
			0\rightarrow\Mcc(K_1,Q)\rightarrow\Mcc(P_1,Q)\rightarrow\Mcc(K_2,Q)\rightarrow 0
			$$
			are exact for any $G\in\GP$ and $Q\in\GI$. Continuing this process, one can get a proper $\Gproj$ resolution of $A$ which is $\Mcc(-,\GI)$-exact.
			
			Dually, for $B\in\mathcal{GI}^*(\xi)$, there exists a proper $\Ginj$ coresolution of $B$ which is $\Mcc(\GP,-)$-exact. By \citep[Proposition 2.3]{EO}, the desired isomorphism
			$$
			\extgp^n(A,B)\backsimeq\extgi^n(A,B),\  n\geqslant0
			$$
			holds.
			\end{proof}
			
       If $A\in \mathcal{GP}^*(\xi)$, $B\in \mathcal{GI}^*(\xi)$, we let 
			$\gext^n(A,B)=\extgp^n(A,B)\backsimeq \extgi^n(A,B)$, and we say 
			$\gext^n(-,-)$ is the $\xi$-Gorenstein derived functor of $\Mcc(-,-)$.
			
			At the end of this section, we give some applications of  derived functors and 
			Gorenstein derived functors.		
				
			\begin{prop}
				Let $A$ be  any object in $\Mcc$. If $\Gpd A\leqslant 1$ and 
				$\ext^1(A,\P)=0$, then $A$ is a $\Gproj$ object.
			   \end{prop}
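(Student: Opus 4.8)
The plan is to exhibit $A$ as a direct summand of a $\xi$-$\mathcal{G}$projective object by means of a single pushout. First I would unwind the hypothesis $\Gpd A\leqslant 1$: by definition there is an $\Mbe$-triangle $\xymatrix{K\ar[r]^{f}&G\ar[r]^{g}&A\ar@{-->}[r]^{\delta}&}$ in $\xi$ with $G\in\GP$ and $\Gpd K\leqslant 0$, i.e. $K\in\GP$. Since $K$ is $\xi$-$\mathcal{G}$projective, the right-hand half of its complete $\xi$-projective resolution provides a resolution $\Mbe$-triangle $\xymatrix{K\ar[r]^{u}&P\ar[r]^{v}&K'\ar@{-->}[r]&}$ in $\xi$ with $P\in\P$ and $K'\in\GP$.

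Next I would feed these two $\Mbe$-triangles, which share the source $K$, into the pushout construction of Lemma \ref{BH}(2). This produces a commutative diagram
$$\xymatrix{
K\ar[d]^{u}\ar[r]^{f}&{G}\ar[d]^{m_2}\ar[r]^{g}&{A}\ar@{=}[d]\\
{P}\ar[d]^{v}\ar[r]^{m_1}&M\ar[r]^{e_1}\ar[d]^{e_2}&{A}\\
{K'}\ar@{=}[r]&{K'}
}$$
whose middle row $\xymatrix{P\ar[r]^{m_1}&M\ar[r]^{e_1}&A\ar@{-->}[r]^{u_*\delta}&}$ lies in $\xi$ (as $\xi$ is closed under cobase change) with $P\in\P$, and whose middle column $\xymatrix{G\ar[r]^{m_2}&M\ar[r]^{e_2}&K'\ar@{-->}[r]&}$ lies in $\xi$ with both end terms in $\GP$.

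The crux is to split the middle row. Its class is $u_*\delta\in\Mbe(A,P)$, and since this $\Mbe$-triangle lies in $\xi$ it is represented by an element of $\ext^1(A,P)$. I would make this precise by choosing a $\xi$-deflation $Q_0\to A$ with $Q_0\in\P$ and lifting it through $e_1$ (possible because $Q_0$ is $\xi$-projective and the middle row is a $\xi$-$\Mbe$-triangle); restricting the lift to the first syzygy yields a cocycle $\Omega\to P$ whose class in $\ext^1(A,P)=H^1(\Mcc(\mathbf{Q},P))$ is exactly that of $u_*\delta$. As $\ext^1(A,\P)=0$, this cocycle is a coboundary, which forces $u_*\delta$ to be the split extension; equivalently, by Lemma \ref{FJ} the inflation $m_1$ is a section, so $M\simeq P\oplus A$. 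Turning the numerical vanishing $\ext^1(A,\P)=0$ into an honest splitting of this specific $\Mbe$-triangle is the step I expect to be the main obstacle.

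Finally, the middle column exhibits $M$ as an extension in $\xi$ of $K'\in\GP$ by $G\in\GP$; by the closure of $\GP$ under such extensions (see \cite{JDP}) we get $M\in\GP$. Hence $A$ is a direct summand of $M\simeq P\oplus A\in\GP$, and closure of $\GP$ under direct summands yields $A\in\GP$, as claimed. Note that only the single vanishing $\ext^1(A,\P)=0$ is used, so no computation of higher $\ext$-groups is needed.
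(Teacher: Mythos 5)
Your overall strategy is viable and is genuinely different from what the paper does: the paper's entire proof consists of taking the $\Mbe$-triangle $K\rightarrow G\rightarrow A$ in $\xi$ with $K,G\in\GP$ and invoking \citep[Lemma 3.6]{JZP} as a black box, whereas you reconstruct that lemma by hand via the pushout of Lemma \ref{BH}(2). The outer scaffolding of your argument is sound: the triangle $K\stackrel{u}{\longrightarrow}P\longrightarrow K'$ with $P\in\P$, $K'\in\GP$ exists because $K$ is $\Gproj$; both the middle row and the middle column of the pushout diagram lie in $\xi$ by closure under cobase change; the middle column shows $M\in\GP$ by closure of $\GP$ under $\xi$-extensions; and closure of $\GP$ under direct summands (valid under Condition (WIC)) finishes the proof once $M\simeq P\oplus A$.

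The gap is exactly where you predicted it. Your cocycle argument produces $\alpha\colon K_1\rightarrow P$ whose image $\alpha\circ d_1'$ in $\Mcc(Q_1,P)$ is a coboundary, i.e. $\alpha d_1'=s x_1 d_1'$ for some $s\colon Q_0\rightarrow P$, where $d_1'\colon Q_1\rightarrow K_1$ is the deflation and $x_1\colon K_1\rightarrow Q_0$ the inflation of the syzygy triangle. To conclude $u_*\delta=0$ via Lemma \ref{FJ} you need $\alpha$ itself to factor through $x_1$, and for that you must cancel $d_1'$ on the right. In an exact category deflations are epimorphisms and this is automatic, but in an extriangulated category it is not: $\Mcc(d_1',P)$ need not be injective (already in a triangulated category a map killed by composition with a deflation only factors through the shifted third term, not through $0$), so coboundary-ness does not immediately give the required factorization. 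The clean repair stays entirely inside the paper's toolkit and bypasses cocycles: apply Lemma \ref{LZHL} to $K\stackrel{f}{\longrightarrow}G\longrightarrow A$ and $P\in\P$ to get the exact sequence $\ext^0(G,P)\rightarrow\ext^0(K,P)\rightarrow\ext^1(A,P)=0$; by Lemma \ref{ZHL} (applicable since $G,K\in\GP$ and $P\in\widehat{\mathcal{P}}(\xi)$) the first two terms are naturally isomorphic to $\Mcc(G,P)$ and $\Mcc(K,P)$, so $\Mcc(f,P)$ is surjective and $u$ extends to some $w\colon G\rightarrow P$ with $wf=u$; Lemma \ref{FJ} then yields $u_*\delta=0$ directly, and the rest of your proof goes through unchanged.
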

			   \begin{proof}
			   Since $\Gpd A\leqslant 1$, there exists an $\Mbe$-triangle 
			   $\xymatrix{K\ar[r]&G\ar[r]&A\ar@{-->}[r]&}$ in $\xi$, with $G\in\GP$
			   and $K\in\GP$. Note that $\ext^1(A,\P)=0$, by \citep[Lemma 3.6]{JZP}, $A\in\GP$.
			   \end{proof}
			   \begin{thm}
				Let $A$ be  any object in $\Mcc$. For any integer $n\geqslant 1$, if 
				$\ext^n(A,A)=0$, then $A\in\nSGP$ (see \cite[Definition 4.3]{He}) if and only if $A\in\P$.
			   \end{thm}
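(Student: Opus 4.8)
The plan is to establish the two implications separately; the converse is immediate, and the implication $A\in\nSGP\Rightarrow A\in\P$ carries all the content. For the converse, observe that the hypothesis $\ext^n(A,A)=0$ is not even needed: a $\xi$-projective object $A$ lies in $\nSGP$ via the split periodic $\xi$-exact complex all of whose terms equal $A$ (built by splicing the split $\Mbe$-triangles $A\xrightarrow{1}A\to 0$ and $0\to A\xrightarrow{1}A$). Since every split $\Mbe$-triangle is $\Mcc(-,\P)$-exact, the required $\Mcc(-,\P)$-exactness holds automatically.

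For the forward direction, I would unwind the definition of $A\in\nSGP$: there is an $\Mcc(-,\P)$-exact $\xi$-exact complex $0\to A\to P_{n-1}\to\cdots\to P_0\to A\to 0$ with every $P_i\in\P$, realized by $\Mbe$-triangles $T_j\colon S_{j-1}\to P_{n-j}\to S_j$ in $\xi$ for $1\le j\le n$, whose syzygies satisfy $S_0=S_n=A$ and $S_{n-1}=K_0$. Because this complex splices to a complete $\xi$-projective resolution having $A$ as a syzygy, we have $A\in\GP$.

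The core of the argument is dimension shifting. Applying the covariant long exact sequence of Lemma \ref{LZHL} with first argument $A$ to each $T_j$, and invoking Lemma \ref{ZHL} — which gives $\ext^i(A,P_{n-j})=0$ for all $i\ge 1$, since $A\in\GP$ and $P_{n-j}\in\P\subseteq\widehat{\mathcal{P}}(\xi)$ — yields isomorphisms $\ext^i(A,S_j)\cong\ext^{i+1}(A,S_{j-1})$ for every $i\ge 1$. Chaining these along $T_1,\ldots,T_{n-1}$ (with exponents $i=n-1,n-2,\ldots,1$) produces $\ext^n(A,A)=\ext^n(A,S_0)\cong\ext^1(A,S_{n-1})=\ext^1(A,K_0)$; for $n=1$ the chain is empty and one has directly $\ext^1(A,A)=\ext^1(A,K_0)$ with $K_0=A$.

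Finally, the hypothesis $\ext^n(A,A)=0$ forces $\ext^1(A,K_0)=0$. Under the standard identification of first $\xi$-cohomology with the group of $\xi$-$\Mbe$-extensions, this says that the extension class realized by the last triangle $T_n\colon K_0\to P_0\to A$ vanishes; hence $T_n$ is split and $A$ is a direct summand of $P_0\in\P$. Since $\P$ is closed under direct summands, $A\in\P$, as desired. The main obstacle I anticipate is the index bookkeeping in the dimension-shifting chain — keeping every connecting isomorphism in the admissible range $i\ge 1$ and treating $n=1$ as a degenerate base case — together with making precise that the vanishing of $\ext^1(A,K_0)$ detects the splitting of $T_n$; the latter rests on the Yoneda-type identification of $\ext^1$ with $\xi$-extensions available in this framework.
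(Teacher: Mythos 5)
Your argument is correct in substance but runs ``dual'' to the paper's proof in two respects, and it is worth seeing what each choice costs. You dimension-shift in the \emph{second} variable, using $\ext^i(A,P)=0$ for $P\in\P$ via Lemma \ref{ZHL}; that lemma requires its first argument to lie in $\GP$, so you must first observe that $A\in\GP$ (by splicing the periodic complex into a complete $\xi$-projective resolution), which you do. The paper instead shifts in the \emph{first} variable, using the contravariant long exact sequence of Lemma \ref{LZHL} together with the vanishing $\ext^m(P_i,A)=0$ for $m\geqslant 1$, which is immediate from $P_i\in\P$ and needs no preliminary information about $A$, so it is slightly leaner. The second divergence is the endgame: you split the last triangle $K_0\rightarrow P_0\rightarrow A$ by appealing to the identification of $\ext^1(A,K_0)$ with the group $\Mbe_{\xi}(A,K_0)$ of $\xi$-extensions; this identification is standard in the relative theory but is neither stated nor used anywhere in this paper. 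The paper reaches the same conclusion from inside its own toolkit: the vanishing $\ext^1(K_{n-1},A)=0$ makes $\ext^0(P_{n-1},A)\rightarrow\ext^0(A,A)$ surjective, hence $1_A$ factors through the inflation $A\rightarrow P_{n-1}$, and Lemma \ref{FJ} then yields $P_{n-1}\simeq A\oplus K_{n-1}$. If you want your version to be self-contained relative to this paper, replace the Yoneda step by the analogous factorization argument applied to the deflation $P_0\rightarrow A$. Finally, a small slip in the easy direction: the periodic complex with all terms equal to $A$ and alternating maps $1,0$ has $A$ as a syzygy only at every other position, so as written it witnesses $A\in\nSGP$ only for even $n$; use instead the $n$-fold splice of the split triangle $A\rightarrow A\oplus A\rightarrow A$, or simply invoke $\P\subseteq\nSGP$ as the paper does.
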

			   \begin{proof}
			   Note that $\P\subseteq\nSGP$, if $A\in\P$,  then $A\in\nSGP$ is obvious. 
			   
			   If $A\in\nSGP$, there exists a complete $\P$-exact complex:
			   \[  0\longrightarrow A\longrightarrow P_{n-1}\longrightarrow P_{n-2}\longrightarrow\cdots\longrightarrow P_1\longrightarrow P_0\longrightarrow A\longrightarrow0
			   \]
			   where $P_i\in\P,\ 0\leqslant i \leqslant n-1$. Thus for any $0\leqslant i\leqslant n-1$, we have 
			   $\mathcal{C}(-,\P)$-exact $\xi$-resolution $\Mbe$-triangle
			   $\xymatrix{ K_{i+1}\ar[r]&P_i\ar[r]&K_i\ar@{-->}[r]&}$, $K_{n}=K_0=A$. So there is a exact sequence 
			   $$
			   \ext^m(P_{i},A)\longrightarrow\ext^m(K_{i+1},A)\longrightarrow\ext^{m+1}(K_{i},A)\longrightarrow\ext^{m+1}(P_{i},A).
			   $$
			   Note that for any $m\geqslant 1$, $\ext^m(P_{i},A)=0$,  So $\ext^m(K_{i+1},A)\backsimeq\ext^{m+1}(K_{i},A)$. 
			   By dimension shifting $$\ext^1(K_{n-1},A)=\ext^n(K_0,A)=\ext^n(A,A)=0.$$
			   Then for $\Mbe$-triangle $\xymatrix{A\ar[r]&P_{n-1}\ar[r]&K_{n-1}\ar@{-->}[r]&}$, we get the short exact sequence
			   $$
			   0\longrightarrow \Mcc(K_{n-1},A)\longrightarrow\Mcc(P_{n-1},A) \longrightarrow \Mcc(A,A)\longrightarrow0.
			   $$
			   By Lemma \ref{FJ},This is to say $P_{n-1}\backsimeq A\oplus K_{n-1}$, so $A\in\P$.
			   \end{proof}

			   \begin{defn}
				Let $\mathcal{X}$ be a class of some object in $\Mcc$, and set
				\begin{equation*}
				 \begin{split}
				&\mathcal{X}^{\bot}=\{B\in\Mcc~|~\ext^n(X,B)=0,\forall n\geqslant1,\forall X\in\mathcal{X}\},\\
				&^{\bot}\mathcal{X}=\{A\in\Mcc~|~\ext^n(A,X)=0,\forall n\geqslant1,\forall X\in\mathcal{X}\}.
				 \end{split}
				\end{equation*}
				\end{defn}
				\begin{rem}
				$\GP\subseteq$$^{\bot}\widehat{\mathcal{P}}(\xi)$ and $\GP\subseteq$$^{\bot}\widehat{\mathcal{I}}(\xi)$, 
				$\GI\subseteq\widehat{\mathcal{P}}(\xi)^{\bot}$ and $\GI\subseteq\widehat{\mathcal{I}}(\xi)^{\bot}$.
				\end{rem}
				
				\begin{rem}
					Let $A$ be  any object in $\Mcc$. According to \citep[Theorem 3.8]{JZP}, if $A\in\widehat{\mathcal{GP}}(\xi)$, 
					then
				\begin{equation*}
				 \begin{split}
				\Gpd A&= \sup\{n \in  \mathbb{N}~|~\exists P\in\P~\text{such that }~\ext^n(A, P)\neq 0\}\\
				&= \sup\{n \in \mathbb{N}~|~\exists P\in\widehat{\mathcal{P}}(\xi)~\text{such that}~\ext^n(A, P)\neq 0\}.
				 \end{split}
				\end{equation*}
				So we have 
				\end{rem}
				\begin{prop}\label{XJK}
				$\GP=\widehat{\mathcal{GP}}(\xi)\bigcap$$^{\bot}\widehat{\mathcal{P}}(\xi)=\widehat{\mathcal{GP}}(\xi)\bigcap$$^{\bot}\P$.
				\end{prop}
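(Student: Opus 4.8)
The plan is to prove the cyclic chain of inclusions
$$\GP\ \subseteq\ \widehat{\mathcal{GP}}(\xi)\cap{}^{\bot}\widehat{\mathcal{P}}(\xi)\ \subseteq\ \widehat{\mathcal{GP}}(\xi)\cap{}^{\bot}\P\ \subseteq\ \GP,$$
which forces the three subcategories to coincide. Two of these inclusions are essentially formal, and the real content sits in the last one, where the dimension formula recorded in the preceding Remark does the work.

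First I would settle the middle inclusion by a purely order-theoretic observation. Every $\xi$-projective object has $\xi$-projective dimension $0$, so $\P\subseteq\widehat{\mathcal{P}}(\xi)$; since enlarging the test class can only shrink the left orthogonal, this gives ${}^{\bot}\widehat{\mathcal{P}}(\xi)\subseteq{}^{\bot}\P$, and intersecting with $\widehat{\mathcal{GP}}(\xi)$ yields the second inclusion. For the first inclusion, let $A\in\GP$. Then $\Gpd A=0$, so $A\in\widehat{\mathcal{GP}}(\xi)$; moreover, for every $X\in\widehat{\mathcal{P}}(\xi)$, Lemma \ref{ZHL} gives $\ext^i(A,X)=0$ for all $i\geqslant1$, so that $A\in{}^{\bot}\widehat{\mathcal{P}}(\xi)$. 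Hence $A$ lies in $\widehat{\mathcal{GP}}(\xi)\cap{}^{\bot}\widehat{\mathcal{P}}(\xi)$.

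The heart of the argument is the last inclusion. Take $A\in\widehat{\mathcal{GP}}(\xi)\cap{}^{\bot}\P$. The membership $A\in{}^{\bot}\P$ means precisely that $\ext^n(A,P)=0$ for all $n\geqslant1$ and all $P\in\P$. Since $A$ has finite $\Gproj$ dimension (this is exactly where the factor $\widehat{\mathcal{GP}}(\xi)$ enters), the preceding Remark applies and computes $\Gpd A=\sup\{\,n\mid\ext^n(A,P)\neq0\text{ for some }P\in\P\,\}$; the vanishing just observed kills every term with $n\geqslant1$, so this supremum is at most $0$. Therefore $\Gpd A\leqslant0$, which is equivalent to $A\in\GP$, the case $\Gpd A=-1$ being the zero object. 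This closes the cycle and proves the proposition.

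The step I expect to require the most care is this final one, specifically the treatment of the $n=0$ term in the dimension formula: the orthogonality hypothesis annihilates only $\ext^{n}(A,P)$ for $n\geqslant1$, whereas $\ext^0(A,P)\cong\Mcc(A,P)$ may well be nonzero. One must therefore read ``supremum $\leqslant0$'' as the statement $\Gpd A\leqslant0$, and invoke the convention $\Gpd A\leqslant0\Leftrightarrow A\in\GP$, so that the collapse of the higher $\ext$-groups delivers membership in $\GP$ rather than in some strictly larger subcategory.
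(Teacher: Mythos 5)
Your argument is correct and follows exactly the route the paper intends: the two easy inclusions come from Lemma \ref{ZHL} together with the monotonicity of $(-)^{\bot}$ under enlarging the test class, and the substantive inclusion $\widehat{\mathcal{GP}}(\xi)\cap{}^{\bot}\P\subseteq\GP$ is read off from the dimension formula of the preceding Remark (\citep[Theorem 3.8]{JZP}), which is precisely why the paper states the proposition with no further proof. Your explicit handling of the $n=0$ term and the convention $\Gpd A\leqslant 0\Leftrightarrow A\in\GP$ is a welcome clarification but does not change the substance.
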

				\begin{prop}\label{MD}
				Suppose that $\sup\{\xi\text{-}\mathcal{G}\text{id}M \  |\  \forall M\in\Mcc\}<\infty$, then for any object $M$ in 
				$\mathbf{Res}(\GP)\bigcap \widehat{\mathcal{GP}}(\xi) $, the following are equivalent:

				(1) $\Gpd M\leqslant n$.
				
				(2) $\extgp^{n+i}(M,N)=0$, ~$\forall i\geqslant1$,~ $\forall N\in\mathcal{C}$.
				
				(3) $\extgp^{n+i}(M,P)=0$, ~$\forall i\geqslant1$,~$\forall P\in \P$.
				\end{prop}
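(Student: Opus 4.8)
The plan is to run the cycle $(1)\Rightarrow(2)\Rightarrow(3)\Rightarrow(1)$, in which $(2)\Rightarrow(3)$ is merely the specialization $N=P\in\P$, while the two substantial implications are $(1)\Rightarrow(2)$ and $(3)\Rightarrow(1)$. The basic tool in both is the $\ext$-characterization of $\Gproj$ dimension recorded just before Proposition \ref{XJK} (namely \citep[Theorem 3.8]{JZP}): for $M\in\widehat{\mathcal{GP}}(\xi)$,
$$\Gpd M=\sup\{\,m\mid \ext^{m}(M,P)\neq0\ \text{for some}\ P\in\P\,\},$$
so that $\Gpd M\leqslant n$ if and only if $\ext^{m}(M,P)=0$ for all $m>n$ and all $P\in\P$.

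For $(1)\Rightarrow(2)$ I would fix a proper $\Gproj$ resolution of $M$ with resolution $\Mbe$-triangles $K_{i+1}\to G_{i}\to K_{i}$ in $\xi$ ($K_{0}=M$, $G_{i}\in\GP$) and show that the $n$-th syzygy $K_{n}$ already lies in $\GP$, via Proposition \ref{XJK} which identifies $\GP=\widehat{\mathcal{GP}}(\xi)\cap{}^{\bot}\P$. That $K_{n}\in\widehat{\mathcal{GP}}(\xi)$ follows by induction along these triangles, since a syzygy of an object of finite $\Gproj$ dimension again has finite $\Gproj$ dimension. That $K_{n}\in{}^{\bot}\P$ follows by dimension shifting: since $\GP\subseteq{}^{\bot}\widehat{\mathcal{P}}(\xi)$ gives $\ext^{\geqslant1}(G_{i},P)=0$, the long exact sequences of Lemma \ref{LZHL} yield $\ext^{m}(K_{n},P)\cong\ext^{m+n}(M,P)$ for $m\geqslant1$, and this vanishes by $(1)$ and the characterization above. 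Truncating the resolution at $K_{n}$ then gives a proper $\Gproj$ resolution of $M$ of length $n$; since $\extgp$ may be computed from any such resolution by Remark \ref{Homotopy}, its deleted complex being concentrated in degrees $0,\dots,n$ forces $\extgp^{n+i}(M,N)=0$ for all $i\geqslant1$ and all $N$, which is $(2)$.

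The heart of $(3)\Rightarrow(1)$ is a comparison lemma: \emph{for $M\in\mathbf{Res}(\GP)$ and $P\in\P$ one has $\extgp^{m}(M,P)\cong\ext^{m}(M,P)$ for all $m$}, and this is where the standing hypothesis is used. Because $P\in\P$, the equality $\xi\text{-}\mathcal{G}\text{id}\,P=\xi\text{-id}\,P$ pointed out from \cite{JZP}, together with $\sup\{\xi\text{-}\mathcal{G}\text{id}\,X\mid X\in\Mcc\}<\infty$, forces $P\in\widehat{\mathcal{I}}(\xi)$; I fix a finite $\xi$-injective coresolution $P\to I^{0}\to\cdots\to I^{t}$, whose cosyzygies $L^{j}$ again lie in $\widehat{\mathcal{I}}(\xi)$. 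Applying the Comparison Theorem to any $\xi$-projective resolution $\mathbf{Q}\to M$ (which is a $\Gproj$ resolution, as $\P\subseteq\GP$) and the proper resolution of $M$ produces a chain map, hence a natural transformation $\extgp^{m}(M,-)\to\ext^{m}(M,-)$. On a genuine $\xi$-injective $I$ the functor $\Mcc(-,I)$ is exact on $\xi$-exact complexes, so $\extgp^{m}(M,I)=\ext^{m}(M,I)=0$ for $m\geqslant1$ while both equal $\Mcc(M,I)$ for $m=0$; thus the transformation is an isomorphism on each $I^{j}$. Since $\GP\subseteq{}^{\bot}\widehat{\mathcal{I}}(\xi)$, each triangle $L^{j}\to I^{j}\to L^{j+1}$ is $\Mcc(\GP,-)$-exact, so both functors admit long exact sequences along it, and an induction up the finite coresolution, via the five lemma, lifts the isomorphism from the $I^{j}$ to $P$.

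Granting the lemma, $(3)$ gives $\ext^{n+i}(M,P)\cong\extgp^{n+i}(M,P)=0$ for all $i\geqslant1$ and all $P\in\P$, whence $\Gpd M\leqslant n$ by the characterization, which is $(1)$. I expect the comparison lemma to be the main obstacle: one must verify that the natural transformation $\extgp^{m}(M,-)\to\ext^{m}(M,-)$ is genuinely compatible with the connecting maps of the two long exact sequences along the coresolution triangles, so that the inductive five-lemma argument applies, and that the finite $\xi$-injective coresolution of $P$ can be chosen with all cosyzygies of finite $\xi$-injective dimension. Once these naturality and finiteness points are secured, the dimension shifts, the truncation, and the use of Proposition \ref{XJK} are routine.
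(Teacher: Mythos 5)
Your proposal follows essentially the same route as the paper: for $(3)\Rightarrow(1)$ both arguments dimension-shift along the proper $\Gproj$ resolution to obtain $\ext^{i}(K_{n},P)\cong\ext^{n+i}(M,P)$, use the hypothesis $\sup\{\xi\text{-}\mathcal{G}\text{id}\,M\}<\infty$ to conclude $\xi\text{-id}\,P<\infty$ and hence identify $\ext^{n+i}(M,P)$ with $\extgp^{n+i}(M,P)$, and then deduce $K_{n}\in\GP$ from Proposition \ref{XJK}. The only real difference is one of detail: the paper compresses the identification into the single assertion $\ext^{n+i}(M,P)=\gext^{n+i}(M,P)=0$, whereas you prove it as an explicit balance-type comparison lemma via the finite $\xi$-injective coresolution of $P$ (and you also write out $(1)\Rightarrow(2)$, which the paper declares obvious); this extra care is warranted, since $\gext$ is only defined in the paper for arguments in $\mathcal{GP}^{*}(\xi)$ and $\mathcal{GI}^{*}(\xi)$, so your lemma is exactly what is needed to make the paper's step rigorous.
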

				
				\begin{proof}
				(1) $\Rightarrow$ (2) $\Rightarrow$ (3) are obvious, we only need to prove (3) $\Rightarrow$ (1).

				Assume the complex $\cdots\longrightarrow G_1\longrightarrow G_0\longrightarrow M$
				is a proper $Gproj$ resolution of $M$ with $\Mcc(\GP,-)$-exact resolution $\Mbe$-triangle 
				$\xymatrix{K_{j+1}\ar[r]&G_j\ar[r]&K_i\ar@{-->}[r]&}$ (Set $K_0=M$) for any integer $j\geqslant0$.
 Let $P$ be any object in $\P$.  There is a exact sequence
				$$\xymatrix@C=.5cm{
				\cdots\ar[r]&\ext^m(G_j,P)\ar[r]& \ext^m(K_{j+1},P)\ar[r]& \ext^{m+1}(K_j,P)\ar[r]&\ext^{m+1}(G_j,P)\ar[r]&\cdots
				}
				$$ 
				by Lemma \ref{ZHL},
				where $\ext^m(G_j,P)=\ext^{m+1}(G_j,P)=0$, so
				$$ \ext^m(K_{j+1},Q)= \ext^{m+1}(K_j,Q), \  m>0.$$
				By dimension shifting, for any $i\geqslant 1$, $ \ext^i(K_{n},P)\backsimeq \ext^{n+i}(M,P)$.  
				Since $P\in\P$, by \citep[Proposition 4.6]{JZP}, $\xi$-id$P=\xi\text{-}\mathcal{G}$id$P<\infty$.Thus 
				$$\ext^{n+i}(M,P)=\gext^{n+i}(M,P)=0.$$
				i.e. $\ext^i(K_n,P)=\gext^{n+i}(M,P)=0,\ i\geqslant1$. Then  $K_n\in\GP$ by Proposition \ref{XJK}. 
				It is enough to say $\Gpd M\leqslant n$.
				\end{proof}
				
				Dually, there is following result.

				\begin{prop}\label{TMD}
					Suppose that $\sup\{\xi\text{-}\mathcal{G}\text{pd}M \  |\  \forall M\in\Mcc\}<\infty$, then for any object $M$ in 
					$\mathbf{Res}(\GI)\bigcap \widehat{\mathcal{GI}}(\xi) $, the following are equivalent:

				(1) $\xi\text{-}\mathcal{G}\text{id} N\leqslant n$.
				
				(2) $\extgi^{n+i}(M,N)=0$, ~$\forall i\geqslant1$,~$\forall M\in\mathcal{C}$.
				
				(3) $\extgi^{n+i}(I,N)=0$, ~$\forall i\geqslant1$,~$\forall I\in \mathcal{I}(\xi)$.
				\end{prop}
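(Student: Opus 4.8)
The plan is to obtain Proposition \ref{TMD} as the formal dual of Proposition \ref{MD}, interchanging throughout the roles of $\GP$ and $\GI$, of $\P$ and $\mathcal{I}(\xi)$, and of the two variables of $\ext$. The implications $(1)\Rightarrow(2)\Rightarrow(3)$ are immediate from the definitions, so all the content lies in $(3)\Rightarrow(1)$; here I write $N$ for the object being characterised (the three conditions are phrased in terms of $N$, so the hypothesis should in fact read $N\in\mathbf{Res}(\GI)\cap\widehat{\mathcal{GI}}(\xi)$).

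First I would fix, using $N\in\mathbf{Res}(\GI)$, a proper $\Ginj$ coresolution $N\longrightarrow\mathbf{H}$ whose resolution $\Mbe$-triangles $\xymatrix{L^{j}\ar[r]&Q^{j}\ar[r]&L^{j+1}\ar@{-->}[r]&}$ in $\xi$ are $\Mcc(-,\GI)$-exact, with $L^{0}=N$ and $Q^{j}\in\GI$. Fixing an arbitrary $I\in\mathcal{I}(\xi)$ and applying $\ext^{\bullet}(I,-)$ to each of these triangles, Lemma \ref{LZHL} produces a long exact sequence. Since $Q^{j}\in\GI$ and $I\in\widehat{\mathcal{I}}(\xi)$, the dual of Lemma \ref{ZHL} gives $\ext^{m}(I,Q^{j})=0$ for all $m\geqslant1$, so the connecting maps yield the dimension-shift isomorphisms $\ext^{m}(I,L^{j+1})\backsimeq\ext^{m+1}(I,L^{j})$ for $m\geqslant1$. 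Iterating over $j=0,\dots,n-1$ gives $\ext^{i}(I,L^{n})\backsimeq\ext^{n+i}(I,N)$ for every $i\geqslant1$.

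Next I would bring in the two hypotheses. The standing assumption $\sup\{\xi\text{-}\mathcal{G}\text{pd}\,M\mid M\in\Mcc\}<\infty$ together with the dual of \citep[Proposition 4.6]{JZP} gives $\xi\text{-pd}\,I=\xi\text{-}\mathcal{G}\text{pd}\,I<\infty$, so $I\in\widehat{\mathcal{P}}(\xi)$. Hence, exactly as in Proposition \ref{MD}, the ordinary $\ext$ agrees with the Gorenstein derived functor: since $I\in\widehat{\mathcal{P}}(\xi)$ we have $\ext^{n+i}(I,N)\backsimeq\extgp^{n+i}(I,N)$, and Theorem \ref{GEXT} identifies this with $\extgi^{n+i}(I,N)$, which vanishes for all $i\geqslant1$ by hypothesis $(3)$. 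Combining with the dimension shift, $\ext^{i}(I,L^{n})=0$ for all $i\geqslant1$ and all $I\in\mathcal{I}(\xi)$, that is, $L^{n}\in\mathcal{I}(\xi)^{\bot}$. As an $n$-th cosyzygy of $N\in\widehat{\mathcal{GI}}(\xi)$, the object $L^{n}$ again lies in $\widehat{\mathcal{GI}}(\xi)$, so the dual of Proposition \ref{XJK}, namely $\GI=\widehat{\mathcal{GI}}(\xi)\cap\mathcal{I}(\xi)^{\bot}$, forces $L^{n}\in\GI$; the coresolution then witnesses $\xi\text{-}\mathcal{G}\text{id}\,N\leqslant n$.

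I expect the only delicate point to be the bookkeeping of the dualizations, not any new idea: one must check that each tool invoked has an honest dual, specifically the dual of Lemma \ref{ZHL}, the dual of \citep[Proposition 4.6]{JZP}, and the dual of Proposition \ref{XJK}. Each holds because $\extri$ is self-dual and $\xi$ is a proper class of $\Mbe$-triangles, so $\GI$ behaves in $\Mcc$ precisely as $\GP$ does in the opposite category. The one identification worth double-checking is the passage $\ext^{n+i}(I,N)\backsimeq\extgi^{n+i}(I,N)$ through Theorem \ref{GEXT}, which presupposes that the relevant objects lie in $\mathcal{GP}^{*}(\xi)$ and $\mathcal{GI}^{*}(\xi)$ (note $I\in\mathbf{Res}(\GP)$ already by Corollary \ref{JYT}); this is handled verbatim as in the proof of Proposition \ref{MD}.
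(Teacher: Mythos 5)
Your proposal is correct and is precisely what the paper intends: the paper offers no written proof of Proposition \ref{TMD} beyond the word ``Dually,'' and your argument is the faithful dualization of the proof of Proposition \ref{MD} (proper $\Ginj$ coresolution, dimension shifting via the dual of Lemma \ref{ZHL}, the identification $\ext^{n+i}(I,N)\backsimeq\extgi^{n+i}(I,N)$ using $\xi\text{-pd}\,I<\infty$, and the dual of Proposition \ref{XJK} to conclude $L^{n}\in\GI$). You also correctly note the typo ($M$ should be $N$ in the hypothesis) and flag the same point of looseness --- the applicability of Theorem \ref{GEXT} --- that is already present in the paper's own proof of Proposition \ref{MD}.
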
			
				
				We Set $\Mbe_{\xi}:=\Mbe|_{\xi}$, that is  for any $A$, $B\in\Mcc$,Let
				\[
				\Mbe_{\xi}(C,A) = \{\delta\in \Mbe(C,A)~|~\exists \xymatrix{A\ar[r]^x&B\ar[r]^y&C\ar@{-->}[r]^{\delta}&}\in\xi\}\]
				and $\mathfrak{s}_{\xi} := \mathfrak{s}|_{\Mbe_{\xi}}$. By \citep[Theorem 3.2]{JDP}, 
				$(\mathcal{C},\Mbe_{\xi} , \mathfrak{s}_{\xi})$ is also a extriangulated category.
				Consider a part of $\Mbe$-triangle in $\xi$  which lie  on the subcategory $\GP$, and set
				$\xi_{\GP}:=\xi|_{\GP}$, i.e. for an $\Mbe$-triangle $\xymatrix{A\ar[r]&B\ar[r]&C\ar@{-->}[r]&}$ in $\xi$
				$$\xymatrix{A\ar[r]&B\ar[r]&C\ar@{-->}[r]&}\in\xi_{\GP}\Longleftrightarrow A, B,C\in\GP.$$
				Set $\Mbe_{\GP}:=\Mbe_{\xi}|_{\GP}$, i.e.
				$$
				\Mbe_{\GP}(C,A) = \{\delta\in \Mbe(C,A)~|~\exists \xymatrix{A\ar[r]^x&B\ar[r]^y&C\ar@{-->}[r]^{\delta}&}\in\xi\text{~and~}A,B,C\in\GP\}$$
				and $\mathfrak{s}_{\GP} := \mathfrak{s}_{\xi}|_{\Mbe_{\GP}}$.
				\begin{lem}
				$(\GP,\Mbe_{\GP},\mathfrak{s}_{\GP})$is a extriangulated category.
				\end{lem}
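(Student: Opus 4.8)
The plan is to deduce the statement from the fact, established in \citep[Theorem 3.2]{JDP}, that $(\Mcc,\Mbe_{\xi},\mathfrak{s}_{\xi})$ is itself an extriangulated category, combined with the closure properties of $\GP$ proved in \citep{JDP}: namely that $\GP$ is closed under isomorphisms, finite direct sums, and under extensions in $\xi$, so that whenever $\xymatrix{A\ar[r]&B\ar[r]&C\ar@{-->}[r]&}$ is an $\Mbe$-triangle in $\xi$ with $A,C\in\GP$, one has $B\in\GP$. Granting extension-closedness, for $A,C\in\GP$ every $\delta\in\Mbe_{\xi}(C,A)$ is realised by a conflation $A\to B\to C$ whose middle term automatically lies in $\GP$; since $\GP$ is closed under isomorphism, the middle term of $\mathfrak{s}_{\xi}(\delta)$ lies in $\GP$ as well. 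Hence $\Mbe_{\GP}(C,A)=\Mbe_{\xi}(C,A)$ for all $A,C\in\GP$, and $\Mbe_{\GP}$ is simply the restriction of the biadditive functor $\Mbe_{\xi}$ to $\GP^{\mathrm{op}}\times\GP$, with $\mathfrak{s}_{\GP}=\mathfrak{s}_{\xi}|_{\Mbe_{\GP}}$. The whole argument then amounts to checking that each axiom for $(\GP,\Mbe_{\GP},\mathfrak{s}_{\GP})$ follows from the corresponding axiom for $(\Mcc,\Mbe_{\xi},\mathfrak{s}_{\xi})$, once one knows that the objects produced by that axiom stay inside $\GP$.

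For (ET1) I would first verify that $\Mbe_{\GP}(C,A)$ is an abelian subgroup of $\Mbe(C,A)$ stable under the functorial actions. The zero element lies in it because the split conflation has middle term $A\oplus C\in\GP$; for $\delta_1,\delta_2\in\Mbe_{\GP}(C,A)$ the sum $\delta_1+\delta_2$ already lies in the subgroup $\Mbe_{\xi}(C,A)$, and its realisation has middle term in $\GP$ by extension-closedness, so $\delta_1+\delta_2\in\Mbe_{\GP}(C,A)$. Given morphisms $a\colon A\to A'$ and $c\colon C'\to C$ between objects of $\GP$, the extensions $a_*\delta$ and $c^*\delta$ lie in $\Mbe_{\xi}$ because $\xi$ is closed under cobase and base change, and their middle terms lie in $\GP$ again by extension-closedness, whence $a_*\delta,c^*\delta\in\Mbe_{\GP}$; biadditivity is then inherited from that of $\Mbe_{\xi}$. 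For (ET2), the realisation $\mathfrak{s}_{\GP}=\mathfrak{s}_{\xi}|_{\Mbe_{\GP}}$ sends each $\delta\in\Mbe_{\GP}(C,A)$ to an equivalence class of sequences all of whose terms lie in $\GP$, and both additivity and the realisation (commutative-square) property are inherited verbatim from $\mathfrak{s}_{\xi}$, using that $\GP$ is a full subcategory so that the morphism $b$ supplied by $\mathfrak{s}_{\xi}$ is automatically a morphism of $\GP$.

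Axioms (ET3) and (ET3)$^{\mathrm{op}}$ are then immediate: any commutative square between $\Mbe$-triangles in $\xi_{\GP}$ is in particular a square in $(\Mcc,\Mbe_{\xi},\mathfrak{s}_{\xi})$, so the required morphism $(a,c)\colon\delta\to\delta'$ realised by $(a,b,c)$ exists there, and $b$ is a morphism in the full subcategory $\GP$. The one genuine point, which I expect to be the main obstacle, is (ET4) (and dually (ET4)$^{\mathrm{op}}$). Starting from $\delta\in\Mbe_{\GP}(D,A)$ and $\delta'\in\Mbe_{\GP}(F,B)$, I would apply (ET4) in $(\Mcc,\Mbe_{\xi},\mathfrak{s}_{\xi})$ to obtain the object $E$, the octahedral diagram, and the extension $\delta''\in\Mbe_{\xi}(E,A)$. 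Here $A,B,D,F\in\GP$ by hypothesis, and the conflation $\xymatrix{D\ar[r]&E\ar[r]&F\ar@{-->}[r]&}$ realising $f'_*\delta'$ forces $E\in\GP$ by extension-closedness; consequently $\delta''$, $f'_*\delta'$ and the remaining extensions occurring in the diagram all lie in $\Mbe_{\GP}$, so the entire (ET4)-datum lives in $\GP$, while the compatibilities (i)--(iii) hold because they already hold in $(\Mcc,\Mbe_{\xi},\mathfrak{s}_{\xi})$. Thus the crux of the proof is precisely the extension-closedness of $\GP$, which guarantees that none of the new objects produced by the axioms leaves the subcategory; everything else is a direct restriction of the relative extriangulated structure on $(\Mcc,\Mbe_{\xi},\mathfrak{s}_{\xi})$.
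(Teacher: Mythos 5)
Your proposal is correct and follows essentially the same route as the paper: the author also declares (ET1)--(ET3)$^{\mathrm{op}}$ obvious by restriction from $(\Mcc,\Mbe_{\xi},\mathfrak{s}_{\xi})$ and reduces (ET4) to the extension-closedness of $\GP$, using the conflation $D\to E\to F$ realizing $f'_*\delta'$ to conclude $E\in\GP$. Your additional observation that $\Mbe_{\GP}(C,A)=\Mbe_{\xi}(C,A)$ for $A,C\in\GP$ is a nice way of packaging why the restriction is harmless, but it is the same underlying argument.
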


				\begin{proof}
					$(\rm{ET1})$, $(\rm{ET2})$,$(\rm{ET3})$ and $(\rm{ET3})^{\mathrm{op}}$ are obvious.  We only 
					prove the $(\rm{ET4})$ and $(\rm{ET4})^{\mathrm{op}}$. Let $\delta\in\Mbe_{\GP}(D,A)$ and 
					 $\delta'\in\Mbe_{\GP}(F,B)$ are $\Mbe_{\GP}$-extensions respectively realized by
					$$\xymatrix{A\ar[r]^f&B\ar[r]^{f'}&D}\text{and}\xymatrix{B\ar[r]^g&C\ar[r]^{g'}&F}.$$
					 Since $(\mathcal{C},\Mbe_{\xi} , \mathfrak{s}_{\xi})$ is a extriangulated category, 
					 there exist a commutative diagram in $\Mcc$ with  $E\in\Mcc$
					$$
					\xymatrix{A\ar[r]^f\ar@{=}[d]&B\ar[r]^{f'}\ar[d]_g&D\ar[d]^d\\
					A\ar[r]^h&C\ar[r]^{h'}\ar[d]_{g'}&E\ar[d]^e\\
					&F\ar@{=}[r]&F
					}
					$$
					 and a $\Mbe_{\xi}$-extension $\delta''\in\Mbe_{\xi}(E,A)$ realized 
					 by $\xymatrix{A\ar[r]^h&C\ar[r]^{h'}&E}$ 
					 such that 
					
					$(\textrm{i})$ $D\stackrel{d}{\longrightarrow}E\stackrel{e}{\longrightarrow}F$ realizes $f'_*\delta'$,
					
					$(\textrm{ii})$ $d^*\delta''=\delta$,
					
					$(\textrm{iii})$ $f_*\delta''=e^*\delta$.
					
					\noindent Note that $\xymatrix{D\ar[r]^d&E\ar[r]^{e}&F\ar@{--}[r]^{f'_*\delta'}&}$ belongs to 
					$\xi$ with  $D, F\in\GP$. So $E\in\GP$,  then $\delta''\in\Mbe_{\GP}(E,A)$, $f'_*\delta'\in\Mbe_{\GP}(F,D)$, i.e. 
					$(\rm{ET4})$ is satisfied. Similarly,$(\rm{ET4})^{\mathrm{op}}$ holds.
					
					Thus $(\GP,\Mbe_{\GP},\mathfrak{s}_{\GP})$ is a extriangulated category.
					\end{proof}

					\begin{lem}
						$\xi_{\GP}$ is a proper class of $(\GP,\Mbe_{\GP},\mathfrak{s}_{\GP})$.
						\end{lem}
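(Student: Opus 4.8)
The plan is to verify directly that $\xi_{\GP}$ satisfies the three axioms of a proper class (Definition \ref{ZL}) inside the extriangulated category $(\GP,\Mbe_{\GP},\mathfrak{s}_{\GP})$. The entire argument rests on two closure properties of $\GP$ that are already available and were used just above: $\GP$ is closed under finite coproducts and direct summands, and $\GP$ is closed under extensions, i.e. whenever $X\to Y\to Z\dashrightarrow$ lies in $\xi$ with $X,Z\in\GP$ one has $Y\in\GP$ (this is precisely the deduction ``$D,F\in\GP\Rightarrow E\in\GP$'' appearing in the proof of the preceding lemma, coming from \cite{JDP}). Since every $\xi_{\GP}$-triangle is in particular a $\xi$-triangle, the fact that $\xi$ is already a proper class will hand us the required triangle in $\xi$, and these two closure properties will then guarantee that all three terms again lie in $\GP$.

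First I would note that $\xi_{\GP}$ is closed under isomorphisms: an $\Mbe_{\GP}$-triangle isomorphic to a member of $\xi_{\GP}$ lies in $\xi$ (as $\xi$ is closed under isomorphisms) and has all three terms isomorphic to objects of $\GP$, hence in $\GP$. For axiom~(1), the coproduct of two $\xi_{\GP}$-triangles is a $\xi$-triangle whose terms are finite coproducts of $\GP$-objects, hence again in $\GP$; and for $A,C\in\GP$ the split triangle $A\to A\oplus C\to C$ has middle term $A\oplus C\in\GP$ and lies in $\Delta_0\subseteq\xi$, so $\Delta_0\subseteq\xi_{\GP}$. For axiom~(2), take a $\xi_{\GP}$-triangle $A\xrightarrow{x}B\xrightarrow{y}C\dashrightarrow^{\delta}$ and a morphism $c:C'\to C$ in $\GP$; since $\xi$ is closed under base change, the realization $A\to B'\to C'\dashrightarrow^{c^*\delta}$ lies in $\xi$, and its outer terms $A,C'$ are in $\GP$, so closure under extensions forces $B'\in\GP$ and the triangle lies in $\xi_{\GP}$. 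Closure under cobase change is dual, applied to a morphism $a:A\to A'$ in $\GP$ and the triangle $A'\to B'\to C\dashrightarrow^{a_*\delta}$ whose outer terms $A',C$ are in $\GP$.

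The main point is axiom~(3), saturation, and it is also the one spot where the order of the argument matters. In the situation of Lemma \ref{BH}(1), suppose $A_2\to B_2\to C\dashrightarrow^{\delta_2}$ and $A_1\to M\to B_2\dashrightarrow^{y_2^*\delta_1}$ belong to $\xi_{\GP}$; in particular $C\in\GP$ (from the first) and $A_1\in\GP$ (from the second). Because both are $\xi$-triangles and $\xi$ is itself saturated, the triangle $A_1\to B_1\to C\dashrightarrow^{\delta_1}$ lies in $\xi$. I would then read off $B_1\in\GP$ \emph{directly from this output triangle}: its outer terms $A_1$ and $C$ already lie in $\GP$, so closure under extensions yields $B_1\in\GP$, whence $A_1\to B_1\to C\dashrightarrow^{\delta_1}$ belongs to $\xi_{\GP}$. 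The obstacle to avoid is the tempting but useless route of trying to deduce $B_1\in\GP$ from the column triangle $A_2\to M\to B_1$ of Lemma \ref{BH}(1): that would require $\GP$ to be closed under cocones of inflations (the third term from the first two), which is not available; passing instead through the conclusion triangle $A_1\to B_1\to C$ sidesteps this difficulty entirely. Having verified (1)--(3) together with closure under isomorphisms, $\xi_{\GP}$ is a proper class of $(\GP,\Mbe_{\GP},\mathfrak{s}_{\GP})$.
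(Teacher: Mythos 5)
Your proof is correct and follows essentially the same route as the paper: axioms (1) and (2) are routine (the paper simply declares them ``easy to check''), and for saturation both you and the paper pass the hypothesis triangles to the ambient saturated class $\xi$ to obtain $A_1\to B_1\to C\dashrightarrow$ in $\xi$, then conclude $B_1\in\GP$ from $A_1,C\in\GP$ by closure of $\GP$ under extensions. Your explicit warning against trying to extract $B_1$ from the column $A_2\to M\to B_1$ is a sensible clarification but does not change the argument.
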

						\begin{proof}
						It is easy to check $\xi_{\GP}$ satisfies (1) and (2) of Definition \ref{ZL}. Now we show $\xi_{\GP}$ is saturated.
						
						In the diagram of ~\ref{BH}(1)
						\[
						\xymatrix{
						&{A_2}\ar[d]_{m_2}\ar@{=}[r]&{A_2}\ar[d]^{x_2}\\
						{A_1}\ar@{=}[d]\ar[r]^{m_1}&M\ar[r]^{e_1}\ar[d]_{e_2}&{B_2}\ar[d]^{y_2}\\
						{A_1}\ar[r]^{x_1}&{B_1}\ar[r]^{y_1}&C
						}
						\]
						When $\mathbb{E}$-triangle $\xymatrix{A_2\ar[r]^{x_2}&B_2\ar[r]^{y_2}&C\ar@{-->}[r]^{\delta_2}&}$ and 
						$\xymatrix{A_1\ar[r]^{m_1}&M\ar[r]^{m_1}&B_2\ar@{-->}[r]^{y_2^*\delta_1}&}$ both belong to $\xi_{\GP}$, 
						we claim the $\mathbb{E}$-triangle $\xymatrix{A_1\ar[r]^{x_1}&B_1\ar[r]^{y_1}&C\ar@{-->}[r]^{\delta_1}&}$
						is in $\xi_{\GP}$. In fact, regard the diagram as a commutative diagram in $(\mathcal{C},\Mbe_{\xi} , \mathfrak{s}_{\xi})$ 
						 Since $\xymatrix{A_1\ar[r]^{x_1}&B_1\ar[r]^{y_1}&C\ar@{-->}[r]^{\delta_1}&}$ is in $\xi$ with 
						 $A_1,C\in\GP$, so $B_1\in\GP$. Thus $\mathbb{E}$-triangle
						 $\xymatrix{A_1\ar[r]^{x_1}&B_1\ar[r]^{y_1}&C\ar@{-->}[r]^{\delta_1}&}$ belongs to $\xi_{\GP}$.
						Then we can say $\xi_{\GP}$ is a proper class of the  extriangulated category $(\GP,\Mbe_{\GP},\mathfrak{s}_{\GP})$.
						\end{proof}
						
						We denoted $(\GP,\Mbe_{\GP},\mathfrak{s}_{\GP})$ by $\GP$ for short.
						
						\begin{thm}\label{TF}
						 If $M$ is  $\xi$-projective in $\Mcc$ , then $M$ is  $\xi_{\GP}$-injective in  $\GP$.
						\end{thm}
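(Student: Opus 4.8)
The plan is to deduce the statement directly from the two homological lemmas already at hand, namely Lemma~\ref{LZHL} and Lemma~\ref{ZHL}, so that the argument stays entirely formal. First I would record that a $\xi$-projective object is automatically $\xi$-$\mathcal{G}$projective (it admits the complete $\mathcal{P}(\xi)$-exact resolution $\cdots\to 0\to M\xrightarrow{1}M\to 0\to\cdots$), whence $M\in\P\subseteq\GP$. In particular $M$ is a genuine object of the extriangulated subcategory $\GP$, so it makes sense to ask whether $M$ is $\xi_{\GP}$-injective; moreover $\xi$-pd$M=0$, so $M\in\widehat{\mathcal{P}}(\xi)$, which is exactly the finiteness hypothesis required to invoke Lemma~\ref{ZHL}.

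Unwinding the definition of $\xi_{\GP}$-injectivity, it remains to show that for every $\Mbe$-triangle $A\stackrel{x}{\longrightarrow}B\stackrel{y}{\longrightarrow}C\stackrel{\delta}\dashrightarrow$ in $\xi_{\GP}$ (that is, an $\Mbe$-triangle in $\xi$ with $A,B,C\in\GP$) the induced sequence
$$0\longrightarrow\Mcc(C,M)\longrightarrow\Mcc(B,M)\longrightarrow\Mcc(A,M)\longrightarrow0$$
is exact. Since this triangle in particular lies in $\xi$, I would apply the contravariant long exact sequence of Lemma~\ref{LZHL} with second argument $M$, obtaining
$$0\longrightarrow\ext^0(C,M)\longrightarrow\ext^0(B,M)\longrightarrow\ext^0(A,M)\longrightarrow\ext^1(C,M)\longrightarrow\cdots.$$
Now, because $A,B,C\in\GP$ and $M\in\widehat{\mathcal{P}}(\xi)$, Lemma~\ref{ZHL} identifies $\ext^0(-,M)$ with $\Mcc(-,M)$ on each of $A,B,C$ and forces $\ext^1(C,M)=0$. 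Substituting these facts collapses the displayed $\ext$-sequence precisely onto the desired short exact Hom-sequence, which proves that $M$ is $\xi_{\GP}$-injective in $\GP$. Note the leading $0$ already supplies injectivity of $\Mcc(C,M)\to\Mcc(B,M)$ and the vanishing of $\ext^1(C,M)$ supplies surjectivity of $\Mcc(B,M)\to\Mcc(A,M)$.

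The one point demanding care, and the only real obstacle, is the passage from the $\ext$-sequence to the Hom-sequence: I must ensure that the isomorphisms $\ext^0(-,M)\backsimeq\Mcc(-,M)$ are \emph{natural} in the first variable, so that they intertwine the maps induced by $x$ and $y$ and genuinely transport exactness rather than merely matching the groups termwise. This is settled by combining the two lemmas: Lemma~\ref{LZHL} provides a natural comparison map $\Mcc(-,-)\to\ext^0(-,-)$, and Lemma~\ref{ZHL} says this map is an isomorphism on the objects $A,B,C$ (since they lie in $\GP$ and $M\in\widehat{\mathcal{P}}(\xi)$); a natural transformation that is pointwise an isomorphism is a natural isomorphism, yielding the required commutative ladder between the two sequences. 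With naturality in place the substitution in the previous paragraph is rigorous, and nothing beyond the cited lemmas is needed.
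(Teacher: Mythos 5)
Your proposal is correct and follows essentially the same route as the paper: apply the contravariant long exact sequence of Lemma~\ref{LZHL} to the given $\Mbe$-triangle in $\xi_{\GP}$, then use Lemma~\ref{ZHL} (with $A,B,C\in\GP$ and $M\in\P\subseteq\widehat{\mathcal{P}}(\xi)$) to identify $\ext^0(-,M)$ with $\Mcc(-,M)$ and to kill $\ext^1(C,M)$. Your added remarks on why $M$ lies in $\GP$ and on the naturality of the comparison map $\Mcc(-,-)\to\ext^0(-,-)$ are points the paper leaves implicit, and they are handled correctly.
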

						\begin{proof}
						For any $\Mbe$-triangle $\xymatrix{A\ar[r]&B\ar[r]&C\ar@{-->}[r]&}$$\in\xi_{\GP}$, There exists long exact sequence
						$$
						0\longrightarrow\ext^0(C,M)\longrightarrow\ext^0(B,M)\longrightarrow\ext^0(A,M)\longrightarrow\ext^1(C,M).
						$$
						Note that $A$, $B$, $C$ are $\Gproj$ and $M\in\P$, by \ref{ZHL}, we have short exact sequence
						$$
						0\longrightarrow\Mcc(C,M)\longrightarrow\Mcc(B,M)\longrightarrow\Mcc(A,M)\longrightarrow0.
						$$
						So $M$ is $\xi_{\GP}$-injective in $\GP$.
						\end{proof}
						\begin{thm}\label{ZM}
						If $M$ is $\xi_{\GP}$-projective in $\GP$, then $M$ is $\xi$-projective in $\Mcc$.
						\end{thm}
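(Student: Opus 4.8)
The plan is to realize $M$ as the deflation target of a single resolution $\Mbe$-triangle that already lives inside the subcategory $\GP$, and then use the hypothesis to split this triangle; the splitting will exhibit $M$ as a direct summand of a genuinely $\xi$-projective object, which is what we want.

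First I would unwind the hypothesis. Being $\xi_{\GP}$-projective in $\GP$ in particular forces $M$ to be an object of $\GP$, i.e. $M$ is $\xi$-$\mathcal{G}$projective. Hence $M$ occurs as a syzygy $K_n$ of some complete $\xi$-projective resolution, and the corresponding resolution $\Mbe$-triangle gives a $\Mcc(-,\P)$-exact $\Mbe$-triangle
$$\xymatrix{K\ar[r]^{s}&P\ar[r]^{t}&M\ar@{-->}[r]^{\delta}&}$$
in $\xi$ with $P\in\P$ and $K\in\GP$. The key observation is that all three terms are $\xi$-$\mathcal{G}$projective (here one uses $\P\subseteq\GP$), so this $\Mbe$-triangle in fact belongs to $\xi_{\GP}$.

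Next I would feed this $\xi_{\GP}$-triangle into the definition of $\xi_{\GP}$-projectivity. Since $\GP$ is a full subcategory, its Hom groups agree with those of $\Mcc$, so $\xi_{\GP}$-projectivity of $M$ makes
$$0\rightarrow\Mcc(M,K)\rightarrow\Mcc(M,P)\stackrel{\Mcc(M,t)}{\longrightarrow}\Mcc(M,M)\rightarrow0$$
exact. In particular $\Mcc(M,t)$ is surjective, so lifting $1_M$ produces $\tilde{s}\in\Mcc(M,P)$ with $t\tilde{s}=1_M$; that is, $t$ is a retraction. Applying the ``in particular'' clause of Lemma \ref{FJ} (to the identity morphism of the triangle, with $\delta=\delta'$) then shows $\delta$ is split, whence $P\simeq K\oplus M$ and $M$ is a direct summand of $P\in\P$. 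Since $\P$ is closed under direct summands --- a summand of a $\xi$-projective is again $\xi$-projective, because $\Mcc(M\oplus K,-)\simeq\Mcc(M,-)\oplus\Mcc(K,-)$ splits the defining exact sequences --- we conclude $M\in\P$, i.e. $M$ is $\xi$-projective in $\Mcc$.

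The step needing the most care is the very first one: checking that the resolution $\Mbe$-triangle with $M$ in the deflation position really lies in $\xi_{\GP}$, i.e. that its kernel term $K$ is again $\xi$-$\mathcal{G}$projective. Choosing the triangle with $M$ as the \emph{target} of a deflation (rather than as the source of an inflation) is exactly what makes $\xi_{\GP}$-projectivity yield the surjectivity of $\Mcc(M,t)$, and hence the splitting; this is the mirror image of the argument establishing Theorem \ref{TF}, with the variance of the Hom functor interchanged.
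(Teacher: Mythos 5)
Your proof is correct and follows essentially the same route as the paper: produce an $\Mbe$-triangle $K\to P\to M$ lying in $\xi_{\GP}$ with $P\in\P$, apply the $\xi_{\GP}$-projectivity of $M$ to the resulting Hom-sequence so that $1_M$ lifts and Lemma \ref{FJ} splits the triangle, and conclude that $M$ is a direct summand of $P$ and hence $\xi$-projective. The only (cosmetic) difference is in how $K\in\GP$ is secured: you read it off the resolution $\Mbe$-triangle of a complete $\xi$-projective resolution defining $M$ as $\Gproj$, while the paper starts from an arbitrary deflation $P\to M$ given by ``enough $\xi$-projectives'' and invokes closure of $\GP$ under kernels of such deflations.
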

\begin{proof}
							Since $\Mcc$ has enough $\xi$-projective objects, there exists $P\in\P$ such that 
							$\Mbe$-triangle $$\xymatrix{K\ar[r]&P\ar[r]&M\ar@{-->}[r]&}$$
							belongs to $\xi$. Note that $P$ and $M$ are both $\Gproj$, then so is $K$. 
							Then the $\Mbe$-triangle $\xymatrix{K\ar[r]&P\ar[r]&M\ar@{-->}[r]&}$ is in $\xi_{\GP}$. 
							Since $M$ is  $\xi_{\GP}$-projective in $\GP$, there is a short exact sequence
							$$
							0\longrightarrow\Mcc(M,K)\longrightarrow\Mcc(M,P)\longrightarrow\Mcc(M,M)\longrightarrow0.
							$$
							By Lemma \ref{FJ}, we get $P\backsimeq K\oplus M$, so $M\in\P$.
							\end{proof}
							\begin{cor}\label{GYT}
							All the $\xi_{\GP}$-projective objects are $\xi_{\GP}$-injective objects in $\GP$.
							\end{cor}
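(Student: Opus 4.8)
The plan is to obtain this corollary as a formal consequence of the two immediately preceding theorems, which together identify the internal one-sided relative notions in $\GP$ with ordinary $\xi$-projectivity in $\Mcc$. So first I would fix an arbitrary object $M$ that is $\xi_{\GP}$-projective in the extriangulated category $(\GP,\Mbe_{\GP},\mathfrak{s}_{\GP})$ (abbreviated $\GP$), and the goal is to show that the very same $M$ is $\xi_{\GP}$-injective there.

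The first step is to upgrade the internal projectivity of $M$ to genuine $\xi$-projectivity in the ambient category, which is exactly Theorem \ref{ZM}. Since $\Mcc$ has enough $\xi$-projectives, one has an $\Mbe$-triangle $K\longrightarrow P\longrightarrow M\dashrightarrow$ in $\xi$ with $P\in\P$; as $P$ and $M$ are both $\xi$-$\mathcal{G}$projective, so is $K$, whence this triangle lies in $\xi_{\GP}$. Feeding $M$ into its own $\xi_{\GP}$-projectivity splits the triangle, so $M$ is a direct summand of $P$ and therefore $M\in\P$. Thus from $M$ being $\xi_{\GP}$-projective we conclude $M\in\P$.

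The second step reverses direction via Theorem \ref{TF}: every $\xi$-projective object of $\Mcc$ is $\xi_{\GP}$-injective in $\GP$. Concretely, for an arbitrary $\Mbe$-triangle $\xymatrix{A\ar[r]&B\ar[r]&C\ar@{-->}[r]&}$ in $\xi_{\GP}$ one invokes the long exact sequence of Lemma \ref{LZHL} and the vanishing of Lemma \ref{ZHL} (applicable because $A,B,C$ are $\xi$-$\mathcal{G}$projective and $M\in\P\subseteq\widehat{\mathcal{P}}(\xi)$) to collapse it into a short exact sequence of Hom-groups $0\to\Mcc(C,M)\to\Mcc(B,M)\to\Mcc(A,M)\to0$, which is precisely the $\xi_{\GP}$-injectivity condition. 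Applying this to our $M\in\P$ completes the argument and shows every $\xi_{\GP}$-projective object is $\xi_{\GP}$-injective.

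I do not anticipate a genuine obstacle, since the statement is just the composition $M\ \xi_{\GP}\text{-projective}\Rightarrow M\in\P\Rightarrow M\ \xi_{\GP}\text{-injective}$ of Theorems \ref{ZM} and \ref{TF}. The only point needing care is the bookkeeping that $M$ meets the hypotheses of each theorem at the moment it is invoked: $\xi_{\GP}$-projective on entry to Theorem \ref{ZM}, and $\xi$-projective (hence of finite $\xi$-projective dimension, so that Lemma \ref{ZHL} applies) on entry to Theorem \ref{TF}.
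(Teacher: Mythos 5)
Your proof is correct and is exactly the argument the paper intends: the corollary is stated without a separate proof precisely because it is the composition of Theorem \ref{ZM} ($\xi_{\GP}$-projective $\Rightarrow$ $\xi$-projective in $\Mcc$) with Theorem \ref{TF} ($\xi$-projective in $\Mcc$ $\Rightarrow$ $\xi_{\GP}$-injective in $\GP$). Your bookkeeping of the hypotheses at each step matches the paper's two proofs.
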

							\begin{defn}
A extriangulated category $\Mcc$ with proper class $\xi$ is called a $\xi$-Frobenius category, if there are enough $\xi$-projective 
objects and $\xi$-injective objects with $\P=\mathcal{I}(\xi)$.

							\end{defn}
							\begin{thm}
							If $\Mcc$is a $\xi$-Frobenius category, then so is $\GP$.
							\end{thm}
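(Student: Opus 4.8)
The plan is to verify directly the three defining properties of a $\xi_{\GP}$-Frobenius category for the extriangulated category $\GP=(\GP,\Mbe_{\GP},\mathfrak{s}_{\GP})$ equipped with the proper class $\xi_{\GP}$: namely that $\GP$ has enough $\xi_{\GP}$-projectives and enough $\xi_{\GP}$-injectives, and that the classes $\mathcal{P}(\xi_{\GP})$ and $\mathcal{I}(\xi_{\GP})$ of $\xi_{\GP}$-projective and $\xi_{\GP}$-injective objects of $\GP$ coincide. Throughout, the Frobenius hypothesis on $\Mcc$ is used through the identification $\P=\mathcal{I}(\xi)$, which is what lets me keep every auxiliary object manufactured inside $\GP$.

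First I would pin down the $\xi_{\GP}$-projectives by proving $\mathcal{P}(\xi_{\GP})=\P$. One inclusion is exactly Theorem \ref{ZM}. For the reverse, take $P\in\P$; since $\P\subseteq\GP$ it is an object of $\GP$, and every $\xi_{\GP}$-triangle is in particular an $\Mbe$-triangle in $\xi$ with the same hom-groups, so the exactness coming from $\xi$-projectivity of $P$ in $\Mcc$ persists in $\GP$, giving $P\in\mathcal{P}(\xi_{\GP})$. Next comes the key point $\mathcal{P}(\xi_{\GP})=\mathcal{I}(\xi_{\GP})$. By Theorem \ref{TF} combined with the previous step, $\P=\mathcal{P}(\xi_{\GP})\subseteq\mathcal{I}(\xi_{\GP})$, which also recovers Corollary \ref{GYT}. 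For the reverse inclusion I run the dual of Theorem \ref{ZM}: given $N\in\mathcal{I}(\xi_{\GP})$, use enough $\xi$-injectives of $\Mcc$ to obtain an $\Mbe$-triangle $N\to I\to L$ in $\xi$ with $I\in\mathcal{I}(\xi)$; the Frobenius hypothesis gives $I\in\P\subseteq\GP$, and since $N,I\in\GP$ the cokernel term $L$ is again in $\GP$, so this triangle lies in $\xi_{\GP}$. Applying $\xi_{\GP}$-injectivity of $N$ to it makes $\Mcc(I,N)\to\Mcc(N,N)$ surjective, so the inflation $N\to I$ is a section; by Lemma \ref{FJ} the triangle splits, $I\simeq N\oplus L$, and $N$ is a direct summand of the $\xi$-projective $I$, whence $N\in\P$. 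Thus $\mathcal{I}(\xi_{\GP})=\P=\mathcal{P}(\xi_{\GP})$.

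It then remains to produce enough projectives and injectives. For $M\in\GP$, enough $\xi$-projectives in $\Mcc$ gives an $\Mbe$-triangle $K\to P\to M$ in $\xi$ with $P\in\P$; closure of $\GP$ under the kernel term (the same fact already invoked in the proof of Theorem \ref{ZM}) forces $K\in\GP$, so the triangle lies in $\xi_{\GP}$ with $P\in\mathcal{P}(\xi_{\GP})$, giving enough $\xi_{\GP}$-projectives. Dually, enough $\xi$-injectives together with $\mathcal{I}(\xi)=\P$ yields $M\to I\to L$ in $\xi$ with $I\in\P=\mathcal{I}(\xi_{\GP})$ and, by cokernel closure, $L\in\GP$, so the triangle lies in $\xi_{\GP}$ and provides the required $\xi_{\GP}$-injective coresolution step. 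Assembling these three facts shows $\GP$ is $\xi_{\GP}$-Frobenius.

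I expect the main obstacle to be precisely the closure facts for $\GP$ inside triangles of $\xi$: that whenever two of the three terms of an $\Mbe$-triangle in $\xi$ lie in $\GP$ the third does too, and in particular that the cokernel of an inflation of a $\xi$-$\mathcal{G}$projective object into a $\xi$-projective object is again $\xi$-$\mathcal{G}$projective. These underpin both the dual-of-\ref{ZM} argument and the construction of the $\xi_{\GP}$-injective coresolutions, and they are the $\xi$-relative analogues of the standard Gorenstein closure properties; this is the step where the Frobenius hypothesis $\P=\mathcal{I}(\xi)$ is genuinely used, since it is what guarantees that the cosyzygy $L$ arises as a cokernel into an object of $\P\subseteq\GP$.
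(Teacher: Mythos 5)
Your strategy is sound and genuinely different from the paper's: you compute $\mathcal{P}(\xi_{\GP})$ and $\mathcal{I}(\xi_{\GP})$ directly and identify both with $\P$, whereas the paper first proves $\GP=\GI$ (using $\P=\mathcal{I}(\xi)$ to show that every complete $\xi$-projective resolution is a complete $\xi$-injective coresolution and conversely) and then concludes from Corollary \ref{GYT} together with its dual applied to $\GI=\GP$. However, your argument has a genuine gap exactly at the point you yourself flag as ``the main obstacle,'' and the closure fact you invoke there is false in the generality in which you state it. The claim that whenever two of the three terms of an $\Mbe$-triangle in $\xi$ lie in $\GP$ the third does too would make $\GP$ closed under cones of inflations; but $\GP$ is only closed under extensions and under taking the first term when the other two are $\Gproj$. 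The cokernel of an inflation between $\Gproj$ objects need not be $\Gproj$, so ``since $N,I\in\GP$ the cokernel term $L$ is again in $\GP$'' is not a citation-free step, and both your dual-of-Theorem-\ref{ZM} argument and your construction of enough $\xi_{\GP}$-injectives rest on it.

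The gap is repairable, and the cleanest repair avoids the arbitrary $\xi$-injective embedding altogether. Since $N\in\GP$, the complete $\xi$-projective resolution defining $N$ already provides a $\Mcc(-,\P)$-exact $\Mbe$-triangle $N\rightarrow P\rightarrow K$ in $\xi$ with $P\in\P$ and $K\in\GP$ by construction; under the Frobenius hypothesis $P\in\P=\mathcal{I}(\xi)$, so this triangle lies in $\xi_{\GP}$, its middle term is $\xi_{\GP}$-injective by Theorem \ref{TF}, and it can be fed directly into your splitting argument via Lemma \ref{FJ} and into the enough-injectives step. Alternatively, if you start from an arbitrary triangle $N\rightarrow I\rightarrow L$ with $I\in\mathcal{I}(\xi)=\P$, you must actually prove $L\in\GP$: that triangle gives $\Gpd L\leqslant 1$, and for any $Q\in\P$ one has $\ext^1(L,Q)\cong\mathrm{coker}\bigl(\Mcc(I,Q)\rightarrow\Mcc(N,Q)\bigr)=0$ because $Q$ is also $\xi$-injective, whence $L\in\GP$ by Proposition \ref{XJK} (or by the earlier proposition on objects with $\Gpd\leqslant 1$ and $\ext^1(-,\P)=0$). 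With either repair your proof goes through, and it is in fact more explicit than the paper's about where the enough-$\xi_{\GP}$-projectives and enough-$\xi_{\GP}$-injectives conditions come from, a point the paper's own proof leaves implicit.
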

							\begin{proof}
							First, We claim $\GP=\GI$ when If $\Mcc$is a $\xi$-Frobenius category.
							 Since $\P=\mathcal{I}(\xi)$, we only need to prove any complete $\xi$-projective resolution is a 
							 completed $\xi$-injective coresolution and any complete $\xi$-injective coresolution is a 
							 completed $\xi$-projective resolution.
							 Let
							$$
							\mathbf{P}: \xymatrix{ \cdots\ar[r]&P_1\ar[r]^{d_1}&P_{0}\ar[r]^{d_0}&P_{-1}\ar[r]&{\cdots}\ ,}
							$$
							is a completed $\xi$-projective resolution. For any integer $n$, there exists a $\Mcc(-,\P)$-exact $\xi$-resolution
							$\Mbe$-triangle $\xymatrix{K_{n+1}\ar[r]&P_n\ar[r]&K_n\ar@{-->}[r]&}$. By Lemma\ref{LZHL},
							we can get short exact sequence
							$$
							0\longrightarrow\Mcc(I,K_{n+1})\longrightarrow\Mcc(I,P_{n})\longrightarrow\Mcc(I,K_{n})\longrightarrow0.
							$$
							So $\mathbf{P}$ is a completed $\xi$-injective coresolution. Similarly, we can get any complete $\xi$-injective coresolution is a 
							completed $\xi$-projective resolution.

							Second, by Corollary \ref{GYT} and its dual, we have that  in category $\GP=(\GP,\Mbe_{\GP},\mathfrak{s}_{\GP})$, 
							 all the $\xi_{\GP}$-projective objects are $\xi_{\GP}$-injective  and 
							 in category $\GI=(\GI,\Mbe_{\GI},\mathfrak{s}_{\GI})$, all the $\xi_{\GI}$-injective objects are 
							 $\xi_{\GI}$-projective.
							Since $\GP=\GI$, the theorem is proved.
							\end{proof}
							
							On the contrary, by Theorem\ref{TF} and \ref{ZM}, we get 
							\begin{thm}
							If $\GP$ is a $\xi_{\GP}$-Frobenius category, then the $\xi$-projective objects in $\Mcc$ are consistent with the $\xi_{\GP}$-projective objects in $\GP$.
							\end{thm}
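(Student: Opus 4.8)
The plan is to establish the claimed coincidence as a pair of mutually inverse inclusions, using Theorems \ref{TF} and \ref{ZM} as the two halves and letting the Frobenius hypothesis serve as the bridge that exchanges $\xi_{\GP}$-injectivity for $\xi_{\GP}$-projectivity inside $\GP$. Concretely, I want to show that the class of $\xi$-projective objects of $\Mcc$ and the class of $\xi_{\GP}$-projective objects of $\GP$ are the same, so I will prove each is contained in the other.

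First I would treat the inclusion ``$\xi$-projective in $\Mcc$ $\Rightarrow$ $\xi_{\GP}$-projective in $\GP$''. Let $M$ be a $\xi$-projective object of $\Mcc$. Since $\P\subseteq\GP$, the object $M$ already lives in $\GP$, so it is legitimate to ask about its $\xi_{\GP}$-(co)projectivity; this preliminary remark is worth stating explicitly, as it is what makes both notions applicable to the same $M$. By Theorem \ref{TF}, $M$ is $\xi_{\GP}$-injective in $\GP$. The hypothesis that $(\GP,\Mbe_{\GP},\mathfrak{s}_{\GP})$ is a $\xi_{\GP}$-Frobenius category means precisely that the class of $\xi_{\GP}$-projective objects coincides with the class of $\xi_{\GP}$-injective objects. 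Reading this equality in the direction that turns $\xi_{\GP}$-injectivity into $\xi_{\GP}$-projectivity, we conclude that $M$ is $\xi_{\GP}$-projective in $\GP$.

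For the reverse inclusion I would simply invoke Theorem \ref{ZM}, which already asserts that any $\xi_{\GP}$-projective object of $\GP$ is $\xi$-projective in $\Mcc$; this half needs no appeal to the Frobenius condition. Combining the two inclusions gives the asserted identification of the $\xi$-projective objects of $\Mcc$ with the $\xi_{\GP}$-projective objects of $\GP$. The argument is essentially formal, and the only point requiring genuine care is to orient the Frobenius hypothesis correctly: it is exactly the ingredient that lets the conclusion of Theorem \ref{TF} (about $\xi_{\GP}$-injectivity) be converted into the $\xi_{\GP}$-projectivity one wants, closing the loop with Theorem \ref{ZM}.
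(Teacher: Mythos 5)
Your proof is correct and follows exactly the route the paper intends: the paper gives no written argument beyond "by Theorem \ref{TF} and \ref{ZM}, we get," and your two inclusions (Theorem \ref{TF} plus the Frobenius equality $\mathcal{P}(\xi_{\GP})=\mathcal{I}(\xi_{\GP})$ for one direction, Theorem \ref{ZM} for the other) are precisely the details being elided. Your explicit remark that $\P\subseteq\GP$, so that both notions apply to the same object $M$, is a worthwhile addition the paper leaves implicit.
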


{\small

}

\begin{thebibliography}{20}
	\bibitem{JS} J. Asadollahi, Sh. Salarian. Gorenstein objects in triangulated categories. Journal of Algebra. 281, 264-286, 2004.

	\bibitem{JS1} J. Asadollahi, Sh. Salarian. Tatecohomology and Gorensteinness for triangulated categories. Journal of Algebra. 299, 480–502, 2006.
	
	\bibitem{AB} M. Auslander, M. Bridger. Stable module theory. American Mathematical Society. Vol. 94, 1969.	
	
	\bibitem{BEL} A. Beligiannis. Relative homological algebra and purity in triangulated categories. Journal of Algebra. 227(1), 268-361, 2000.
	
	\bibitem{BEL1} A. Beligiannis. The homological theory of contravariantly finite subcategories: Auslander-Buchweitz contexts, Gorenstein categories and (co-) stabilization. Communications in Algebra. 28, 4547-4596, 2000.
	
	
	\bibitem{CH} L.W. Christensen. Gorenstein dimensions. Lecture Notes in Mathematics. 1747, Springer-Verlag, Berlin, 2000.
	
	\bibitem{EO1} E.E. Enochs, O.M.G. Jenda. Gorenstein injective and projective modules. Mathematische Zeitschrift. 220, 611-633, 1995.
	
	\bibitem{EO} E.E. Enochs, O.M.G. Jenda. Balanced functors applied to modules.  Journal of Algebra. 92, 303-310, 1985.
	
	\bibitem{EO2} E.E. Enochs, O.M.G. Jenda. Relative Homological Algebra. De Gruyter Expositions in Mathematics. vol. 30, Walter de Gruyter GmbH \& Co. KG, Berlin, 2011.
	
	\bibitem{GZ}N. Gao, P, Zhang. Gorenstein derived categories. Journal of Algebra. 323, 2041-2057, 2010.
	
	\bibitem{He} Z. He. Gorenstein Objects in Extriangulated Categories. arXiv:2011.14552.

	\bibitem{H1} H. Holm. Gorenstein dervied functors. Proceedings of the American Mathematical Society. 132, 1913-1923, 2004. 
	
	\bibitem{H2} H. Holm. Gorenstein homological dimensions. Journal of Pure and Applied Algebra.  189, 167-193, 2004.
	
	\bibitem{JDP} J. Hu, D. Zhang, P. Zhou. Proper classes and Gorensteinness in extriangulated categories.  Journal of Algebra. 551, 23-60, 2020.
	
	\bibitem{JZP} J. Hu, D. Zhang, P. Zhou. Gorenstein homological dimensions for extriangulated categories.  arXiv:1908.00931.
	
	\bibitem{JDTP} J. Hu, D. Zhang, T. Zhao, P. Zhou. Balance of complete cohomology in extriangulated categories. arXiv:2004.13711v1.
	
	
	\bibitem{HY} H. Nakaoka, Y. Palu. Extriangulated categories, Hovey twin cotorsion pairs and model structures. Cahiers de Topologie et Differentielle Cateoriques. Volume LX-2, 117-193, 2019.
	
	\bibitem{WZ} W. Ren, Z. Liu. Gorenstein homological dimensions for triangulated categories. Journal of Algebra. 410, 258-276, 2014.
	
\end{thebibliography}
\end{document}